\numberwithin{equation}{section}
\title{
\normalsize
\textbf{{ANOMALOUS SCALING LAW FOR THE TWO-DIMENSIONAL GAUSSIAN FREE FIELD
 }}}
\author{}
\date{}
\newcommand{\1}{\mathbbm 1}
\newcommand{\Z}{\mathbb Z}
\newcommand{\R}{\mathbb R}
\renewcommand{\P}{\mathbb{P}}
\newcommand{\E}{\mathbb{E}}
\renewcommand{\epsilon}{\varepsilon}
\newcommand{\capacity}{\text{cap}}
\newtheorem{theorem}{Theorem}[section]
\newtheorem{lemma}[theorem]{Lemma}
\newtheorem{proposition}[theorem]{Proposition}
\theoremstyle{definition}
\newcommand{\cref}[1]{\text{\upshape\ref{#1}}}
\newcommand{\K}{\mathcal{K}}
\newcommand{\A}{\mathcal{A}}
\newcommand{\cL}{\mathcal{L}}
\newcommand{\cP}{\mathcal{P}}
\newcommand{\LU}{\text{LocUniq}}
\newcommand{\cC}{\mathcal{C}}
\newcommand{\tC}{D}
\newcommand{\tcC}{\tilde{\mathcal{C}}}
\newcommand{\bC}{C}
\newcommand{\haC}{U}
\newcommand{\hC}{\hat{\mathcal{C}}}
\newcommand{\diam}{\text{diam}}
\newcommand{\ba}{\bar{a}}
\newcommand{\bu}{\bar{u}}
\newcommand{\tB}{\tilde{B}}
\newcommand{\I}{{\cal I}}
\renewcommand{\phi}{\varphi}
\renewcommand{\tilde}{\widetilde}
\renewcommand{\hat}{\widehat}
\renewcommand{\epsilon}{\varepsilon}
\newcommand{\defeq}{\stackrel{\text{def.}}{=}}
\definecolor{Red}{rgb}{1,0,0}
\definecolor{Blue}{rgb}{0,0,1}
\definecolor{Olive}{rgb}{0.41,0.55,0.13}
\definecolor{Yarok}{rgb}{0,0.5,0}
\definecolor{Green}{rgb}{0,1,0}
\definecolor{MGreen}{rgb}{0,0.8,0}
\definecolor{DGreen}{rgb}{0,0.55,0}
\definecolor{Yellow}{rgb}{1,1,0}
\definecolor{Cyan}{rgb}{0,1,1}
\definecolor{Magenta}{rgb}{1,0,1}
\definecolor{Orange}{rgb}{1,.5,0}
\definecolor{Violet}{rgb}{.5,0,.5}
\definecolor{Purple}{rgb}{.75,0,.25}
\definecolor{Brown}{rgb}{.75,.5,.25}
\definecolor{Grey}{rgb}{.7,.7,.7}
\definecolor{Black}{rgb}{0,0,0}
\def\black{\color{Black}}
\begin{document}
\maketitle
\vspace{0.1cm}
\begin{center}
\vspace{-1.9cm}
Pierre-Fran\c cois Rodriguez$^{1,2}$ and Wen Zhang$^1$

\end{center}
\vspace{0.1cm}
\begin{abstract}
\centering
\begin{minipage}{0.80\textwidth}
We consider the Gaussian free field $\varphi$ on $\Z^2$ at large spatial scales $N$ and give sharp bounds on the probability $\theta(a,N)$ that the radius of a finite cluster in the excursion set $\{\varphi \geq a\}$ on the corresponding metric graph is macroscopic. We prove a scaling law for this probability, by which $\theta(a,N)$ transitions from fractional logarithmic decay for near-critical parameters $(a,N)$ to polynomial decay in the off-critical regime. The transition occurs across a certain scaling window determined by a correlation length scale $\xi$, which is such that $\theta(a,N) \sim \theta(0,\xi)(\tfrac{N}{\xi})^{-\tau}$ for typical heights $a$ as $N/\xi$ diverges, with an explicit exponent $\tau$ that we identify in the process. This is in stark contrast with recent results from \cite{goswami_radius_2022,drewitz_arm_2023} in dimension three, where similar observables are shown to follow regular scaling laws, with polynomial decay at and near criticality, and rapid decay in ${N}/\xi$ away from it.
\end{minipage}
\end{abstract}
\thispagestyle{empty}

\vspace{11.5cm}
\begin{flushright}

\noindent\rule{5cm}{0.4pt} \hfill 11 December 2025 \\
\bigskip
\end{flushright}

\begin{multicols}{2}

\noindent$^1$Imperial College London\\
 Department of Mathematics\\
 %180 Queen's Gate\\
 London SW7 2AZ, UK \\
%  \url{p.rodriguez@imperial.ac.uk} 
  \url{kate.zhang23@imperial.ac.uk} 

\columnbreak
\begin{flushright}
\hfill $^2$ Center for Mathematical Sciences\\
\hfill University of Cambridge\\
%\hfill Wilberforce Road\\
\hfill Cambridge CB3 0WB, UK\\
\hfill\url{pfr26@cam.ac.uk}
\end{flushright}
\end{multicols}

\newpage

\section{Introduction}

Conventional wisdom for critical phenomena warrants that for an observable $ \theta(a,N)$ of interest, where $a$ is the control parameter (e.g.~temperature, density,\dots) and $N$ is the `size' of the system, $\theta(a,N)$ is `well-approximated' by the ansatz
\begin{equation}\label{eq:scaling-normal}
\theta(a,N) \approx N^{-\sigma}f (N/\xi),
\end{equation} 
where $\sigma > 0$ are numerical exponents, $f$ is a rapidly decaying (summable) function and $\xi$ is a characteristic scale for the `size' of the system, which depends on $a$ and diverges polynomially as $a$ approaches the critical point. Recently scaling consistent with \eqref{eq:scaling-normal} has been proved at various levels of precision for (truncated) connection probabilities associated to certain percolation models with long-range correlations 
in dimension three, see \cite{goswami_radius_2022,muirhead_percolation_2024-1,prevost_first_2024, GRS24.1}, and refs.~below. A good example to keep in mind are excursion sets of the free field above height $a \in \mathbb{R}$. In this article, we consider the corresponding problem on the cable-graph, and consider the same problem
in dimension two, which is critical, with a suitable cut-off at large distances. We prove that for this model the scaling \eqref{eq:scaling-normal} needs to be replaced by
\begin{equation}\label{eq:scaling-new}
\theta(a,N) \approx (\log  N)^{-\sigma}(N/\xi)^{-\tau},
\end{equation} 
 with $\tau > 0$. In words, \eqref{eq:scaling-new} entails that polynomial decay no longer characterises critical behaviour (which has fractional logarithmic decay), but rather ``off-critical'' one, and accordingly $f(\cdot)$ is no longer rapidly decaying.

We now describe our results more precisely. Let  $(B_t)_{t \geq  0}$ denote a Brownian motion in $\R^2$ with variance $1/2$ at time $1$ \black and $B_{t}^\tau=B_{t \wedge \tau}$, where $\tau$ is an independent mean one exponential variable. For a Borel set $A \subset \R^2$, let 
$\capacity_{\R^2}^{\tau}(A)\equiv \capacity_{\R^2}(A)$ denote the corresponding capacity, defined in one of several possible ways in terms of the variational principle
\begin{equation} \label{eq:cap_time_change_bm}
     \big(\capacity_{\R^2}(A)\big)^{-1} = \inf_{\mu}\int_0^1 \int_0^1 \frac{ 2\black}{\pi} K_0\big( 2 \black |x-y|\big) \,d\mu(x)\,d\mu(y),  
\end{equation}
where the infimum ranges over all probability measures $\mu$ on $A$ and
\begin{equation} \label{eq:K_0}
    K_0(s)\stackrel{\text{def.}}{=}   \int_{0}^\infty \frac{1}{2 t}
    \exp\left\{-\frac{s^2}{4t}-t\right\} \,dt, \quad s > 0,
 \end{equation}
denotes the zeroth-order modified Bessel function, which is proportional to the Green's function of $B^{\tau}_{\cdot}$.

 We consider $\varphi=(\varphi_x)_{x \in \Z^2}$, the centered Gaussian field with law $\P_N$, $N \geq 1$, whose covariance is given by the Green's function $g_N(\cdot,\cdot)$ of the continuous-time simple random walk $X$ on $\Z^2$ with unit jump rate to nearest neighbors and killing rate $N^{-2}$; see Section~\ref{sec:facts} for the precise setup. We write $\E_N[\cdot]$ for the expectation corresponding to $\P_N$. The law $\P_N$ is invariant under translations of $\Z^2$ and by \cite[Theorem 2.1]{rz25a} applied with $h_N=1$,
 \begin{equation}\label{eq:g_N}
 g_N \stackrel{\text{def.}}{=} g_N(0,0) = \E_N[\varphi_0^2]\sim \frac2{\pi} \log N, \text{ as $N \to \infty$},
 \end{equation}
 where $a_N \sim b_N$ means that the ratio tends to one in the limit (the corresponding random walk in \cite{rz25a} has jump rate $\frac23$ and killing rate $1$ hence small adjustments are incurred in various places when applying results from \cite{rz25a} below; this accounts for the difference in the pre-factor on the right of \eqref{eq:g_N} when compared~with \cite[(1.6)]{rz25a}). Note that \eqref{eq:g_N} is also the leading asymptotics in the bulk when killing the random walk on the boundary of a ball of radius $N$; cf.~\cite[Theorem 1.6.6]{lawler_intersections_2013}. The choice of a mass has the virtue of preserving translation invariance, and of dispensing with unwanted boundary effects.
 
 The field $\varphi$ extends to a continuous field on the corresponding metric graph, or cable system.  For $a \in \mathbb{R}$, we write $\mathscr{C}^{\geqslant a}$ for the connected component of the origin in the metric graph excursion set above level $a$. Our main result concerns the quantity
 \begin{equation} \label{eq:connectivity_function}
\begin{split}
 & \theta(a, N) \stackrel{\text{def.}}{=} \P_N\big( \mathscr{C}^{\geqslant a} \cap \partial B_R \neq \emptyset, \, \text{cap}_N( \mathscr{C}^{\geqslant a}) < \infty \big), \quad a \in \R, N \geq 1,
   \end{split}
\end{equation}
which in words is the probability that the cluster $\mathscr{C}^{\geqslant a}$ extends to distance $R$ from the origin, all the while retaining a finite capacity (here $\text{cap}_N(\cdot)$ refers to the capacity associated to the metric graph extension of the underlying random walk $X$). For $a \geq 0$, the requirement $\text{cap}_N( \mathscr{C}^{\geqslant a}) < \infty$ can be shown to hold almost surely, and can therefore safely be omitted from the definition of $\theta(a,N)$ in \eqref{eq:connectivity_function}. For $a<0$, it acts as a natural `truncation' in the problem, i.e.~it forces the quantity $\theta(a, N)$ to be small.

The asymptotic behaviour of \eqref{eq:connectivity_function} at large $N$ for $a=0$ was determined in \cite{rz25a} up to multiplicative constants; see also \cite{jego_crossing_2023} for related results in the continuum. As a consequence of \cite[Theorem 1.1]{rz25a}, one knows that
\begin{equation}
\label{eq:one-armcrit}
 c\,  (\log N)^{-1/2} \leq \theta(a,N)\big\vert_{a=0} \leq C\,  (\log N)^{-1/2},
\end{equation}
Our main result determines the scaling of the quantity $\theta(a, N)$, giving precise meaning to \eqref{eq:scaling-new}.

\begin{theorem}\label{T:main}
    For $a\in(-1,1)$ and $N\geq 1$, let
    \begin{equation} \label{def:correlation_length}
    \xi=\xi(a,N)\defeq Ne^{-a^2g_N}(\geq 1),
    \end{equation}
    and, with $\capacity_{\R^2}^{\tau}(\cdot)$ as in \eqref{eq:cap_time_change_bm}, and viewing $[0,1]$ as $[0,1]\times\{0\} \subset \R^2$,
      \begin{equation}\label{eq:tau}
    \tau \defeq \tfrac{1}{2} \textnormal{cap}_{\R^2} \big([0,1] \big).
    \end{equation}   
    Then with $\bar{a}= \sqrt{g_N} a$ (cf.~\eqref{eq:g_N}), we have that
    \begin{equation} \label{eq:thm-asymp}
        \lim_{({N}/{\xi})\to\infty} \frac{\log\left(\theta(\bar{a},N)/\theta\big(0,\xi)\right)}{\log(N/\xi)}= - \tau,
    \end{equation}
    where the limit in is understood over any sequence of pairs $(a,N)$ (possibly dependent) such that $a^2 \log N \to \infty$. 
\end{theorem}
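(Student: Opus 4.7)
Under the standing assumption $a^2\log N\to\infty$, the critical bound \eqref{eq:one-armcrit} applied at scale $\xi$ yields $\log\theta(0,\xi)=O(\log\log\xi)$, which is $o(\log(N/\xi))$ since $\log(N/\xi)=a^2 g_N\to\infty$ by \eqref{def:correlation_length} and \eqref{eq:g_N}. The statement \eqref{eq:thm-asymp} therefore reduces to
\begin{equation*}
\log\theta(\bar a, N) = -\tau\log(N/\xi)(1+o(1)),
\end{equation*}
and the plan is to establish matching lower and upper bounds on this log-quantity.

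\emph{Lower bound via a thin-tube event.} Let $L_N:=\{(k,0):0\leq k\leq N\}\subset\Z^2$. On $\{\inf_{L_N}\varphi\geq\bar a\}$, the cluster $\mathscr C^{\geq\bar a}_0$ contains $L_N$ and reaches $\partial B_N$. Standard Gaussian Cameron--Martin shift arguments (shifting by $\bar a$ times the equilibrium potential of $L_N$) yield
\begin{equation*}
\P_N\Big(\inf_{L_N}\varphi\geq\bar a\Big) \,\geq\, \exp\!\Big({-}\tfrac{\bar a^2}{2}\,\capacity_N(L_N)(1+o(1))\Big).
\end{equation*}
The invariance principle for the killed random walk $X$, rescaled by $N^{-1}$, gives weak convergence to the Brownian motion $B^\tau$ of \eqref{eq:cap_time_change_bm}, from which one deduces $\capacity_N(L_N)\to\capacity_{\R^2}([0,1])=2\tau$. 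Combined with $\bar a^2=a^2 g_N=\log(N/\xi)(1+o(1))$, this produces $\log\theta(\bar a,N)\geq-\tau\log(N/\xi)(1+o(1))$. For $\bar a<0$, where the capacity truncation in \eqref{eq:connectivity_function} becomes the binding constraint, a small modification of the thin-tube event (e.g.\ insisting on a return of the cluster to a bounded neighbourhood of $L_N$) is needed.

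\emph{Upper bound via isomorphism.} Here I would appeal to Lupu's isomorphism theorem on the cable graph, which encodes level-set clusters of $\varphi$ in terms of the traces of suitable random walks whose law is conjugate to the Gaussian field. Through this correspondence, the probability in \eqref{eq:connectivity_function} can be bounded above by an expression involving the (random) cluster capacity $\capacity_N(\mathscr C^{\geq\bar a}_0)$ and a Gaussian cost of the form $\exp\!\big({-}\tfrac{\bar a^2}{2}\capacity_N(\mathscr C^{\geq\bar a}_0)(1-o(1))\big)$. Any cluster containing $0$ and reaching $\partial B_N$ must contain a nearest-neighbour path of Euclidean length $\geq N$, whose discrete capacity is bounded below by $\capacity_N(L_N)(1-o(1))=2\tau-o(1)$, since among paths between fixed endpoints in $\R^2$ with mass $1$, the straight segment minimises length and hence capacity in the relevant regime. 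Combining yields the matching upper bound $\log\theta(\bar a,N)\leq-\tau\log(N/\xi)(1+o(1))$.

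\emph{Main obstacle.} The upper bound is the crux. A naive union bound over cluster geometries (e.g.\ all paths from $0$ to $\partial B_N$) would accumulate an exponential combinatorial entropy that would overwhelm the polynomial exponent $\tau$. The isomorphism is used precisely to package this combinatorial freedom into a single capacity statistic, matching the lower-bound mechanism. Executing this requires uniform quantitative control on the convergence $\capacity_N\to\capacity_{\R^2}$ across the scaling window $a^2\log N\to\infty$, careful identification of minimising geometries (the radial segment) among the possible cluster supports, and, for $\bar a<0$, a dual analysis handling the truncation $\capacity_N(\mathscr C^{\geq\bar a})<\infty$ as the binding rare event.
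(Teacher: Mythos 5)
Your plan has three genuine gaps, one of which already undermines the opening reduction.

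\textbf{The reduction to $\log\theta(\bar{a},N)$ alone is false in a large part of the allowed regime.} You assert that $\log\theta(0,\xi)=O(\log\log\xi)=o(\log(N/\xi))$. But $\log(N/\xi)=a^2 g_N\asymp a^2\log N$, while $\log\log\xi\asymp\log\log N$ whenever $a^2g_N\le \tfrac12\log N$, say. If $a^2\log N\to\infty$ only slowly (e.g.\ $a^2\log N\asymp\log\log N$), then $\log\theta(0,\xi)\asymp -\log\log N\asymp\log(N/\xi)$, so the ratio $\log\theta(0,\xi)/\log(N/\xi)$ does not vanish. In other words, the critical factor $\theta(0,\xi)$ is of \emph{the same logarithmic order} as the main term in the slowly-divergent window; the theorem with the $\theta(0,\xi)$ factor removed is a genuinely different (and false) statement. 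This is precisely why the paper builds the argument so as to exhibit $\theta(0,\xi)$ multiplicatively: the upper bound conditions on the field on $\partial B_{L/2}$ and uses the near-critical Lemma~\ref{lem:near_critical} to decouple the critical cost at scale $\xi$, while the lower bound in \eqref{eq:lb_crit_cost} seeds the cluster with a capacity condition at scale $\xi$.

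\textbf{The thin-tube lower bound is too costly.} Shifting the Gaussian measure by $\bar{a}$ times the equilibrium potential of $L_N$ costs $\exp\{-\tfrac{\bar a^2}{2}\capacity_N(L_N)(1+o(1))\}$, as you say; but the residual factor is then $\P_N(\inf_{\tilde L_N}\varphi\ge 0)$, a ``ballot-type'' event for the (log-correlated) field along a segment of length $N$, which decays at least polynomially in $N$. Since the target decay is $(N/\xi)^{-\tau}=\exp\{-\tau a^2 g_N\}$ and $a^2 g_N=o(\log N)$ as soon as $a\to 0$, this residual term overwhelms your target rate across most of the window — the naive tube event simply fails. The paper avoids this entirely: after a Cameron--Martin shift it engineers the connection via interlacement trajectories, using the local-uniqueness Theorem~\ref{thm:loc_uniq}, so that the residual probability is order one (see \eqref{eq:li_connect}).

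\textbf{The upper bound is not an argument.} The isomorphism does not hand you a bound of the shape $\exp\{-\tfrac{\bar a^2}{2}\capacity_N(\mathscr C^{\ge\bar a})(1-o(1))\}$ for the crossing probability; what it does give is representations such as \eqref{eqcouplingintergff}, and the paper uses these only for the localized Lemmas~\ref{lem:near_critical} and~\ref{lem:apriori}. Controlling the entropy of cluster geometries (which you correctly flag as the crux) is done not by ``packaging into a single capacity statistic'' but by the anchoring at scale $\xi$ combined with the coarse-graining Proposition~\ref{prop:coarse_graining} and a three-term decomposition balancing the $\epsilon$-bad boxes, $\eta^x$-shift deviation and $\psi^x$-local crossings; the precise constant then emerges from the porous-tube capacity estimate \eqref{A:capacity}/Proposition~\ref{prop:tube}, not from a symmetrization argument. (Your ``straight segment minimises length and hence capacity'' is not a valid implication — capacity is not monotone in length — and the extremality of the segment must be established quantitatively, uniformly in $N$, which is the content of Section~\ref{sec:tube}.)

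What is correct and shared with the paper: the identification $2\tau=\capacity_{\R^2}([0,1])$ via the invariance principle for the killed walk, and the heuristic that the radial segment is the extremal geometry. But the paper's execution differs at every step precisely to handle the regime where $\theta(0,\xi)$ is not negligible and where $a$ is small.
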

Note that the limit in \eqref{eq:thm-asymp} is non-zero since line segments are non-polar in dimension two. Theorem~\ref{T:main} will be a direct consequence of Theorem~\ref{prop:upper_bound} and Theorem~\ref{prop:lower_bound} below, which deal separately with upper and lower bounds for $\theta(\bar{a},N)$. In particular, \eqref{eq:thm-asymp} implies in combination with \eqref{eq:one-armcrit} that  
\begin{equation}
\label{eq:easy-asymp}
\theta (\bar{a},N) = N^{- \tfrac2\pi \tau a^2 + o(1)}, \text{ as } N \to \infty
\end{equation}
(and much more is in fact true). The (easier) asymptotics \eqref{eq:easy-asymp} as $N\to \infty$ constitute  a two-dimensional analogue of the main result of \cite{goswami_radius_2022} for the Gaussian free field on $\Z^3$, and they are consistent with extensions of this result from \cite{muirhead_percolation_2024-1}.

\bigskip
We now comment on the proof of Theorem~\ref{T:main}, starting with the upper bounds on $\theta$. The transition from fractional logarithmic to polynomial decay is obtained by combining a conditioning argument at scale $\xi$, designed to exhibit the critical cost $\theta\big(0,\xi)$, and a coarse-graining procedure to estimate the cost of the remaining connection. The argument involves three scales,
\begin{equation}\label{eq:scales}
\xi \ll L \ll ML
\end{equation}
which all need to be carefully chosen, in a manner depending quantitatively on $a$ and $N$ (see \eqref{eq:ML_choice_ub} below). The scale $L \gg \xi$ essentially ensures that the critical cost $\theta(\ba,\xi)$ can be decoupled. It is a somewhat delicate matter to actually exhibit this decoupling multiplicatively, since careless additive errors which usually show up can easily become too large. We refer to Lemma~\ref{lem:near_critical}, which eventually yields that $\xi $ is maximal such that $\theta(\ba,\xi) \lesssim \theta(0,N) $, thus establishing $\xi$ in \eqref{def:correlation_length} as relevant correlation length in the problem. 
The coarse-graining is used to deal with the rest of the connection, beyond scale $L$, i.e.~the event $\{\partial B_L \stackrel{\geq \ba}{\longleftrightarrow} \partial B_N \}$. It builds on the recent techniques of \cite{goswami_radius_2022,prevost_first_2024} introduced for similar purposes in higher dimension. The big difference here however, is that we seek to prove polynomial rather than (stretched) exponential decay (in $N$).

In fact the coarse-graining involves only a few boxes -- possibly as few as $O(1)$ when $a^2 \log N$ is a large constant -- and the polynomial decay arises from the interplay between three terms (rather than the usual two), i.e.~for $a>0$ we obtain a bound of the form (with $\mathcal{F}$ denoting conditioning on the field on the boundary of $B_L$),
\begin{equation}\label{eq:3terms}
 \P_N\big(
    \partial B_{L}\stackrel{\geq \ba}{\leftrightarrow} \partial B_N
    \,\big|\,\mathcal{F} \big) \leq a_1 + a_2 +a_3;
\end{equation}
here $a_1$ is a ``local'' term that exhibits the cost of many independent crossing events at scale $L$; it is bounded by leveraging a suitable a-priori estimate on the crossing probability, which is proved separately, and needs to be sufficiently quantitative. The terms $a_2$ and $a_3$ are both deviation events for harmonic averages, of the type exhibited by Sznitman in \cite{sznitman_disconnection_2015} in the context of disconnection problems. Here we seek suitable two-dimensional analogues of these controls, which are no longer of ``large deviation'' type; cf.~the behaviour of disconnection problems in two \cite{lsw01} and in higher \cite{zbMATH07704060} dimensions. The reason for the presence of two terms $a_2$ and $a_3$ instead of just one is because there is a harmonic shift felt from conditioning on the ``anchoring'' box where the critical cost is exhibited, and another one stemming from decoupling the various boxes present in the coarse graining, which is where the correct choice of $M$ comes into play. The three forces $a_i$ $1 \leq i \leq 3$ need to be carefully balanced to arrive at  the desired upper bounds.

The lower bounds for $\theta$ roughly follow the heuristic picture from \cite{drewitz_critical_2023} but now with precisely the scales appearing in \eqref{eq:scales}. The bounds rely on certain local uniqueness estimates in two dimensions, which are of independent interest.

\medskip

We now briefly describe how this article is organized. In Section~\ref{sec:facts} we collect various useful facts. In Section~\ref{sec:nearcrit} we prepare the ground for the upper bound by proving  a near-critical estimate and an a-priori bound on connection probabilities. In Section~\ref{sec:ub} we prove the upper bounds, conditionally on various results proved over the course of the next three sections. Finally in Section~\ref{sec:lb} we prove the lower bounds.

\medskip
In the sequel, we write $c,c^\prime,\dots$ for strictly positive constants which may change from line to line and write $c_i$ for strictly positive constants which will remain fixed. These constants have no dependence unless we explicitly say so.

\medskip

\textit{Note:} during the final stages of this project we learned that Yijie Bi and collaborators were investigating related questions.

\section{Useful facts}\label{sec:facts}

We gather here a few preliminary results concerning random walks, potential theory, and isomorphism theorems, which are used throughout this article. In particular, we introduce the killed Green's function $g_N^U$ with killing on $U\subset \Z^2$, see \eqref{eq:gNK}, which extends $g_N=g_N^\emptyset$, review its large distance and large-$N$ behaviour, see \eqref{eq:g2-asymp-unif} and gather key estimates as to the effect of killing on $U$, see Lemma~\ref{L:RWestimates}. We then briefly discuss extensions of these results to cable systems, introduce the free field $\P_N^U$ with killing on $U$, and discuss the corresponding interlacement $\bar{\P}_N^U$, which comprises bounded excursions exiting through the ``ghost'' vertex $\Delta$, or through $\partial U$ when $U \neq \emptyset$. We conclude with an isomorphism theorem/random walk representation involving these objects that plays a prominent role in the sequel.

\medskip
We start with some facts concerning the random walk $X_{\cdot}$ introduced above \eqref{eq:g_N}. Formally, this is the Markov chain on the weighted graph $(\Z^2, \lambda, \kappa)$ with generator $Lf(x)=\sum_y \lambda_{x,y} (f(y)-f(x))-\kappa_x f(x)$, where $\lambda_{x,y}=\tfrac14$ for neighbors $x,y \in\Z^2$, and $\kappa_x=N^{-2}$, where $\kappa_x$ is interpreted as jump rate to the cemetery state $\Delta$, which is absorbing, i.e.~$X_t=\Delta$ for all $t \geq H_{\Delta}$, where $H_K \coloneqq \inf\{ t \geq 0: X_t \in K \}$. We write $\lambda_x=\kappa_x+\sum_{y}\lambda_{x,y}=1+N^{-2}$ in the sequel.
The canonical law of $X$ with starting point $X_0=x \in \Z^2$ is denoted by $P_x$. It depends implicitly on $N$.

 For $K\subset \mathbb{Z}^2$, we introduce the equilibrium  measure of $K$, $e_{K,N}(x)= \lambda_x P_x(\tilde{H}_K=  \infty)$ for $x\in K$
where $\tilde{H}_K\coloneqq \inf\{t>0:X_t\in K 
\text{ and there exists } s\in (0,t) \text{ with } X_s\neq X_0\}$ is
the hitting time of $K$. The total mass of the equilibrium measure of $K$ is the capacity of $K$, denoted by $\capacity_N(K)$.
By a version of \cite[(1.57)]{sznitman_topics_2012} on infinite graphs, one has the last-exit decomposition, valid for finite $K\subset \mathbb{Z}^2$,
\begin{equation} \label{eq:last_exit}
    P_x(H_K< \infty)=\sum_{y\in \mathbb{Z}^2}g_N(x,y)e_{K,N}(y), \quad x \in \Z^2,
\end{equation}
where $H_K\coloneqq \inf\{t\geq0:X_t\in K \}$ is
the entrance time of $K$ and $g_N$ denotes the Green's function of $X$. By \cite[(3.9)]{rz25a}, one has that for all $0 \leq r \leq N$, denoting by $B(x,r)=\{z \in \Z^2: |z-x| \leq r\}$ the closed Euclidean ball of radius $r \geq 0$ and abbreviating $B_r=B(0,r)$,
\begin{equation}\label{eq:capB}
c \log \big((N/r)\vee 2\big)^{-1} \leq \capacity_N(B_r) \leq c^\prime \log \big((N/r)\vee 2\big)^{-1}.
\end{equation}
It follows from a slight adaptation of \cite[Proposition 2.5]{rz25a} applied to the case $h_N=1$ that $g_N$ satisfies the following: for all $\varepsilon \in (0,1)$ $\alpha\geq 1$ and $ N \geq C(\varepsilon, \alpha) $,
    \begin{equation}\label{eq:g2-asymp-unif}
%\lim_R
 \sup_{ |x-y|  < \alpha N }\left| \frac{g_N(x,y)}{\bar g_N(x,y)}-1\right| < \varepsilon,
  %=0,
\end{equation}
where $|\cdot|$ denotes the Euclidean distance and
$$
\bar g_N(x,y) \defeq
\frac{2}{\pi}  K_0\left({2}\,\frac{|x-y| \vee 1}{N}\right), \quad \text{for } x,y, \in \Z^2,
$$
and $K_0(\cdot)$ is given by \eqref{eq:K_0}. The discrepancy between $\bar g_N$ above and its counterpart in \cite[(2.21)]{rz25a} stems from the application of the local limit theorem in \cite[(2.23)]{rz25a}, for which the jump rate $r=1$ in the present context, instead of $r=2/3$ as in \cite{rz25a}. We will frequently use (see \cite[Lemma B.1]{rz25a} for a proof) that
  \begin{equation} \label{eq:bessel_ub}
        \log(1/t) \leq K_0(t)\leq (\log(1/t)\vee 0)+C, \quad t >0,
    \end{equation}
which implies, in particular, that $K_0(t)\sim\log({1}/{t})$ as $t\to 0^+$. The following result will prove useful in several places.
Recall that $K_0(\cdot)$ is a decreasing function.
\begin{lemma}\label{lem:bessel_cont}
    For all $\epsilon\in(0,1)$ There exists $\Cl[c]{k0}(\epsilon)>0$ such that for all $\zeta$ satisfying $0<\zeta\leq \Cr{k0}t$,
    we have that 
    \begin{equation} \label{eq:bessel_cont}
    \inf_{t\in(0,10]}\frac{K_0(t+\zeta)}{K_0(t)}\geq 1-\epsilon \quad \text{and}\quad \sup_{t\in(0,10]}\frac{K_0(t-\zeta)}{K_0(t)}\leq 1+\epsilon .
    \end{equation}
\end{lemma}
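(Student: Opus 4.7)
The plan is to split the range $t \in (0, 10]$ into two regimes. On the small-$t$ side, $K_0(t)$ diverges and I would control the ratio using the asymptotic equivalence $K_0(t) \sim \log(1/t)$ captured by \eqref{eq:bessel_ub}. On the bounded side, $K_0$ is smooth and bounded below away from zero, so I would apply a Lipschitz/mean value estimate. The main subtlety will lie in the small-$t$ regime, where the \emph{absolute} change of $K_0$ under a perturbation of size $\zeta$ is not small, but its \emph{ratio} to $K_0(t)$ is; the multiplicative nature of the smallness hypothesis $\zeta \leq \bar{K}t$ is precisely what matches the logarithmic divergence.

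For $t \in (0, t_0]$ with a threshold $t_0 = t_0(\varepsilon) \in (0, 1/10)$ to be fixed, and for $\zeta \leq \bar{K} t$ with $\bar{K} \leq 1/2$, one has $t \pm \zeta \in [t/2, 3t/2] \subset (0,1)$. The idea is to apply \eqref{eq:bessel_ub} to both numerator and denominator and to write $\log(1/(t \pm \zeta)) = \log(1/t) - \log(1 \pm \zeta/t)$, which yields
\[
\frac{K_0(t+\zeta)}{K_0(t)} \geq \frac{\log(1/t) - \log(1+\bar{K})}{\log(1/t) + C_0}, \qquad \frac{K_0(t-\zeta)}{K_0(t)} \leq \frac{\log(1/t) + \log\frac{1}{1-\bar{K}} + C_0}{\log(1/t)},
\]
where $C_0$ is the additive constant in \eqref{eq:bessel_ub}. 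Both right-hand sides tend to $1$ as $t \to 0$ for any fixed $\bar{K}$, so one can choose $t_0 = t_0(\varepsilon, \bar{K})$ such that both ratios lie in $[1-\varepsilon, 1+\varepsilon]$ for every $t \leq t_0$.

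For the complementary range $t \in [t_0, 10]$, the function $K_0$ is $C^\infty$ and strictly positive on the compact interval $[t_0/2, 15]$, hence Lipschitz there with some constant $L = L(t_0)$ and bounded below by $K_0(15) > 0$. Imposing $\bar{K} \leq t_0/20$ guarantees $t \pm \zeta \in [t_0/2, 15]$ whenever $\zeta \leq \bar{K} t$, and one would conclude
\[
\left| \frac{K_0(t \pm \zeta)}{K_0(t)} - 1 \right| \leq \frac{L \zeta}{K_0(15)} \leq \frac{10 L \bar{K}}{K_0(15)},
\]
which falls below $\varepsilon$ after shrinking $\bar{K}$ if necessary. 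Taking the final constant to be the minimum of the two values of $\bar{K}$ obtained in the two regimes would yield both inequalities in \eqref{eq:bessel_cont}. The dependencies must be chained carefully---first $\bar{K}$ compatible with the small-$t$ bound, then $t_0 = t_0(\varepsilon, \bar{K})$, and finally $\bar{K} \leq t_0/20$ re-imposed---but this chaining does not collapse the positivity of the final constant, so no fixed-point subtlety arises.
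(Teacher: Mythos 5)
Your proof is correct, but it takes a genuinely different route from the paper's. You split the interval $(0,10]$ into a small-$t$ regime (where you exploit the logarithmic divergence $K_0(t) \sim \log(1/t)$ from \eqref{eq:bessel_ub}) and a complementary bounded regime (where you apply a Lipschitz estimate combined with a positive lower bound on $K_0$), then chain the choices of $\bar K$ and $t_0$. The paper instead gives a single unified argument valid across all of $(0,10]$ at once: it uses the derivative identity $K_0'(x) = -K_1(x)$ and the pointwise bound $K_1(x) \le c\, x^{-1}$, together with $K_0(t) \ge K_0(10) > 0$ on $(0,10]$, to write
\[
\frac{K_0(t)-K_0(t+\zeta)}{K_0(t)} \le c\int_t^{t+\zeta} K_1(s)\,ds \le c\int_t^{t+\zeta} s^{-1}\,ds = c\log\!\Big(1+\frac{\zeta}{t}\Big),
\]
which is $\le \epsilon$ once $\zeta \le \Cr{k0}(\epsilon)\,t$ with $\Cr{k0}$ small (and symmetrically for $K_0(t-\zeta)$, using $\log\frac{1}{1-x}\le 2x$ on $(0,1/2]$). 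The paper's route is shorter and avoids the regime split entirely, since $K_1(s)\le c\,s^{-1}$ holds for all $s>0$, not only near the origin; your route buys independence from the identity $K_0'=-K_1$ and from bounds on $K_1$, at the cost of the dichotomy and the (carefully handled, non-circular) chaining of $\bar K$ and $t_0$. Both are valid.
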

\begin{proof}
    We first recall some properties of modified Bessel function of the second kind. Recall that, (see, for instance, \cite[(10.29.3), (10.30.2) and (10.40.2)]{NIST:DLMF})
    \begin{equation}\label{eq:bessel_prop}
        \frac{\mathrm{d}}{\mathrm{d}x}K_0(x)=-K_1(x) \quad \text{ and }\quad K_1(x)\leq cx^{-1}, \qquad \text{for all }x>0.
    \end{equation}
    It then follows from \eqref{eq:bessel_prop} and the fact that $K_0(t)\geq K_0(10)\geq c$ that,
    \begin{align*}
    \begin{split}
    \frac{K_0(t)-K_0(t+\zeta)}{K_0(t)}
    &\leq c \int_{t+\zeta}^t K_0'(s)\,\mathrm{d}s
    = c\int_t^{t+\zeta} K_1(s)\,\mathrm{d}s 
    \leq c \int_t^{t+\zeta} s^{-1}\,\mathrm{d}s
    = c\log\Big(1+\frac{\zeta}{t}\Big)\leq \epsilon,
    \end{split}
    \end{align*}
    where we used the fact that $\zeta\leq \Cr{k0}(\epsilon)t$ with $\Cr{k0}$ small enough for the last bound.
    For the second bound in \eqref{eq:bessel_cont}, we have similarly that by \eqref{eq:bessel_prop},
    \begin{align*}
    \begin{split}
    \frac{K_0(t-\zeta)-K_0(t)}{K_0(t)}
    \leq c \int_t^{t-\zeta} K_0'(s)\,\mathrm{d}s
    \le c\int_{t-\zeta}^{t} s^{-1}\,\mathrm{d}s
    = c\log\Big(\frac{1}{1-\zeta/t}\Big).
    \end{split}
    \end{align*}
    To conclude, using the inequality $\log(\frac{1}{1-x})\leq 2x$ for all $x\in(0,1/2]$ we have that $\frac{K_0(t-\zeta)-K_0(t)}{K_0(t)}\leq c{\zeta}/{t}\leq \epsilon$ upon taking  $\zeta\leq \Cr{k0}(\epsilon)t$ with $\Cr{k0}$ small enough.
\end{proof}

%. The divergence reflects the recurrent nature of the walk. 

 We now consider killing on a set $U \subset \Z^2$ with finite boundary, which will be sufficient for our purposes. We denote the Green's function killed upon hitting $U$ by
    \begin{equation} \label{eq:gNK}
      g_N^U(x,y) \defeq      \frac{1}{\lambda_{x}}E_{x}\Big[\int_0^{H_U} \1_{\{X_t=y\}} \,dt\Big], \quad x,y \in \Z^2,
    \end{equation}
    so that $g_N= g_N^{\emptyset}$. Note that $ g_N^U(x,y)$ is no longer translation invariant. We write $\capacity_N^U(K)=\sum_{x\in K} P_x (\widetilde{H}_K > H_U)$ for the corresponding capacity, i.e., the capacity associated to the process $X_{\cdot \wedge H_U}$. The following results are essentially obtained by specializing \cite[(5.2) and (5.4)]{rz25a} to the case $h_N=1$.

 \begin{lemma}\label{L:RWestimates}
   For all $N\geq 1$ and $2 \leq r \leq N$, $x_0 \in \Z^2$  the following hold:
   \begin{enumerate}
   \item[(i)] if $U= \Z^2 \setminus B(x_0,r)$, one has 
          \begin{equation} \label{eq:killed_green_smallh} 
          c   \log\left(\frac{r}{(|y_1-y_2|) \wedge \tfrac r2 }\right)            \leq g_N^U(x,y) 
            \leq 
            c^\prime              \log\left(\frac{r}{(|x-y|)  \wedge \tfrac r2 }\right), \quad x,y,\in B(x_0,r/2); %1_{r \geq 2 h_N}.
          \end{equation}

 \item[(ii)] if $r \leq r^\prime \leq N$ and $U\subseteq \Z^2 \setminus B(x_0,2r^\prime)$,
        \begin{equation} \label{eq:cap-killed}
       {c}{\log({2r^\prime}/{r})}^{-1}
        \leq 
        \capacity_N^U\big(B(x_0,r)\big) \leq 
       {c^\prime}{\log({2r^\prime}/{r})}^{-1}.        \end{equation}
       \end{enumerate}
\end{lemma}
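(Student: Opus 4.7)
The plan is to reduce both statements to the bounds established in \cite[(5.2), (5.4)]{rz25a}, specialized to the case $h_N=1$. Only minor adjustments are needed to account for the different random walk parameters (unit jump rate and killing rate $N^{-2}$ here vs.~jump rate $2/3$ and killing rate $1$ there), which affect only the leading pre-factor in the asymptotic expansion of $g_N$ and the overall normalization of capacity; these have already been incorporated into $\bar g_N$ above and are tracked through \eqref{eq:g2-asymp-unif}.

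For (i), I would start from the strong Markov decomposition
\begin{equation*}
g_N^U(x,y) = g_N(x,y) - E_x\bigl[g_N(X_{H_U}, y)\,\1_{H_U < \infty}\bigr], \quad x,y \in B(x_0, r/2).
\end{equation*}
The uniform asymptotic \eqref{eq:g2-asymp-unif} combined with \eqref{eq:bessel_ub} yields $g_N(x,y) = \tfrac{2}{\pi}\log(N/(|x-y| \vee 1)) + O(1)$, while on $\{H_U < \infty\}$ the point $X_{H_U}$ lies in $\partial B(x_0, r) \cup \{\Delta\}$, at distance of order $r$ from $y$, so $g_N(X_{H_U}, y) = \tfrac{2}{\pi}\log(N/r) + O(1)$ uniformly in $y \in B(x_0,r/2)$. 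Finally the exit probability $P_x(H_U < \infty)$ is bounded away from $0$ and $1$ uniformly for $x \in B(x_0, r/2)$, which follows from \eqref{eq:last_exit} and \eqref{eq:capB}. Combining these three inputs and taking the difference produces the claimed two-sided bound $g_N^U(x,y) \asymp \log(r/(|x-y| \wedge r/2))$, the $(r/2)$-truncation serving precisely to absorb the residual additive $O(1)$ errors into the multiplicative constants.

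For (ii), I would use the dual characterization of capacity for the walk killed on $U$: writing $K = B(x_0, r)$, the capacity $\capacity_N^U(K)$ is comparable to $(\sup_{x \in K}\sum_{y \in K} g_N^U(x,y)\mu(y))^{-1}$ taken over probability measures $\mu$ on $K$, via the analogue of the variational principle used to derive \eqref{eq:capB}. Since $U \subseteq \Z^2 \setminus B(x_0, 2r')$, any trajectory started in $K$ must travel at least to distance $r'$ before being killed; an application of the strong Markov property at the exit of $B(x_0,r')$ combined with (i) (with $r'$ in place of $r$) gives $g_N^U(x,y) \asymp \log(2r'/(|x-y| \vee 1))$ uniformly in $x,y \in K$. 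Testing the variational principle against the uniform measure on $K$ for the upper bound and against the equilibrium measure for the lower bound produces matching bounds of order $\log(2r'/r)^{-1}$, as claimed.

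The main obstacle is the careful bookkeeping of the additive $O(1)$ errors in \eqref{eq:g2-asymp-unif} and \eqref{eq:bessel_ub}, which must be absorbed cleanly into the multiplicative constants via the $\wedge(r/2)$ truncation in (i) and via the factor $2$ in $2r'$ in (ii). The only slightly delicate regime is when $r$ (resp.~$r'$) approaches $N$, so that $\log(N/r) = O(1)$, but this is already handled in the corresponding arguments of \cite{rz25a} and carries over without modification.
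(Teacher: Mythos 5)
Your opening move — delegating both estimates to \cite[(5.2), (5.4)]{rz25a} with $h_N=1$ — is exactly what the paper does; the lemma has no further proof there. The expanded sketch you append, however, contains a genuine error in the crucial step of (i). With the strong Markov decomposition $g_N^U(x,y)=g_N(x,y)-E_x[g_N(X_{H_U},y)\,\1\{H_U<\infty\}]$ and the asymptotics you quote, one gets, writing $p=P_x(H_U<\infty)$,
\begin{equation*}
g_N^U(x,y)=\tfrac{2}{\pi}\Big[(1-p)\log\big(N/r\big)+\log\big(r/(|x-y|\vee 1)\big)\Big]+O(1),
\end{equation*}
and the extra term $(1-p)\log(N/r)$ does not automatically disappear. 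Your assertion that $p$ is ``bounded away from $0$ and $1$'' is both false and insufficient: because the killing rate is $N^{-2}$ and the typical exit time from $B(x_0,r)$ is $O(r^2)$, one has $1-p=P_x(H_U=\infty)\asymp (r/N)^2$, which tends to $0$ (not to a constant in $(0,1)$) whenever $r\ll N$; conversely, if $1-p$ were genuinely of order $1$, then $(1-p)\log(N/r)$ would be unbounded as $r/N\to 0$ and the claimed upper bound in \eqref{eq:killed_green_smallh} would fail. What saves the argument is the quantitative estimate $1-p\lesssim (r/N)^2$ combined with the elementary fact that $t^2\log(1/t)$ is bounded on $(0,1]$, which is the content that needs to be invoked in place of your ``bounded away from $0$ and $1$'' and is not supplied by the references you give (\eqref{eq:last_exit} applies to finite $K$, whereas $U=B(x_0,r)^{c}$ is infinite).

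A smaller but related issue affects your sketch for (ii): the hypothesis $U\subseteq\Z^2\setminus B(x_0,2r')$ only yields a one-sided comparison $g_N^U\geq g_N^{\,\Z^2\setminus B(x_0,2r')}$ (less killing means a larger Green's function), so the two-sided $g_N^U\asymp \log(2r'/(|x-y|\vee 1))$ you claim is not forced by (i) and requires a separate argument on the complementary side. Since the authors defer entirely to \cite{rz25a}, you could have stopped after the first paragraph; the expansion introduces errors that you would need to repair before it could stand as a proof.
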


We now come to extensions to the cable setup.
One naturally associates to $\Z^2$ a cable system $\tilde{\Z}^2$, which is formally attached to the weighted graph $(\Z^2, \lambda, \kappa)$ introduced above; see \cite[Section 2]{drewitz_cluster_2022} for details. The metric space $\tilde{\Z}^2$ is obtained by replacing each edge between neighbouring vertices of $\Z^2$ by a a line segment of unit length, glued to the vertices at its endpoints, and adding a half open interval glued to each vertex of length $N^2$ (intuitively one can think of these intervals to be all glued together at the point $\Delta$, but it is more convenient not to perform this wiring). The random walk $X_{\cdot}$ extends to a diffusion on $\tilde{\Z}^2$, which is killed when reaching the endpoint of a half-open interval or escaping all finite sets, though he latter has vanishing probability within the current setup, and can thus be disregarded. With a slight (but in practice harmless) abuse of notation, we use $X_{\cdot}$ to denote this diffusion and $P_x$, $x \in \tilde{\Z}^2$, its law when $X_0=x$. The Green's function $g_N^K(\cdot,\cdot)$ extends to $\tilde{\Z}^2\times \tilde{\Z}^2$, and the above estimates, in particular, \eqref{eq:g2-asymp-unif} and those of Lemma~\ref{L:RWestimates}, admit obvious extensions, with $U$ now being allowed a closed, connected subset of $\tilde{\Z}^2$ with finite boundary $\partial U$; this will be sufficient for our purposes.

For $U$ as in the previous paragraph, we write $\P_N^U$ for the law of the centered Gaussian free field $\varphi$ on $\tilde{\Z}^2$ with covariance $g_N^U(\cdot,\cdot)$, so $\varphi_x=0$ for all $x \in U$. Hence $\P_N=\P_N^\emptyset$ refers to the probability measure introduced above \eqref{eq:g_N}. For $K\subset \tilde \Z^2$ and $x\in \tilde \Z^2$, we introduce the almost sure decomposition
\begin{equation}\label{eq:field_decomp}
\eta^K_x=E_x\big[\varphi_{X_{H_K}}\1\{H_K<\infty\}\big] \quad\text{and}\quad \psi_x^K=\varphi_x-\eta^K_x, \quad x \in \tilde{\Z}^2.
\end{equation}
The strong Markov property asserts that if $\varphi$ is a sample of $\P_N^U$, then the fields $\eta^K_P{\cdot}$ and $\psi_{\cdot}^K$ are independent centered Gaussian fields under $\P_N^U$, and $\psi_{\cdot}^K$ has law $\P_N^{U\cup K}$.

The probability measure $\bar{\P}_N^U$ is the corresponding interlacement measure, which is a Poisson point process of labeled two-sided continuous $\tilde{\Z}^2$-valued trajectories modulo time-shift, with intensity measure $ \mathrm{d}u \otimes \nu_N^U$, where $ \mathrm{d}u$ denotes Lebesgue measure on $[0,\infty)$, and $\nu_N^U$ is characterised by the property that the forward part of the trajectories hitting a closed set $K\subset (\tilde{\Z}^2 \setminus U)$ has ``law'' 
\begin{equation}\label{eq:intensity}
 \int P_x(X_{\cdot \wedge H_U} \in \cdot) \,  \mathrm{d}  e_{K,N}^U(x),
\end{equation}
which is a finite measure of total mass $\text{cap}_{N}^U(K)$. The interlacement set $\mathcal{I}^u$ for $u>0$ is the open set obtained as the union of all trajectories in a sample of $\bar{\P}_N^U$ with label at most $u$. Its distribution is characterised by the property that 
\begin{equation} \label{def:interlacement}
    \bar\P_N^U(\I^u\cap K=\emptyset)=e^{-u\capacity^U_N(K)}, \text{ for all compact }K\subset \tilde{\Z}^2\setminus U.
\end{equation}
The corresponding occupation time field $(\ell_x^u)_{x\in \tilde{\Z}^2}$ accumulates the total local time of the (transient) trajectories with label at most $u$ at $x$, and so $\mathcal{I}^u = \{x\in \tilde{\Z}^2: \ell_x^u > 0 \}$. Note that within the present setup, the relevant trajectories comprising $\mathcal{I}^u$ are almost surely finite and their range is a subset of $\tilde{\Z}^2 \setminus U$. Their forward and backward parts exit $\tilde{\Z}^2$ either through the half-open cables attached to each vertex, or when exiting $\tilde{\Z}^2 \setminus U$ (when $U \neq \emptyset$ has bounded complement).

The measure $\P_N^U$ and $\bar{\P}_N^U$ are intimately linked, by the following instance of an ``isomorphism theorem.'' By \cite[(Isom) and Lemma 6.1]{drewitz_cluster_2022}, which are in force by \cite[Theorem 1.1 and 3.7] {drewitz_cluster_2022} since by \eqref{eq:g_N} we know that $g_N(0)\leq c\log(N)< \infty$ for every $N$, we obtain the following. Let $a>0$. Under $\P_N^U \otimes \bar{\P}_N^U$, let $\mathcal{C}^a$ denote the closure of the union of the connected components of those sign clusters $\{x\in \tilde{\Z}^2: |\phi_x|>0\}$ that intersect the interlacement set $\I^{a^2/2}.$ Under a suitable extension of $\P_N^U \otimes \bar{\P}_N^U$, define the field $(\sigma^a_x)_x \in \tilde{\Z^2}$ by setting
$\sigma^a_x=1$ for all $x\in{\mathcal{C}_a},$ $\sigma^a$ is constant on each of the clusters of $\{ |\phi|>0\}\cap(\mathcal{C}^a)^c,$ and its values on each such cluster are independent and uniformly distributed. Then for each $a>0$, $N \geq 1$ and $U \subset \tilde{\Z}^2$ as above,
\begin{equation}
\label{eqcouplingintergff}
\begin{array}{l}
\displaystyle \big(\sigma_x^a\sqrt{2\ell_{x}^{a^2/2}+\phi_x^2}\big)_{x\in{\tilde{\Z}^2}}\text{ has the same }
\text{law under }\P_N^U \otimes \bar{\P}_N^U
\text{ as }\big(\phi_x+a\big)_{x\in{\tilde{\Z}^2}}\text{ under }\P_N^U.
\end{array}
\end{equation}

\section{Near-critical estimate and an a-priori bound}\label{sec:nearcrit}

\medskip

In this section we prove two preliminary results that will feed into the upper bound for $\theta(a,N)$ proved in the next section. These two results are in a sense complementary. First, we show in Lemma~\ref{lem:near_critical} an upper bound on the one-arm probability that assesses how \eqref{eq:one-armcrit} is affected when one is allowed to change the value of $a$ only `\textit{slightly}'. We then prove in Lemma~\ref{lem:apriori} an a-priori estimate on the annulus crossing probability when $a>0$ is further `\emph{away}'. The challenge is of course to understand quantitatively how small/large the parameters involved need to be to define these near-critical and off-critical regimes.

Recall that $\P_N^U$ denotes the law of the Gaussian free field with killing on $U$, see Section~\ref{sec:facts}. In the sequel for $L \geq 1$ we abbreviate $\P_N^{L} =  \P_N^{\partial B_L}$.

\begin{lemma}[Near-critical estimate]\label{lem:near_critical}
For all $a\in\R$ and $L\geq 2r\geq 2$, letting
\begin{equation} \label{eq:f}
    f^L_r(a)\defeq \sqrt{\frac{\log(L/r)}{\log(L)}} \vee  \frac{\textnormal{sign}(a)a^2}{\sqrt{\log(L)\log(L/r)}},
\end{equation}
one has for $a,L, r$ as above and all $N \geq 1$ that
    \begin{equation}\label{eq:crit_isom_bound}
        \P_N^{L}\big( 0 \stackrel{\geqslant -a}{\longleftrightarrow}\partial B_r )\leq cf^L_r(a). 
    \end{equation}
\end{lemma}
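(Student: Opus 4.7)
The plan is to split on the sign of $a$, using monotonicity when $a \leq 0$ and invoking the isomorphism theorem \eqref{eqcouplingintergff} when $a > 0$.

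When $a \leq 0$, the inclusion $\{\varphi \geq -a\} \subseteq \{\varphi \geq 0\}$ yields $\P_N^L( 0 \stackrel{\geqslant -a}{\longleftrightarrow}\partial B_r) \leq \P_N^L(0\stackrel{\geqslant 0}{\longleftrightarrow}\partial B_r)$, reducing matters to the critical one-arm probability under $\P_N^L$. An adaptation of the upper bound in \eqref{eq:one-armcrit} to the killed setting, using the Green's function and capacity estimates in Lemma~\ref{L:RWestimates} in place of their unkilled analogues, should yield a bound of order $\sqrt{\log(L/r)/\log L}$, which matches $f_r^L(a)$ for $a \leq 0$.

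When $a > 0$, I would apply \eqref{eqcouplingintergff} at parameter $a$ with $U = \partial B_L$ to identify $\P_N^L(0\stackrel{\geqslant -a}{\longleftrightarrow}\partial B_r) = \P_N^L \otimes \bar{\P}_N^L(0 \leftrightarrow \partial B_r \text{ in } \{\sigma^a = +1\})$. Since $\{\sigma^a = +1\} \subseteq \{|\varphi|>0\} \cup \I^{a^2/2}$, the RHS is bounded by the probability of a cable-graph path from $0$ to $\partial B_r$ in $\{|\varphi|>0\} \cup \I^{a^2/2}$, which I would decompose into (i) a pure sign-cluster crossing, contributing at most $c\sqrt{\log(L/r)/\log L}$ by symmetry of $\varphi$ and the critical bound from the previous case, and (ii) a crossing using at least one trajectory of $\I^{a^2/2}$. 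Heuristically, (ii) should be bounded by the expected number of interlacement trajectories entering $B_r$ (of order $\tfrac{a^2}{2}\capacity_N^L(B_r) \leq c\,a^2/\log(L/r)$ by \eqref{eq:cap-killed}) multiplied by the probability of order $\sqrt{\log(L/r)/\log L}$ that the sign cluster of $0$ in $\{|\varphi|>0\}$ reaches any given trajectory (a critical sign-cluster one-arm event); their product is $a^2/\sqrt{\log L \cdot \log(L/r)}$, matching the second term in $f_r^L(a)$.

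The main obstacle is making (ii) rigorous: executing the first-moment bound over trajectories of $\I^{a^2/2}$ while preserving the critical factor $\sqrt{\log(L/r)/\log L}$, rather than losing it to a trivial factor of $1$. This requires carefully decoupling $\varphi$ from $\I^{a^2/2}$: conditioning on the interlacement, using its Poisson structure together with the intensity formula \eqref{eq:intensity} to integrate over trajectories entering $B_r$, and invoking the sign-cluster critical one-arm estimate between $0$ and the range of each such trajectory.
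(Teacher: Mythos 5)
Your treatment of $a\le 0$ (monotonicity and the killed analogue of the critical one-arm bound) and your use of the isomorphism \eqref{eqcouplingintergff} to split the $a>0$ case into a pure sign-cluster crossing and an interlacement-assisted one both match the paper. The gap is in your handling of case (ii). You propose a first-moment bound of the form $\big(\tfrac{a^2}{2}\capacity_N^{\partial B_L}(B_r)\big)\times\big(\text{one-arm probability to a trajectory}\big)$, with the second factor claimed to be of order $\sqrt{\log(L/r)/\log L}$ uniformly over trajectories. It is not: by \eqref{eq:cap-killed}, $\capacity_N^{\partial B_L}(B_{r/2})/\capacity_N^{\partial B_L}(B_r)=\log(L/r)/\log(2L/r)\to 1$ as $L/r\to\infty$, so essentially all the intensity of trajectories entering $B_r$ is carried by those that pass through $B_{r/2}$, and any such trajectory meets the sign cluster $\mathscr{C}^{\geqslant 0}$ with probability bounded below by a universal constant. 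Your first moment is therefore already of order $a^2\capacity_N^{\partial B_L}(B_r)\asymp a^2/\log(L/r)$, which exceeds the target $a^2/\sqrt{\log L\,\log(L/r)}$ by a factor $\sqrt{\log L/\log(L/r)}$. Retaining the constraint $\{\mathscr{C}^{\geqslant 0}\cap\partial B_r=\emptyset\}$ inside the first moment does not rescue this, since for trajectories near $0$ the joint event still has probability of constant order. This loss is fatal downstream: with the choice \eqref{eq:ML_choice_ub}, $\log(L/\xi)\asymp a^2\log N$ and $\log L\asymp\log N$, so the overshoot is $\asymp 1/a$, and the required moment bound $\E_N[f^L_\xi(\bar\eta^L_\xi-\bar a)]\lesssim a$ in \eqref{eq:inner_shift_moments} would fail.

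The paper never integrates over trajectories. Instead it conditions on the field alone. On $\{\mathscr{C}^{\geqslant 0}\cap\partial B_r=\emptyset\}$ one has $\mathscr{C}^{\geqslant 0}\subset\tilde{B}_r$, hence $\capacity_N^{\partial B_L}(\mathscr{C}^{\geqslant 0})\le\capacity_N^{\partial B_L}(B_r)\le c/\log(L/r)$; and by independence of $\varphi$ and $\I^{a^2/2}$ together with \eqref{def:interlacement},
\begin{equation*}
\bar{\P}_N^{L}\big(\I^{a^2/2}\cap\mathscr{C}^{\geqslant 0}\neq\emptyset\,\big|\,\varphi\big)=1-e^{-\frac{a^2}{2}\capacity_N^{\partial B_L}(\mathscr{C}^{\geqslant 0})}.
\end{equation*}
One then integrates the right-hand side against the \emph{exact} law of $\capacity_N^{\partial B_L}(\mathscr{C}^{\geqslant 0})$ supplied by \cite[Theorem 3.7]{drewitz_cluster_2022}, whose density $\tfrac{1}{2\pi\sqrt{g}}x^{-1}(x-g^{-1})^{-1/2}$ on $(g^{-1},\infty)$ (with $g=\E_N^{L}[\varphi_0^2]\asymp\log L$), restricted to $x<c/\log(L/r)$, produces precisely the $\sqrt{\log(L/r)\log L}$ in the denominator. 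In short, the ingredient you are missing is that the cluster capacity has an explicit law, and the square-root gain you were hoping to extract from a one-arm event is instead built into that density.
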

Let us first comment of Lemma~\ref{lem:near_critical}.
For $a=0$, the second term in \eqref{eq:f} is absent, and \eqref{eq:crit_isom_bound} essentially reproduces, say for $2r\approx L \approx N$, the main result of \cite{rz25a}; cf.~also \eqref{eq:one-armcrit}. The point of Lemma~\ref{lem:near_critical} is to assess the effect of varying $a$ slightly, which leads to the term in \eqref{eq:f} involving $a$ in the proof. The bound in \eqref{eq:crit_isom_bound} allows to witness what values of $a$ can be tolerated without affecting the critical estimate too much: the term involving $a$ in the definition of $f^L_r(a)$ starts to take precedence when $a^2 \log (L/r)$ reaches unit order.
\begin{proof}
    First observe that if $a\leq 0$, it follows from the same argument as the proof of \cite[(1.9)]{rz25a} with the Green's function and capacity estimates replaced by those in Lemma~\ref{L:RWestimates} that, 
    \begin{equation}\label{eq:near_crit_easy}
        \P_N^{L}\big( 0 \stackrel{\geqslant -a}{\longleftrightarrow}\partial B_r )\leq  \P_N^{L}\big( 0 \stackrel{\geqslant 0}{\longleftrightarrow}\partial B_r )\leq c\sqrt{\frac{\log(L/r)}{\log(L)}}.
    \end{equation}   
    Now assume $a>0$. We use the representation of $\mathscr{C}^{\geqslant -a}$, the cluster of $0$ in $\{\varphi +a \geq 0\}$ (under $\P_N^L$) via the left-hand side of \eqref{eqcouplingintergff}. Thus, under $\P_N^{L} \otimes \bar\P_N^{ L}$, the connection to $\partial B_r$ either occurs via a cluster of $\{ \varphi \geq 0\}$, or the interlacement must intervene. It follows that the probability in \eqref{eq:crit_isom_bound} is bounded by 
    \begin{equation} \label{eq:isom_connect}
        \P_N^{L}(\mathscr{C}^{\geqslant 0} \cap \partial B_r \neq \emptyset )
        +\bar{\P}_N^{L}\otimes \P_N^{L}(\mathscr{C}^{\geqslant 0} \cap \partial B_r = \emptyset, \mathcal{I}^{a^2/2}\cap \mathscr{C}^{\geqslant 0}\neq\emptyset).
    \end{equation}
    The first probability in \eqref{eq:isom_connect} is bounded by $c\sqrt{\frac{\log(L/r)}{\log(L)}}$ as in \eqref{eq:near_crit_easy}. It follows from \cite[(4.6) and (3.5)]{rz25a} together with \eqref{eq:cap-killed} applied with $r'=cL$ that the second probability in \eqref{eq:isom_connect} is bounded by
    \begin{equation*}
        \bar{\P}_N^{L}\otimes \P_N^{L}\left(\capacity_N^{\partial B_L}(\mathscr{C}^{\geqslant 0} )<\frac{c}{\log(L/r)}, \mathcal{I}^{a^2/2}\cap \mathscr{C}^{\geqslant 0}\neq\emptyset\right).
    \end{equation*}
    Let $g:=\E_N^{L}(\varphi_0)$. By the independence between the field and the interlacement process, \eqref{def:interlacement} and \cite[Theorem 3.7]{drewitz_cluster_2022}, the probability above can be expressed as,
    \begin{align*}
        &\E_N^{L}
        \left[\Big(1-e^{\frac{a^2}{2}\capacity_N^{\partial B_L}(\mathscr{C}^{\geqslant 0})}\Big) \1\Big\{\capacity_N^{\partial B_L}(\mathscr{C}^{\geqslant 0} )<\frac{c}{\log(L/r)}\Big\}\right]\\
        &=\frac{1}{2\pi \sqrt{g}}\int_{g^{-1}}^{\frac{c}{\log(L/r)}}\frac{1-e^{-\frac{a^2}{2}x}}{x\sqrt{x-g^{-1}}}\,dx
        =\frac{1}{2\pi}\int_{0}^{g(\frac{c}{\log(L/r)}-g^{-1})}\frac{1-e^{-\frac{a^2}{2}\frac{u+1}{g}}}{(y+1)\sqrt{y}}\,dy\\
        &\leq \frac{a^2}{4\pi g}\int_{0}^{g(\frac{c}{\log(L/r)}-g^{-1})}y^{-1/2 }\,dy=\frac{a^2}{2\pi g}\sqrt{\frac{cg}{\log(L/r)}-1},
    \end{align*}
    where we performed the change of variable $y=g(x-g^{-1})$ in the second line and used the bound $1-e^{-x}\leq x$ for the inequality. This concludes the proof of \eqref{eq:crit_isom_bound} upon noting $c\log(L)\leq g\leq c^\prime\log(L)$, which follows from \eqref{eq:killed_green_smallh}.
\end{proof}

We come to the second item of this section.
Let $\K^a$ denote the set of all connected components in $\{x\in \tilde{\Z}^2: \varphi_x\geq a\}$. And for $A,B\subset \tilde{\Z}^2$, we use $\{A\substack{\stackrel{\geq a}{\leftrightarrow} \\\textnormal{tr}}B\}$ to denote the event that (under law $\P_N^K$)
\begin{equation}\label{eq:truncated-event}
    \{\exists \mathscr{C}\in \K^a: \mathscr{C}\cap A\neq \emptyset,\mathscr{C}\cap B\neq \emptyset \text{ and } \capacity_N^K(\mathscr{C})<\infty\}.
\end{equation}
Recall that $\bar a= a \sqrt{g_N}$ for $ a>0$ with $g_N$ given by \eqref{eq:g_N} and that $\P_N^{L} =  \P_N^{\partial B_L}$.
\begin{lemma}[A priori estimate]
\label{lem:apriori}
    For all $a\in (-1,1),N\geq ML\geq 1$, such that $M\geq 4$ and $L\geq 10^3$,
    \begin{equation}\label{eq:ub_apriori}
        \P_N^{ML}\left(\partial B_L \substack{\stackrel{ \geq \ba}{\longleftrightarrow} \\ \textnormal{tr}} \partial B_{2L} \right) \leq \exp\left\{-c\,\ba^2\log(M)^{-1}\right\}.
    \end{equation}
\end{lemma}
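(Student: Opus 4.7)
The bound in \eqref{eq:ub_apriori} has the form of a Gaussian tail with effective variance $\log M$, which is the order of $1/\capacity_N^{ML}(B_L)$ in view of \eqref{eq:cap-killed}. Accordingly, my plan is to combine a deterministic lower bound on the capacity of any crossing cluster with a Gaussian tail estimate for the level-set $\{\varphi \geq \bar a\}$ via equilibrium measures, and then to bridge the fixed-set and random-cluster settings through a coarse-graining argument.

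As a preliminary reduction, I assume $\bar a^2 \geq C_0 \log M$ for a suitably large constant $C_0$, since otherwise the RHS of \eqref{eq:ub_apriori} is bounded below by a positive constant and the inequality is trivial. Next, on the truncated event, any cluster $\mathscr{C}\in \K^{\bar a}$ witnessing the crossing is a connected subset of $\tilde B_{ML/2}$ (using $M\geq 4$) of Euclidean diameter at least $L$. A two-dimensional Beurling-type projection argument -- in the spirit of the proof of \eqref{eq:cap-killed}, but upgraded from balls to arbitrary connected sets of comparable diameter -- then yields
\[
\capacity_N^{ML}(\mathscr{C})\geq \frac{c}{\log M}.
\]
For any \emph{deterministic} closed set $K\subset \tilde B_{ML}$ of capacity $\kappa>0$, the centered Gaussian variable $V_K\defeq \int \varphi \, d(e_{K,N}^{ML}/\kappa)$ has variance $1/\kappa$ (by the defining property of the equilibrium measure, $\int g_N^{ML}(x,y)\,de_{K,N}^{ML}(y)=1$ for $x\in K$) and satisfies $V_K\geq \bar a$ whenever $\varphi\geq \bar a$ on $K$. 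The standard Gaussian tail bound then gives
\[
\P_N^{ML}(\varphi\geq \bar a \text{ on } K)\leq \P_N^{ML}(V_K\geq \bar a)\leq \exp(-\bar a^2\kappa/2).
\]

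The remaining and principal technical obstacle is to transfer this deterministic estimate to the random cluster $\mathscr{C}$. I plan to handle this via coarse-graining: partition the annulus $B_{2L}\setminus B_L$ into mesoscopic boxes of side $\ell$ (a small power of $L$) and consider the finite family $\mathcal{K}$ of chains of such boxes that could serve as the skeleton of some crossing cluster. Each $K\in\mathcal{K}$ inherits capacity at least $c/\log M$ from the same Beurling-type argument, while the cardinality of $\mathcal{K}$ can be arranged to satisfy $|\mathcal{K}|\leq \exp(c'\bar a^2/\log M)$ with $c'$ strictly smaller than the constant in the Gaussian tail, by choosing $\ell$ with the help of the preliminary reduction. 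A union bound over $\mathcal{K}$, combining the capacity lower bound with the Gaussian tail, then yields \eqref{eq:ub_apriori}. The entropy-versus-capacity balance in selecting $\ell$ -- ensuring that $\mathcal{K}$ is rich enough to capture every possible crossing yet sparse enough not to overwhelm the exponential gain -- is the main technical difficulty in implementing this plan rigorously; the reduction to $\bar a^2\gg \log M$ leaves some room but a careful quantitative choice is needed.
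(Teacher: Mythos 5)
The paper proves Lemma~\ref{lem:apriori} in a completely different, and much shorter, way: after symmetrizing, it invokes the cable-graph isomorphism \eqref{eqcouplingintergff} to rewrite the truncated crossing probability as the probability that the \emph{vacant set} $\mathcal{V}^{\bar u}$ of interlacements at level $\bar u = \ba^2/2$ crosses the annulus, and then blocks this by showing that with probability $\geq 1-e^{-c\bar u/\log M}$ at least one interlacement trajectory forms a closed circuit in $\tilde B_{2L}\setminus \tilde B_L$. The number of such circuit-forming trajectories dominates a $\textnormal{Poisson}(c\,\bar u\,\capacity_N^{ML}(B_L))$ variable, and \eqref{eq:cap-killed} gives $\capacity_N^{ML}(B_L)\geq c/\log M$. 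No coarse-graining, no capacity lower bound for random clusters, no union bound.

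Your proposal has a genuine gap at its central step: you union-bound over a family $\mathcal{K}$ of chains of mesoscopic boxes and, for each chain $K$, apply the deterministic Gaussian tail
$\P_N^{ML}(\varphi\geq\bar a\text{ on }K)\leq\exp(-\bar a^2\kappa/2)$.
But the truncated crossing event does \emph{not} imply that $\varphi\geq\bar a$ on an entire chain of mesoscopic boxes; it only implies that the (possibly very thin and filamentary) cluster $\mathscr{C}$ passes through those boxes, and $\varphi\geq\bar a$ only on $\mathscr{C}$, whose trace in each box may have capacity far smaller than the box itself. So the event you bound is not implied by the event you need to bound, and the union bound does not go through as stated. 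Repairing this would require localizing the threshold to the random thin set inside each box -- e.g., via conditioning and harmonic-average tails as the paper does at a much larger scale in Sections~\ref{sec:ub}--\ref{sec:harm_avg} -- which is considerably heavier machinery than a lemma of this kind should need. A secondary concern is that your preliminary "Beurling-type" step -- upgrading the capacity lower bound in \eqref{eq:cap-killed} from balls to \emph{arbitrary} connected sets of diameter $\geq L$ -- is not stated or proved in the paper at this point and is itself a nontrivial deterministic input in the recurrent two-dimensional setting; it becomes superfluous once one adopts the paper's interlacement-circuit route, which requires no information about the cluster's geometry whatsoever.
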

\begin{proof}
The truncation $\capacity_N^K(\mathscr{C})<\infty$ inherent to the `truncated' event in \eqref{eq:truncated-event} ensures that the relevant connection between $A$ and $B$ is achieved by a cluster in $\K^a$ which is compact (in $\tilde{\Z}^2$). Hence combining \eqref{eqcouplingintergff} and \cite[(3.17)]{drewitz_cluster_2022}, we have that with $\bu=\ba^2/2$,
     \begin{align}\label{eq:apriori_start}
     \begin{split}
    \P_N^{ML}\left(\partial B_L \substack{\stackrel{ \geq \ba}{\longleftrightarrow} \\ \textnormal{tr}} \partial B_{2L} \right)
    &=  \P_N^{ML}\left(\partial B_L \substack{\stackrel{  \geq - |\ba|}{\longleftrightarrow} \\ \textnormal{tr}} \partial B_{2L} \right) \\
    &=\P_N^{ML}\otimes 
    \bar{\P}_N^{ML}
    \left(
    \begin{array}{c}
    \exists  \mathscr{C}\in \K^0 \text{ such that }
    \mathscr{C}\cap B_L\neq \emptyset ,\\[6pt]
    \mathscr{C}\cap B_{2L}\neq \emptyset,
    \mathscr{C}\cap \I^{\bu}=\emptyset
    \end{array}
    \right)\\
    &\leq \bar{\P}_N^{ML}
    \left(\partial B_L {\stackrel{ \mathcal{V}^{\bu}}{\longleftrightarrow} } \partial B_{2L}\right),
     \end{split}
     \end{align}
     where $\mathcal{V}^{\bu}=\tilde{\Z}^2 \setminus \mathcal{I}^{\bu}$ is the vacant set of the interlacement at level $\bu$.
     To proceed, let's first lower bound the probability that the diffusion $X$ forms a closed circuit in the annulus $\tB_{2L}\setminus \tB_L$, when starting from a site in $ B_L$. Here with hopefully obvious notation $\tB_L$ is the subset of $\tilde{\Z}^2$ obtained from $B_L \subset \Z^2$ by including the cables between any two neighbouring vertices in $B_L$.
     Let $L^\prime=\frac{L}{100}$ and
     for some integer $n\geq 1$, consider a path $\gamma=(x_1,\dots,x_n)\subset B_{2L}\setminus B_L$ such that $n\leq c$, and
\begin{align} \label{eq:def_circuit_path}
\begin{split}
  &\begin{array}{r}   
    |x_i - x_{i+1}| < 2L' \ \text{for all } i\in\{1,\dots,n-1\},\, |x_n - x_{1}| < 2L',
  \end{array} \\[0.3em] 
&\begin{array}{r}
B(x_i, 10L') \subset B_{2L} \setminus B_L \ \text{for all } i\in\{1,\dots,n\} \text{ and } \partial B_{3/2L}\subset \bigcup_{i=1}^n B(x_i, L') .
\end{array} \\[0.3em] 
\end{split}
\end{align}
Next, consider a sequence of random times as follows. Denoting by $\theta_t$, $t \geq 0$ the time shift so that $\theta_t X_s=X_{t+s}$, and in the notation from Section~\ref{sec:facts}, let
$\tau_{\text{in}}=H_{B(x_1,L^\prime/2)}$ , $\tau_{\text{out}}=H_{B(x_1,L^\prime)^{c}}\circ \tau_{\text{in}}$ (with the convention $\tau_{\text{out}}=\infty$ in case $\tau_{\text{in}}=\infty$; similar conventions apply below) and define $K=X_{[\tau_{\text{in}},\tau_{\text{out}}]}$.
Now define recursively for $i\in\{1,\dots,n-1\}$,
\[
\tau_1 = \tau_{\mathrm{out}},\qquad
\tau_{i+1} = H_{B(x_{i+1},L')}\circ \theta_{\tau_i},\quad
\sigma_i = H_{B(x_{i+1},10L')^{c}}\circ \theta_{\tau_i},
\]
as well as 
\[\tau_{n+1}=H_{K}\circ\theta_{\tau_n},\qquad
\sigma_n:=H_{B(x_1,10L')^c}\circ\theta_{\tau_n}.
\]
Further, consider for $t>0$ the following events
\[
G_0(t)
\defeq
\big\{\tau_{\text{in}} <\infty,\,
\capacity_N^{B(x_1,10L^\prime)^{\mathrm{c}}}(K)\geq t \big\}
\quad \text{and} \quad 
F_i \defeq \{\tau_{i+1}<\sigma_i\},\text{ for } i=1,\dots,n.
\]
Let $\mathcal{F}_{\tau_n}$ be the filtration of the process $X$ at the random time $\tau_n$ and abbreviate $P_x^{L}$ the law of the killed process $X_{\cdot \wedge H_{\partial B_L}}$ under $P_x$. It follows from the analogue of \eqref{eq:last_exit} for $P_x^{L}$, \eqref{eq:killed_green_smallh} and \eqref{eq:def_circuit_path}, applying the strong Markov property, that on the event $\bigcap_{i=1}^{n-1}F_i\cap G_0(t)$, which belongs to $\mathcal{F}_{\tau_n}$, and for all $x \in \Z^2$,
\begin{align}\label{eq:enter_circle1}
\begin{split}
&P_x^{ML}(F_n\mid \mathcal{F}_{\tau_n})\geq \inf_{K^\prime}\inf_{x\in B(x_n,L^\prime)}
P_x^{ML}(H_{K^\prime}<H_{B(x_1,10L')^c})
\geq t\log\Big(\frac{10L^\prime}{4L^\prime}\Big) \geq c\,t,
\end{split}
\end{align}
where the first infimum is over all $K^\prime \subset B(x_1,L^\prime)$ such that $\capacity_N^{B(x_1,10L^\prime)^{\mathrm{c}}}(K^\prime)\geq t $. By a similar reasoning, we also have that
\begin{equation} \label{eq:enter_circles}
\inf_{x\in B(x_i,L^\prime)}P_x^{ML}\left(H_{B(x_{i+1},L^\prime)}<H_{B(x_{i+1},10L^\prime)^{c}}\right)\geq c \quad \text{for all } i\in\{1,\dots n-1\} .   
\end{equation}
Additionally, for all $x\in B_L$, it follows from \cite[Proposition 5.4]{rz25a} that with $t=c$ small enough (henceforth fixed),
 \begin{align} \label{eq:first_circle}
 \begin{split}
     P_x^{ML}(G_0(t))
    = &
    \sum_{y\in \partial B(x_1,L^\prime/2)}P_x^{ML}
    \left(\tau_{\text{in}}  <\infty,X_{\tau_{\text{in}} }=y\right)
    P_y^{ML}
    \left(\capacity_N^{B(x_1,10L^\prime)^{\text{c}}}(K)\geq t  \right)  \\
    \geq & c
    P_x^{ML}
    \left(H_{B(x_1,L^\prime/2)} <\infty\right)
    \geq c\frac{\log(ML/L)}{\log(ML/L^\prime)}\geq c,
 \end{split}
 \end{align}
 where we also used Lemma~\ref{L:RWestimates} and \eqref{eq:def_circuit_path} in the last line. Hence combining \eqref{eq:enter_circle1}, \eqref{eq:enter_circles}, \eqref{eq:first_circle} and \eqref{eq:def_circuit_path}, we get recursively that for all $x\in B_L$,
\begin{align*}
\begin{split}
&P_x^{ML}\left(X \text{ forms a closed circuit in } \tB_{2L}\setminus \tB_L\right)
\geq P_x^{ML}\left( G_0(t), F_i, \, 1\leq i \leq n \right)
\geq c^{n+1}\geq c^\prime.
\end{split}
\end{align*}
To conclude, denote by $W_{\text{disc}}$ the set of excursions modulo time-shift in the support of the intensity measure $\nu_N^{ML}$ (see Section~\ref{sec:facts}) that hit $B_L$ and forms a closed circuit in $\tB_{2L}\setminus \tB_L$. By asking the forward trajectory after entering $B_L$ to form the closed circuit, we can bound the intensity measure of $W_{\text{disc}}$ using \eqref{eq:intensity} the previous display and by
 \begin{align*}
     \nu_N^{ML}(W_{\text{disc}})&\geq \int
     P_x^{ML}\left(X \text{ forms a closed circuit in } \tB_{2L}\setminus \tB_L\right)  \,  \mathrm{d}  e_{B_L,N}^{ML}(x)\\
     &\geq c'\, \capacity_N^{ML}(B_L)\geq \frac{c}{\log(M)},
 \end{align*}
 where we also used \eqref{eq:cap-killed} in the last bound. Due to the above display and \eqref{eq:apriori_start}, \eqref{eq:ub_apriori} thus follows since the number of interlacement trajectories at level $\bu$ that forms a closed circuit in $\tB_{2L}\setminus \tB_L$ stochastically dominates a $\text{Poisson}(c\ba^2\log(M)^{-1})$ random variable.
\end{proof}

\section{Upper bound} \label{sec:ub}

In this section, we prove an upper bound of $\theta(\ba,N)$, quantitative in $a$ and $N$, from which the the upper bound corresponding to \eqref{eq:thm-asymp} (that is, the analogue of \eqref{eq:thm-asymp} with a $\limsup$ instead of $\lim$ as $N/\xi \to \infty$), follows. The argument appears in full in the present section, but it is contingent on several intermediate results, namely, Proposition~\ref{prop:coarse_graining} and Lemmas~\ref{lem:sup_tail}-\ref{lem:harm_avg}, whose proofs are postponed to later sections.

\begin{theorem}\label{prop:upper_bound}
    For all $a\in(-1,1)$ and $N\geq 1$, we have that with $\xi$ as in \eqref{def:correlation_length},
    \begin{equation} \label{eq:prop_ub}
        \theta(\ba,N)\leq \theta(0,\xi)\exp\left\{-c\,\log\big((N/\xi)\vee 2\big)\right\}.
    \end{equation}
    Furthermore, for all $\eta\in(0,1)$, there exists $\Cl[c]{ub}(\eta)<\infty$ such that when $a^2\log(N)\geq \Cr{ub}$,
    \begin{equation} \label{eq:prop_ub_asymp}
    \theta(\ba,N)\leq  \theta(0,\xi)\exp\left\{-\frac{1-\eta}{2} \capacity_{\R^2}^{\tau}\big([0,1]\big)\log(N/\xi)\right\}.
    \end{equation}
\end{theorem}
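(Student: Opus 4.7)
The plan is to combine a Markov decomposition of the one-arm event at a suitably chosen scale $L$ with a coarse-graining procedure in the annulus $B_N \setminus B_L$; the three scales $\xi \leq L \leq ML \leq N$ of \eqref{eq:scales} will be tuned quantitatively in $(a,N)$. Specifically, setting $\mathcal{F}=\sigma(\varphi|_{\partial B_L})$ and decomposing $\varphi = \eta^L + \psi$ on $B_L$, where $\eta^L$ is the harmonic extension of $\varphi|_{\partial B_L}$ (the conditional mean) and $\psi$ has law $\P_N^L$ independent of $\mathcal{F}$, one bounds the event $\{0 \stackrel{\geq \bar a}{\longleftrightarrow} \partial B_N\}$ by the conditionally-independent product of an inside event $\{0 \stackrel{\geq \bar a}{\longleftrightarrow} \partial B_{L/2}\}$ and an outside crossing $\{\partial B_L \stackrel{\geq \bar a}{\longleftrightarrow} \partial B_N\}$, to obtain
\begin{equation*}
\theta(\bar a, N) \leq \E_N\Big[\P_N^L\Big(0 \stackrel{\geq \bar a - \eta^L}{\longleftrightarrow} \partial B_{L/2}\Big)\,\cdot\, \P_N\big(\partial B_L \stackrel{\geq \bar a}{\longleftrightarrow} \partial B_N \,\big|\, \mathcal{F}\big)\Big].
\end{equation*}

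The inside probability is estimated via Lemma~\ref{lem:near_critical} (applied with ``$r=L/2$''), which yields a bound of the form $c\, f^L_{L/2}(\eta^L_0 - \bar a)$. On the typical event that $|\eta^L_0 - \bar a|$ is of unit order, the first term in \eqref{eq:f} dominates and gives something of order $(\log L)^{-1/2}$. The correlation length $\xi = N e^{-a^2 g_N}$ is calibrated precisely so that this bound equals $\theta(0,\xi) \asymp (\log\xi)^{-1/2}$ up to constants when $L$ is comparable to $\xi$; atypical deviations of $\eta^L_0$ are paid for by the Gaussian ``harmonic-average'' term $a_2$ in \eqref{eq:3terms}. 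This is how Lemma~\ref{lem:near_critical} is used as advertised, to identify $\xi$ as the largest scale on which the bound remains of critical order.

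For the outer crossing from $\partial B_L$ to $\partial B_N$, coarse-grain along a chain of boxes of scale $L$ (each embedded in a larger box of scale $ML$) tiling a straight line from $\partial B_L$ to $\partial B_N$, in the spirit of \cite{goswami_radius_2022,prevost_first_2024} but aiming for polynomial rather than stretched-exponential decay. Any crossing cluster must produce a crossing in each box, and the truncated a-priori estimate Lemma~\ref{lem:apriori} supplies a per-box crossing cost $\exp(-c\bar a^2/\log M)$; the product across boxes yields the ``local'' term $a_1$ of \eqref{eq:3terms}. Decoupling successive boxes proceeds by conditioning the field at scale $ML$, which introduces a second harmonic shift and thus the further Gaussian term $a_3$, to be controlled jointly with $a_1$.

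For the crude bound \eqref{eq:prop_ub}, it suffices to take $M$ a sufficiently large constant and $L\asymp \xi$; then the three terms $a_1,a_2,a_3$ balance at some polynomial order in $N/\xi$ with an unspecified $c>0$. For the sharp bound \eqref{eq:prop_ub_asymp} in the regime $a^2\log N\to\infty$, one takes $L/\xi, M \to \infty$ slowly (the quantitative choice is \eqref{eq:ML_choice_ub}), and the exponent $\tau = \tfrac12 \capacity_{\R^2}([0,1])$ emerges when the aggregate of per-box costs is reinterpreted, via the representation \eqref{eq:cap_time_change_bm}, as a Riemann-sum approximation of the continuum capacity of the line segment $[0,1]\subset\R^2$ (non-polar in two dimensions, hence $\tau>0$). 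The principal obstacle will be the simultaneous sharp balancing of $a_1,a_2,a_3$: the Gaussian controls $a_2$ and $a_3$ correspond to harmonic-average deviations at two distinct length scales ($L$ and $ML$), and their combined Gaussian cost must exactly match the polynomial decay from $a_1$ without incurring polylogarithmic slack — this forces the quantitative choice of $(L,M)$ and drives the discrete-to-continuum approximation of the capacity.
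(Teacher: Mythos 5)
Your scaffold — a Markov decomposition into an inside one-arm event and an outside crossing, with coarse-graining at scales $L \ll ML \ll N$ and three error contributions — matches the paper at the level of outline, but two of your key claims about how the sharp exponent $\tau$ is produced are wrong, and a third design choice departs from the paper in a way that undermines the decoupling.

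\textbf{Where the constant $\tau$ actually comes from.} You write that $\tau = \tfrac12\capacity_{\R^2}([0,1])$ ``emerges when the aggregate of per-box costs [from Lemma~\ref{lem:apriori}] is reinterpreted \dots as a Riemann-sum approximation of the continuum capacity,'' i.e.\ that the local crossing term $a_1$ drives the sharp polynomial decay. This is backwards. In the paper's proof the product of independent per-box crossing costs (the event $F^2(\cC)$, bounded by \eqref{eq:ub_local_field}) produces $\exp\{-c\,\epsilon\rho\, n\,\ba^2/\log M\}$, which for the choice \eqref{eq:ML_choice_ub} decays like $\exp\{-c'\ba^2\log(a^2\log N)\}$ — strictly faster than $\exp\{-c\ba^2\}$ and therefore subdominant. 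The sharp constant arises from the harmonic-average deviation $F^1(\cC)$ (your $a_3$): its Gaussian cost is $\exp\{-\tfrac{\capacity_N(\Sigma(\tcC))}{2(1+\eta)}(1-\epsilon)^2\ba^2\}$, and the entire point of the porous-tube capacity estimates in Section~\ref{sec:tube} and of \eqref{A:capacity} is to show $\capacity_N(\Sigma(\tcC)) \to \capacity_{\R^2}([0,1])$, so that this term equals $\exp\{-\tau(1-o(1))\log(N/\xi)\}$. The roles of $a_1$ and $a_3$ in your sketch must be swapped: $a_1$ and $a_2$ are the ones that need to decay strictly faster.

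\textbf{The geometry of the coarse-graining.} You describe the coarse-graining as boxes ``tiling a straight line from $\partial B_L$ to $\partial B_N$,'' so that ``any crossing cluster must produce a crossing in each box.'' A crossing cluster need not follow a straight line, so an upper-bound argument cannot fix the boxes along one in advance. The paper's Proposition~\ref{prop:coarse_graining} tracks the \emph{actual} crossing path $\gamma$ via a peeling-plus-hierarchical scheme (Lemma~\ref{lem:k_prop_embedding}), extracts a well-separated family $\cC\subset\gamma$, and pays the entropy $|\A|\leq\exp\{\Cr{entropy_A}n\log M\}$ of all such families; the capacity estimate of the resulting ``porous tube'' is what converges to $\capacity_{\R^2}([0,1])$. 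The straight-line tiling you describe is the picture used for the \emph{lower} bound (cf.\ $\cL_L$ in \eqref{eq:defLell}), not the upper bound.

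\textbf{The inside event and the correlation length.} You apply Lemma~\ref{lem:near_critical} with $r=L/2$ (inside event to $\partial B_{L/2}$, conditioning on $\partial B_L$). The paper applies it with $r=\xi$ (inside event to $\partial B_\xi$, conditioning on $\partial B_{L/2}$). With $r=\xi$, the first term in $f^L_\xi$ is $\sqrt{\log(L/\xi)/\log L}\asymp|a|$, and together with the moment bound \eqref{eq:inner_shift_moments} this is precisely what gets absorbed into $\theta(0,\xi)$ at the end — this is literally how $\xi$ is exhibited as the correlation length. With $r=L/2$ one obtains $\asymp(\log L)^{-1/2}$, which contains no $a$-dependence and therefore never ``sees'' $\xi$. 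Moreover the shift $\eta^L_\cdot$ is not constant, so you cannot feed $\eta^L_0$ into the lemma; one must take a supremum over the crossing region, as in $\bar\eta^L_\xi$ in \eqref{eq:bareta}, and the tail bound Lemma~\ref{lem:sup_tail} (with its $\xi/L$ chaining correction) relies on $\tB_\xi$ sitting well inside $B_{L/2}$. Finally, in the regime $a^2\log N\to\infty$ one has $\ba=a\sqrt{g_N}\to\infty$, so ``$|\eta^L_0-\ba|$ of unit order'' is not the typical event; typically $\eta^L_0$ is of order $\sqrt{\log(N/L)}\ll\ba$ and the argument of $f$ is $\asymp\ba\gg 1$ with negative sign. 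The choice of which term in the $\vee$ of \eqref{eq:f} dominates then hinges on that sign, an issue your sketch glosses over.
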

Actually, one can prove \eqref{eq:prop_ub} with a short argument using \cite[(3.7) and (3.10)]{rz25a} in the case $h_N=1$ with an adaptation of the differential inequality in \cite[Lemma 4.3]{drewitz_critical_2023}. 
The fact that \eqref{eq:prop_ub} holds in the near-critical regime, i.e., when $a^{2}\log N \leq c$, follows directly from \eqref{eq:crit_isom_bound} together with \cite[Theorem 1.1]{rz25a}. It therefore remains only to establish \eqref{eq:prop_ub_asymp}.

\begin{proof}[Proof of \eqref{eq:prop_ub_asymp}]
For $a=0$, the claim follows immediately from \eqref{eq:prop_ub} (actually, since $\xi \leq N$ and the condition $\text{cap}_N( \mathscr{C}^{\geqslant a}) < \infty$ holds $\P_N$ -a.s.~when $a=0$, the bound is immediate by monotonicity). In view of \eqref{eq:connectivity_function} and by symmetry (see above \eqref{eq:apriori_start} for the argument), $\theta(\ba,N)= \theta(-\ba,N)$. For the remainder of this section we assume that $a > 0$. 
Let $N \geq L\geq 10\xi$ and, recalling the notation from \eqref{eq:field_decomp}, define 
\begin{equation}\label{eq:bareta}
\bar{\eta}^L_\xi=\sup_{x\in \tB_\xi}\eta^{\partial B_{L/2}}_x
\end{equation} 
(note that $\bar{\eta}^L_\xi$ depends implicitly on $a$ via $\xi$). Let $\mathcal{F}=\sigma(\varphi_x: x \in \partial B_{L/2})$.
Applying the Markov property of the field, see below \eqref{eq:field_decomp} and Lemma~\ref{lem:near_critical} with $r=\xi$, it follows that
\begin{align} \label{eq:ub_start-no-cg}
\begin{split}
    \theta(\ba,N)
    &\ \stackrel{ a>0}{\leq} \E_N\Big[ \P_N\big(\1\{\mathscr{C}^{\geqslant \ba} \cap \partial B_\xi \neq \emptyset\} 
    \1\{\partial B_{L}\stackrel{\geq \ba}{\leftrightarrow} \partial B_N\}
    \,\big|\,\mathcal{F} \big)\Big]\\
    &\stackrel{\text{Markov}}{\leq} \E_N\Big[ 
    \P_N^{{L/2}}\big(\mathscr{C}^{\geqslant \ba-\bar{\eta}_\xi^L} \cap \partial B_\xi \neq \emptyset\big)
    \P_N\big(
    \partial B_{L}\stackrel{\geq \ba}{\leftrightarrow} \partial B_N
    \,\big|\,\mathcal{F} \big)\Big]\\
    &\ \stackrel{\eqref{eq:crit_isom_bound}}{\leq} c\,\E_N\Big[ 
    f^L_{\xi}(\bar{\eta}^L_\xi-\ba)\times
    \P_N\big(
    \partial B_{L}\stackrel{\geq \ba}{\leftrightarrow} \partial B_N
    \,\big|\,\mathcal{F} \big)\Big],
\end{split} 
\end{align}
with $f^L_{\xi}$ as defined in \eqref{eq:f}.
To deal with the (conditional) crossing event in the last line of \eqref{eq:ub_start-no-cg}, we apply a coarse-graining scheme whose key features for the proof are gathered in the next proposition. The goal of the scheme is control the entropy from all possible crossing paths. %A naive approach yields a complexity growing exponentially in $N/L$, which for relevant choice of $L$ (cf.~\eqref{eq:ML_choice_ub} below) is irreconcilable with the precise polynomial decay in $N$ we wish to prove. 
Instead of keeping track of the whole path $\gamma$ that connects $\partial B_L$ to $\partial B_N$, we only retain the information of $\gamma$ in a collection of well-separated boxes of radius $L$.

We first specify the setup for the boxes that are used in the coarse-graining. In the sequel $L\geq 1$ and $M\geq 100$ are positive integers. We introduce the renormalised lattice $\Lambda(L)\defeq (\lfloor 2^{-1/2} L \rfloor\vee 1) \Z^2$ along with,
\begin{align} \label{eq:various_boxes}
\begin{split}
    \bC_x^L=B(x,L), 
    \quad \tC_x^L=B(x,2L), 
    \quad \text{and}\quad
    \haC^L_x=B(x,ML),
\end{split}
\end{align}
where $B(x,r)$ refers to the set of points at  Euclidean distance at most $r$ from $x$. 
We write $\bC^L$ to mean $\bC_x^L$ if $x=0$, and adopt the same shorthand notation for $\tC^L$ and $\haC^L$. We omit the superscript $L$ when it is clear from the context. We write $\tilde C_x^L,\tilde D_x^L$ and $\tilde U_x^L$ to denote the corresponding balls in the cable system and adopt similar shorthand notations.
With $\xi$ as in \eqref{def:correlation_length}, let
\begin{equation}\label{def:C}
    \mathcal{C} \subset \Lambda(L)\cap(B(x,N)\setminus B(x,2\xi)) \text{ a collection of sites with mutual distance~at least $ 16ML$},
\end{equation}
and 
\begin{equation} \label{def:C_boxes}
    \Sigma(\mathcal{C}) \defeq \bigcup_{x\in\cC} \bC^L_x.
\end{equation}
The reader could choose to omit $B_{2\xi}$ from \eqref{def:C} for the purpose of the next result. In practice Proposition~\ref{prop:coarse_graining} will apply when $N \gg \xi$.

\begin{proposition}[Coarse-graining] \label{prop:coarse_graining}
    Let $\eta\in(0,1)$. There exist 
    $\Cl[c]{cg_rho}(\eta)>0$ and 
    $\Cl[c]{NML}(\eta) \in [1,\infty)$ such that for all $L\geq 1$, $M\geq 100$, $N\geq \Cr{NML}ML$ 
    and $x\in \mathbb{Z}^2$, there exists a family $\mathcal{A}=\mathcal{A}^{L,M}_{x,N}$ 
    of collections $\cC$ as in (\ref{def:C}) such that with $n= N/ML$, $R=ML\log (n)$ and $P=\lfloor N/(5R)\rfloor$:
    \begin{align} 
   &\label{A:entropy}
      \begin{array}{l}   |\mathcal{A}| \leq \exp\left\{\Cl[c]{entropy_A} n\log(M)\right\}, \end{array}\\[0.3em] 
             &\begin{array}{l}  |\cC|\in [\Cl[c]{card_A_lb}n,\Cl[c]{card_A_ub}n], \ \text{ for all }\cC \in \mathcal{A}, \end{array} \label{A:cardinality} \\[0.3em] 
             & \label{A:connectivity}
        \begin{array}{l} \text{for any path } \gamma \text{ in } \Lambda(L) \text{ from } \bC_x^{L}
        \text{ to } B(x,{N-2L})^{\text{c}}, \exists \cC\in \mathcal{A}
        \text{ such that } \cC \subset \gamma, \end{array} \\[0.3em] 
  &\label{eq:A_structure}
    \begin{array}{l}
        \text{there exists } (\cC_i)_{1\leq i\leq P} \text{ such that }
        \cC=\bigcup_{i=1}^P \cC_i 
        ,|\cC_i|\in \left[\Cl[c]{card_sA_lb}\log n,\Cl[c]{card_sA_ub}\log n\right]
        \\
        \text{and }
        d(x_i,B(x,L))\geq (5i-4)R 
        \text{ for all } x_i\in \Sigma(\cC_i)
        \text{ and } \cC\in \mathcal{A}, 
    \end{array} \\[0.3em] 
 & \label{A:capacity}
    \begin{array}{l}
        \capacity_N(\Sigma(\tilde\cC)) \geq \left(cH+ \frac{1+\eta}{\capacity_{\R^2}^{\tau} ([0,1])}\right)^{-1} \text{ for all } \cC\in\mathcal{A} \text{ and } \tilde\cC\subset \cC \text{ with } |\tilde\cC |\geq |\cC|(1-\rho),
          \\
         \text{where $\rho \in(0,\Cr{cg_rho}]$ and }H= H_{N,M,L}\defeq \frac{1}{n} \log\left(n\right)^2 + \frac{1}{n}\log\left(M\log n \right)\log n .
    \end{array}
    \end{align}
\end{proposition}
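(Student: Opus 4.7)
The plan is to construct $\mathcal{A}$ by extracting a well-separated skeleton from each candidate path, organised into radial slabs, and then verify the five properties one by one. Concretely, for any path $\gamma\subset \Lambda(L)$ from $\bC_x^L$ to $B(x,{N-2L})^{\text{c}}$, I first partition the annulus $B(x,N)\setminus B(x,L)$ into $P$ radial slabs $S_i\defeq B(x,(5i+1)R)\setminus B(x,(5i-4)R)$ for $1\leq i\leq P$, each of radial width of order $R=ML\log n$. Within each slab $S_i$ I greedily select sites $y_1,\dots,y_{k_i}$ from $\gamma\cap \Lambda(L)\cap S_i$ with pairwise distance at least $16ML$; existence of $k_i$ of order $\log n$ such sites follows because $\gamma$ must traverse a radial distance of at least $5R$ inside $S_i$, hence contains at least $5R/(16ML)=5(\log n)/16$ sites mutually separated by $16ML$. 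I then set $\cC_i\defeq\{y_1,\dots,y_{k_i}\}$, $\cC\defeq\bigcup_i \cC_i$, and let $\mathcal{A}$ be the family of all $\cC$ arising in this way from some admissible $\gamma$.

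Properties \eqref{A:cardinality}, \eqref{A:connectivity} and \eqref{eq:A_structure} are then immediate from the construction, with $|\cC|=\sum_i k_i$ of order $P\log n = n/5$. For the entropy \eqref{A:entropy}, I enumerate $\mathcal{A}$ slab-by-slab: consecutive skeleton points inside a given slab lie in a disc of radius $O(ML)$ around the previous one (since $\gamma$ traverses at most $O(M)$ nearest-neighbour $\Lambda(L)$-steps between them), hence each skeleton point admits $O(M^2)$ possible locations in $\Lambda(L)$. With $|\cC|$ of order $n$ total skeleton points, this yields $|\mathcal{A}|\leq \exp(cn\log M)$.

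The main obstacle is the capacity bound \eqref{A:capacity}. Following the general strategy of \cite{goswami_radius_2022, prevost_first_2024} but adapted to the two-dimensional kernel $K_0$, I would use the Dirichlet variational characterisation $(\capacity_N(A))^{-1}=\inf_\mu \int\int g_N(y,z)\,d\mu(y)\,d\mu(z)$ with $\mu$ a probability measure on $A$, and construct an explicit test measure $\mu$ on $\Sigma(\tilde\cC)$. The natural candidate places, on each ball $\bC^L_{x_j}$ with $x_j\in \tilde\cC$, mass proportional to the normalised equilibrium measure of $\bC^L_{x_j}$ killed on $\partial\haC^L_{x_j}$, with the weights across balls chosen so that after rescaling by $N$ the projection onto the radial direction approximates the equilibrium measure of the segment $[0,1]\subset\R^2$ appearing in \eqref{eq:tau}. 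Splitting $\int\int g_N\, d\mu\, d\mu$ into a short-range part (pairs in the same $\haC^L_{x_j}$) and a long-range part (pairs in distinct $\haC^L_{x_j}$'s), the short-range part is controlled via \eqref{eq:killed_green_smallh} and \eqref{eq:cap-killed}, contributing a term of order $n^{-1}\log(M\log n)\log n$ to $H$. The long-range part is approximated, using \eqref{eq:g2-asymp-unif} together with Lemma~\ref{lem:bessel_cont} to control the regularity of $K_0$, by the continuum energy of the rescaled test measure on $[0,1]$; optimisation then produces the leading term $(1+\eta)/\capacity_{\R^2}^\tau([0,1])$, while the Riemann-sum discretisation across the $P$ slabs accounts for the remaining $n^{-1}(\log n)^2$ piece of $H$.

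The hardest sub-task will be showing robustness of this energy estimate under the removal of up to $\rho|\cC|$ points: provided $\rho \leq \Cr{cg_rho}(\eta)$ is fixed small enough, I would reweight the test measure across the remaining balls without inflating the continuum energy by more than a $(1+O(\rho))$ factor, which can then be absorbed into the $(1+\eta)$ prefactor. Lemma~\ref{lem:bessel_cont} will be crucial for showing that shifting mass at scale $L$ within a $\haC^L_{x_j}$-ball, or redistributing mass across adjacent slabs, produces only $O(\rho)$ relative perturbations in $K_0$, so that the removed sites contribute only to the $H$-error term.
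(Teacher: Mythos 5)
Your high-level plan (slab decomposition, scale-$ML$ skeleton, test measure for the capacity) is in the right spirit, but the central construction has a genuine gap, and it is precisely the gap the paper's hierarchical ingredient is designed to close.

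The problem is in the greedy selection. Within a slab $S_i$, if you extract points greedily along $\gamma$ subject to pairwise distance $\geq 16ML$, you cannot simultaneously control the cardinality $|\cC_i|$ from above and the entropy. On the one hand, $|\cC_i|$ is unbounded above by $\log n$: at radial position $\asymp iR$ the slab has circumference $\asymp i R$, so a winding path can deposit $\asymp i(\log n)^2$ sites at mutual distance $\geq 16ML$, which is much larger than $\log n$ for large $i$; this breaks \eqref{A:cardinality} and \eqref{eq:A_structure}. On the other hand, if you cap the greedy at $\asymp\log n$ points, your entropy count is based on the claim that ``consecutive skeleton points lie in a disc of radius $O(ML)$ around the previous one, since $\gamma$ traverses at most $O(M)$ $\Lambda(L)$-steps between them.'' This is false: by definition of the greedy, between consecutive selections $y_j,y_{j+1}$ the path is only constrained to stay within distance $16ML$ of the \emph{union} $\{y_1,\dots,y_j\}$, and it can traverse arbitrarily many $\Lambda(L)$-steps in that union of discs before escaping. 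So $y_{j+1}$ has $\asymp jM$ (not $\asymp M^2$) admissible $\Lambda(L)$-locations, and more importantly your counting argument as stated simply does not apply. There is no local way to patch this: a one-pass greedy along the path cannot give you, all at once, pairwise $16ML$-separation, $\asymp\log n$ points per slab, and $\exp\{c\,n\log M\}$ entropy. This is why the paper uses a two-tier scheme: a ``peeling'' step to pick one anchor $x_i$ per shell $\text{Ann}_i$ (whence the factor $P$ and the radial anchoring in \eqref{eq:A_structure}), followed inside each $B(x_i,R)$ by the \emph{hierarchical} binary-tree embedding of Lemma~\ref{lem:k_prop_embedding} (from \cite{prevost_first_2025}), which produces \emph{exactly} $2^k\asymp\log n$ well-separated sites, with entropy $\exp\{c\log(n)\log(M)\}$ per shell because each tree vertex is drawn from an $O(M^2)$-size set at its parent's scale. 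The recursion across scales $L_0<L_1<\cdots<L_k\asymp R$ is what replaces your greedy and is the missing idea in your proposal.

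On the capacity bound \eqref{A:capacity}, your route (an explicit test measure on $\Sigma(\tilde\cC)$ that projects onto a discretisation of the equilibrium measure of $[0,1]$) is genuinely different from the paper's: they first prove the intermediate capacity bound \eqref{A:capacity_pre} for the Cantor-type set $\Sigma(\cC_i)$ at scale $R$, and then feed these blocks as ``units'' $S_k=\Sigma(\tilde\cC\cap\cC_k)$ into the separate porous-tube estimate Proposition~\ref{prop:tube}, which is where the discrete-to-continuum comparison and the $(1+\eta)$-factor live. Your single-shot approach is plausible and would compute an upper bound for $\inf_\mu\langle\mu,G_N\mu\rangle$ of the required form, and the robustness under removal of a $\rho$-fraction of sites can indeed be absorbed into $(1+\eta)$ as you sketch --- but it presupposes sites with the cardinality, separation, and radial-slab structure produced by the hierarchical construction, so it does not salvage the proof on its own.
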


The proof of Proposition~\ref{prop:coarse_graining} would take us too far on a tangent, and is postponed to Section~\ref{sec:coarse_graining}. It also hinges on certain estimates having to do with \eqref{A:capacity} that are presented separately in the next section.\\

We now resume the proof of \eqref{eq:prop_ub_asymp}.
For a given $\eta>0$ (as appearing in the statement of Theorem~\ref{prop:upper_bound}), let $M\geq 100$, $L\geq 10\xi$ and $N\geq 100ML$, all satisfying the conditions in Proposition \ref{prop:coarse_graining}. Let $\A= \A^{L,M}_{0,N}$ be the set produced by applying Proposition \ref{prop:coarse_graining}.
Combining the bound obtained in \eqref{eq:ub_start-no-cg} and the above coarse-graining, taking adavantage of \eqref{A:connectivity} and \eqref{def:C} only for the time being, one obtains that
\begin{equation}\label{eq:ub_start}
   \theta(\ba,N) \leq c\,\E_N\Big[ 
    f^L_{\xi}\big(\bar{\eta}^L_\xi-\ba\big)\times
    \P_N\Big(
    \exists \cC\in\A \text{ such that } \bC_x \stackrel{\geq \ba}{\leftrightarrow}{(\haC_x)^c} \text{ for all }x\in \cC
    \,\Big|\,\mathcal{F} \Big)\Big].
\end{equation}
For $x\in\cC$, we write $\eta^x_\cdot,\psi^x_\cdot$ to mean $\eta^{(\haC_x)^{\text{c}}}_\cdot,\psi^{(\haC_x)^{\text{c}}}_\cdot$ respectively.
Given $\epsilon\in(0,1/2)$, we introduce the four events,
\begin{align*}
    \{x \text{ is }\epsilon\text{-bad} \}&\defeq
    \left\{\sup_{y\in \tilde\haC_x} \eta^{B_{L/2}}_{y} \geq \epsilon\ba \right\},
    \quad\quad
    &\{x \text{ is }\epsilon\text{-good} \}&\defeq
    \left\{
    \bC_x\stackrel{ \geq (1-\epsilon)\ba}{\longleftrightarrow} (\haC_x)^c
    \right\},\\
    \{\eta^{x} \text{ is }\epsilon\text{-bad} \}&\defeq
    \left\{\sup_{y\in \tilde \tC_x} \eta^{x}_{y} \geq (1-\epsilon)^2\ba \right\} ,
    \quad\quad
    &\{\psi^x \text{ is }\epsilon\text{-bad} \}&\defeq
    \left\{
    \bC_x\stackrel{ \psi^x \geq (1-\epsilon)\epsilon\ba}{\longleftrightarrow} (\tC_x)^c
    \right\} .
\end{align*}

And for $\rho\in(0,1/2)$ the events,
\begin{align}\label{eq:EF_events}
\begin{split}
E(\cC) = E_{L,M,N}^{\rho,\epsilon}(\cC)&\defeq
\big\{
     \exists \tilde{\mathcal{C}} \subset \mathcal{C} \text{ with } |\tilde{\mathcal{C}}| \geq \rho |\mathcal{C}|  
    \text{ s.t. } x\text{ is }\epsilon\text{-bad} \text{ for all } x\in \tilde\cC
\big\} ,\\
F(\cC) = F_{L,M,N}^{\rho,\epsilon}(\cC)&\defeq
\big\{ 
    \exists \tilde{\mathcal{C}} \subset \mathcal{C} \text{ with } |\tilde{\mathcal{C}}| \geq (1-\rho) |\mathcal{C}| 
    \text{ s.t. } x\text{ is }\epsilon\text{-good} \text{ for all } x\in \tilde\cC
\big\},
\end{split}
\end{align}
as well as
\begin{align}\label{eq:EF_events-bis}
\begin{split}
F^1(\cC) = F_{L,M,N}^{1,\rho,\epsilon}(\cC)&\defeq
\big\{
    \exists \tilde{\mathcal{C}} \subset \mathcal{C} \text{ with } |\tilde{\mathcal{C}}| \geq (1-\rho)^2 |\mathcal{C}|  
    \text{ s.t. } \eta^x\text{ is }\epsilon\text{-bad} \text{ for all } x\in \tilde\cC
\big\},\\
F^2(\cC) = F_{L,M,N}^{2,\rho,\epsilon}(\cC)&\defeq
\big\{
    \exists \tilde{\mathcal{C}} \subset \mathcal{C} \text{ with } |\tilde{\mathcal{C}}| \geq (1-\rho)\rho |\mathcal{C}| 
    \text{ s.t. } \psi^x\text{ is }\epsilon\text{-bad} \text{ for all } x\in \tilde\cC
\big\}.
\end{split}
\end{align}
It follows from the definition of the events in \eqref{eq:EF_events}, the Markov property of the field and \eqref{eq:ub_start} that for all $\rho,\varepsilon \in (0,\frac12)$ and $N,M, L$ as above,
\begin{align} \label{eq:ub_three_events-bis}
    \theta(\ba,N)
        &\leq c|\A|\sup_{\cC\in \A}\left(\,\E_N\left[ 
    f^L_{\xi}(\bar{\eta}^L_\xi-\ba)
    \right]
    \P_N^{{L/2}}(F(\cC)) + \E_N\left[ 
    f^L_{\xi}(\bar{\eta}^L_\xi-\ba)
    \1_{E(\cC)}\right]\right) 
 \end{align}
 (recall that $\P_N^{L/2}=\P_N^{\partial B_{L/2}}$).
 Applying the Markov property of the field yet again (under $\P_N^{L/2}$ rather than $\P_N$), it is straightforward to see by \eqref{eq:EF_events-bis} and \eqref{eq:field_decomp} that $\P_N^{{L/2}}(F(\cC)) \leq \sum_i \P_N^{{L/2}}(F^i(\cC))$. Feeding this into \eqref{eq:ub_three_events-bis} and using the Cauchy-Schwarz inequality thus yields that
 \begin{align} \label{eq:ub_three_events}
 \begin{split}
     \theta(\ba,N)
    &\leq c|\A|\sup_{\cC\in \A}\left(
\E_N\big[ 
    f^L_{\xi}(\bar{\eta}^L_\xi-\ba)^2\big]^{\frac12}
    \P_N(E(\cC))^{\frac12} +    \E_N\big[ 
    f^L_{\xi}(\bar{\eta}^L_\xi-\ba)
    \big] \times \sum_i  \P_N^{{L/2}}(F^i(\cC))     \right),
\end{split} 
\end{align}
where we also used the Cauchy-Schwarz inequality for the last line.

Let us now bound the probability
of the events $F^i(\cC)$ and $E(\cC)$
appearing in \eqref{eq:ub_three_events} and give a tail bound of the random variable $\bar\eta^L_\xi$, which will be used to compute the moments of the random variable $f^L_{\xi}(\bar{\eta}^L_\xi-\ba)$. In the sequel we write $t_+=t\vee 0$ for the positive part of $t\in \R$. These quantities  are controlled by the following three Lemmas, the proofs of which will be supplied in Section~\ref{sec:harm_avg}.
\begin{lemma}\label{lem:sup_tail}
For all $L\geq 10\xi$ and $N\geq 2L$, with $\bar{\eta}^L_\xi$ as in \eqref{eq:bareta},
\begin{equation}
    \P_N\left(\bar{\eta}^L_\xi\geq t\ba \right)\leq \exp\left\{-\frac{c}{\log(N/L)}\left(t\ba -\frac{c^\prime\xi}{L}\sqrt{\log(N/L)}\right)_+^2\right\}.
\end{equation}
\end{lemma}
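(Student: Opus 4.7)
The plan is to apply the Borell-Sudakov-Tsirelson (BST) inequality to the a.s.\ continuous centered Gaussian field $(\eta_x^{\partial B_{L/2}})_{x \in \tB_\xi}$ after separately bounding its supremum variance and the mean of its supremum. First I reduce the supremum over $\tB_\xi$ to a maximum over the finitely many lattice vertices in (a slight enlargement of) $B_\xi \cap \Z^2$: along any cable joining neighbouring vertices $u, v \in B_{L/2}\cap\Z^2$, the underlying diffusion is one-dimensional Brownian motion, which reaches $\{u, v\}$ with probability linear in the starting position. Hence $\eta^{\partial B_{L/2}}$ is the linear interpolation of its two endpoint values along the cable, and so attains no new extremum in the cable's interior.

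For the supremum variance, the orthogonal decomposition \eqref{eq:field_decomp} with $K = \partial B_{L/2}$ gives $\text{Var}(\eta_x^{\partial B_{L/2}}) = g_N(x,x) - g_N^{\partial B_{L/2}}(x,x)$; combining \eqref{eq:g_N} with the killed-Green estimate \eqref{eq:killed_green_smallh} applied at $r = L/2$ (together with the identity $g_N^{\partial B_{L/2}}(x,x) = g_N(x,x) - E_x[g_N(X_{H_{\partial B_{L/2}}},x)]$) yields $\sigma^2 := \sup_{x \in \tB_\xi} \text{Var}(\eta_x^{\partial B_{L/2}}) \leq C \log(N/L)$.

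For the expected supremum, set $Z_x := \eta_x^{\partial B_{L/2}} - \eta_0^{\partial B_{L/2}}$ so that $\E[Z_x] = 0$ and $\E[\bar\eta^L_\xi] = \E[\sup_x Z_x]$. The substantive estimate is the quadratic oscillation bound
\begin{equation} \label{eq:plan-osc}
\text{Var}(\eta_x^{\partial B_{L/2}} - \eta_y^{\partial B_{L/2}}) \leq C(|x-y|/L)^2 \log(N/L), \qquad x, y \in B_\xi.
\end{equation}
I would derive \eqref{eq:plan-osc} from the identity $\text{Var}(\eta_x - \eta_y) = \text{Var}(\varphi_x - \varphi_y) - \text{Var}(\psi_x - \psi_y)$ (with $\psi \sim \P_N^{L/2}$), combined with the expansion $K_0(s) = -\log(s/2) - \gamma + O(s^2)$ coming from \eqref{eq:bessel_ub} and the asymptotics \eqref{eq:g2-asymp-unif}: the leading $\frac{4}{\pi}\log|x-y| + O(1)$ contributions to both variances match and cancel, leaving only an $O((|x-y|/L)^2 \log(N/L))$ second-order remainder from the boundary shift. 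With \eqref{eq:plan-osc}, the pseudo-metric $d(x,y) = \sqrt{\text{Var}(Z_x - Z_y)}$ on the 2-dimensional set $B_\xi$ has diameter $D \leq C(\xi/L)\sqrt{\log(N/L)}$ and polynomial covering numbers, so Dudley's entropy inequality yields
\begin{equation*}
\E[\sup_x Z_x] \leq C\int_0^D \sqrt{\log N(B_\xi, d, \varepsilon)}\,d\varepsilon \leq c'(\xi/L)\sqrt{\log(N/L)}.
\end{equation*}
Plugging the bounds on $\sigma^2$ and $\E[\bar\eta^L_\xi]$ into BST, $\P_N(\bar\eta^L_\xi \geq \E[\bar\eta^L_\xi] + u) \leq \exp(-u^2/(2\sigma^2))$, and choosing $u = (t\ba - \E[\bar\eta^L_\xi])_+$ gives the claimed inequality (with the bound trivially holding when $t\ba \leq c'(\xi/L)\sqrt{\log(N/L)}$).

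\textbf{Main obstacle.} The quadratic bound \eqref{eq:plan-osc} is the delicate step. A naive pointwise gradient estimate $|\eta_x - \eta_y| \leq C(|x-y|/L)\|\varphi|_{\partial B_{L/2}}\|_\infty$ would pick up an extra $\sqrt{\log L}$ factor from the boundary maximum of $|\varphi|$, which is too lossy. Extracting the quadratic dependence requires exploiting the mean-zero cancellation of the signed hitting-measure difference $P_x(X_{H_{\partial B_{L/2}}}\in\cdot) - P_0(X_{H_{\partial B_{L/2}}}\in\cdot)$, together with the approximate rotational symmetry of $\partial B_{L/2}$ (which makes the first-moment contribution against the nearly radial function $g_N(\cdot, y)$ vanish to leading order). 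Quantifying this cancellation rigorously via \eqref{eq:g2-asymp-unif} and \eqref{eq:bessel_ub} — notably by extracting the next-to-leading term in the Taylor expansion of the discrete Poisson kernel — is the technically involved piece of the argument.
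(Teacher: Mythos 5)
Your high-level framework -- reduce to lattice vertices, bound the supremum variance by $C\log(N/L)$, bound the expected supremum by $c'(\xi/L)\sqrt{\log(N/L)}$ via a quadratic oscillation estimate and Dudley chaining, then apply Borell--TIS -- matches the paper's structure exactly, and your variance and entropy bounds are the correct shape. The paper organizes the chaining estimate as a general lemma (\eqref{eq:borelltis_supnorm}) with the variance input supplied by Lemma~\ref{lem:var_conditioning}(i), but the content is the same.

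The genuine gap is in your proposed derivation of the oscillation bound \eqref{eq:plan-osc}. You cite \eqref{eq:g2-asymp-unif} and \eqref{eq:bessel_ub} as the inputs, but these are insufficient for the cancellation you need. The asymptotic \eqref{eq:g2-asymp-unif} is \emph{multiplicative}: $g_N(x,y) = \bar g_N(x,y)(1+O(\varepsilon))$ with a fixed $\varepsilon$ once $N$ is large, so the \emph{additive} error in $g_N(x,y)$ is of size $\varepsilon \bar g_N(x,y) \asymp \varepsilon\log N$ near the diagonal. When you form the difference $\mathrm{Var}(\varphi_x-\varphi_y) - \mathrm{Var}(\psi_x-\psi_y)$, these errors do not cancel, and they overwhelm the target $(\xi/L)^2\log(N/L)$ as soon as $\xi/L$ is small. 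Similarly, \eqref{eq:killed_green_smallh} is a two-sided bound with mismatched constants $c \neq c'$, so it cannot produce any additive cancellation at all. You correctly diagnose that the work must be done at the level of the discrete Poisson kernel, and that a naive gradient bound on the field loses a $\sqrt{\log L}$, but you stop short of the actual tool: one needs a Lipschitz/gradient estimate for \emph{discrete harmonic functions}, applied to the harmonic map $z\mapsto P_z(X_{H_{\partial B_{L/2}}}=u)$ itself. This is precisely what the paper imports via the Grigor'yan--Telcs argument as \eqref{eq:harm_grad}, which (combined with Harnack) gives $|h_{x}(u)-h_{y}(u)|\le C\tfrac{|x-y|}{L}h_x(u)$ uniformly in $u\in\partial B_{L/2}$, and hence $\mathrm{Var}(\eta_x-\eta_y)\le C\tfrac{|x-y|^2}{L^2}\mathrm{Var}(\eta_x)$ with no additive slack. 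Without that input -- or an equivalent sharp regularity estimate for the discrete Poisson kernel -- step \eqref{eq:plan-osc} in your plan cannot be completed.
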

Below for $a\geq b \geq 1$ we let $\binom{a}{b}= \binom{\lceil a\rceil}{\lceil b\rceil } \vee \binom{\lceil a\rceil}{\lfloor b\rfloor }$, the latter referring to the usual binomial coefficients.

\begin{lemma} \label{lem:bad_box}
For $L,M,N$ satisfying the conditions in Proposition \ref{prop:coarse_graining}, and all $\cC\in \A^{L,M}_{0,N}$,
\begin{equation}\label{eq:bad_box}
\P_N(E(\cC)) \leq 
c\binom{ n }{  \rho n}
\exp\left\{-\frac{c\log(N/L)}{\log(n)^2}\left(\epsilon\ba - c^\prime \sqrt{\rho n \frac{\log(n)^2}{\log(N/L)}}\right)_+^2\right\}.
\end{equation}
\end{lemma}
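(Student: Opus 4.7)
My approach is to combine a union bound over ``witnessing'' subsets of $\cC$ with a Gaussian concentration estimate applied to a sum of suprema of the harmonic extension field $\eta^{B_{L/2}}$.

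First, since any realisation of $E(\cC)$ contains a sub-collection $\tilde\cC\subset \cC$ of size exactly $\lceil\rho|\cC|\rceil$ with every point $\epsilon$-bad (simply drop points from a witnessing set), a union bound gives
\[
\P_N(E(\cC))\leq \binom{|\cC|}{\lceil\rho|\cC|\rceil}\sup_{\tilde\cC}\P_N\bigl(Y_x\geq \epsilon\ba\text{ for all }x\in\tilde\cC\bigr),
\]
with $Y_x\defeq \sup_{y\in\tilde\haC_x}\eta^{B_{L/2}}_y$ and the supremum over $\tilde\cC$ of the prescribed size. By \eqref{A:cardinality} the binomial is bounded by $c\binom{n}{\rho n}$, furnishing the combinatorial prefactor in \eqref{eq:bad_box}.

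For a fixed $\tilde\cC$, I would use the elementary inclusion $\{Y_x\geq \epsilon\ba\;\forall x\in\tilde\cC\}\subset \{S\geq \epsilon\ba|\tilde\cC|\}$ with $S\defeq \sum_{x\in\tilde\cC}Y_x$, and apply the Borell--TIS inequality to $S$ viewed as a Lipschitz functional of the Gaussian vector $(\varphi_z)_{z\in\partial B_{L/2}}$ to obtain $\P_N(S\geq \mathbb{E}[S]+t)\leq \exp(-t^2/(2\sigma_S^2))$. The two quantitative inputs are: (i) an upper bound $\mathbb{E}[S]\leq C|\tilde\cC|\sqrt{\rho n\log^2(n)/\log(N/L)}$, which follows (with slack in $\sqrt{\rho n}$) from a Dudley chaining argument together with derivative estimates for the harmonic function $\eta^{B_{L/2}}$; and (ii) the Cameron--Martin Lipschitz bound $\sigma_S^2\leq C|\tilde\cC|^2\log^2(n)/\log(N/L)$. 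In both (i) and (ii), the uniform variance estimate $\textnormal{Var}(\eta^{B_{L/2}}_y)\leq C\log^2(n)/\log(N/L)$, valid for $y\in\tilde\haC_x$ thanks to the distance condition \eqref{eq:A_structure} (which places $y$ at distance $\gtrsim ML\log n$ from $\partial B_{L/2}$) and obtained by combining the identity $\textnormal{Var}(\eta^{B_{L/2}}_y)=g_N(y,y)-g_N^{B_{L/2}}(y,y)$ with \eqref{eq:g2-asymp-unif}, \eqref{eq:bessel_ub} and Lemma~\ref{L:RWestimates}, plays a central role. Plugging (i) and (ii) into the Borell--TIS tail with $t=\epsilon\ba|\tilde\cC|-\mathbb{E}[S]$ yields the exponent in \eqref{eq:bad_box}.

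The main obstacle is item (ii): the $|\tilde\cC|^2$ (rather than $|\tilde\cC|$) scaling in $\sigma_S^2$ is delicate and reflects the strong positive correlation among the $Y_x$'s arising from their common dependence on the boundary field $\varphi|_{\partial B_{L/2}}$. I would handle this by a direct Cameron--Martin computation, writing $S(\varphi+h)-S(\varphi)\leq \sum_{x\in\tilde\cC}\sup_{y\in\tilde\haC_x}\eta^{B_{L/2}}_y(h)$ for a Cameron--Martin direction $h$, and using harmonic-function estimates (the maximum principle and smoothness of the hitting distribution from faraway points on $\partial B_{L/2}$) to bound the right-hand side by $|\tilde\cC|$ times a single-box contribution.
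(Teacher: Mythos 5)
Your proposal is correct and follows essentially the same route as the paper. The paper's proof (end of Section~\ref{sec:harm_avg}) also proceeds by (a) a union bound over witnessing subsets $\tilde\cC$ of size $\approx \rho n$, yielding the binomial prefactor via \eqref{A:cardinality}; (b) the inclusion of the intersection event into $\{\sup_{f\in\mathbf F}Z_f\geq\epsilon\ba\}$, which is just your $\{S\geq\epsilon\ba|\tilde\cC|\}$ in the paper's notation $Z_f=\tfrac1{|\tilde\cC|}\sum_{x\in\tilde\cC}\eta^{B_{L/2}}_{f(x)}$ (so $\sup_fZ_f=S/|\tilde\cC|$); (c) Borell--TIS for the supremum of this Gaussian process, with (i) $\E[\sup_fZ_f]$ controlled by the Dudley chaining bound \eqref{eq:borelltis_supnorm}, whose key ingredient is precisely the gradient estimate \eqref{eq:harm_grad} for harmonic functions, and (ii) $\sup_f\E[Z_f^2]$ controlled by Lemma~\ref{lem:var_conditioning}(ii) using \eqref{eq:last_exit}, \eqref{eq:g2-asymp-unif} and \eqref{eq:capB}, i.e.~the bound $\capacity_N(B_{L/2})\times\sup g_N(\cdot,\cdot)^2\leq c\log(n)^2/\log(N/L)$.

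One small remark on your item (ii): you flag the $|\tilde\cC|^2$ scaling of $\sigma_S^2$ as "delicate,'' but this is in fact the automatic output of the supremum form of Borell--TIS (equivalently the Minkowski/triangle-inequality bound $\|\sum_x\eta_{f(x)}\|\leq\sum_x\|\eta_{f(x)}\|$ for the Cameron--Martin norm): for the centred Gaussian process $(Z_f)$ the relevant scale is $\sup_f\E[Z_f^2]$, and multiplying by $|\tilde\cC|$ to pass from $\sup_fZ_f$ to $S$ gives $\sigma_S^2=|\tilde\cC|^2\sup_f\E[Z_f^2]$ with no extra work. The genuinely quantitative input is not the cardinality scaling but the single-box estimate $\sup_f\E[Z_f^2]\leq c\log(n)^2/\log(N/L)$: this hinges on $\capacity_N(B_{L/2})\asymp1/\log(N/L)$ being small together with $g_N$ between the far boxes and $B_{L/2}$ being $\asymp\log n$, which is exactly what the distance condition \eqref{eq:A_structure} buys you.
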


\begin{lemma} \label{lem:harm_avg}
For all $\eta\in(0,1)$, there exists $\Cl[c]{nml_eta}(\eta),\Cl[c]{ml_eta}(\eta)\in(0,1)$ such that for all $L,M,N\geq 1$ satisfying the conditions in Proposition \ref{prop:coarse_graining} and such that $\log(n)/\log(N/L)\leq \Cr{nml_eta}$ and $L/M\leq \Cr{ml_eta}$, and for all $\cC\in \A^{L,M}_{0,N}$, we have (cf.~\eqref{def:C_boxes} regarding $\Sigma(\cdot)$)
\begin{equation}\label{eq:harm_avg}
 \P_N^{{L/2}}(F^1(\cC))  
\leq   c\binom{ n }{  (1-\rho)^2n}\exp\left\{-
\frac{\capacity_N(\Sigma(\tcC))}{2(1+\eta)} \left((1-\epsilon)\ba-\frac{c}{M}\sqrt{\frac{n}{\capacity_N(\Sigma(\tcC))}}\right)_+^2
\right\}.
\end{equation}
\end{lemma}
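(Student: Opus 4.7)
The plan is to prove \eqref{eq:harm_avg} via a Gaussian concentration bound on a suitable linear combination of the harmonic averages $\eta^x_{y_x}$. First, a union bound over sub-collections $\tcC\subseteq\cC$ with $|\tcC|\geq(1-\rho)^2|\cC|$ reduces the problem, producing the binomial prefactor in \eqref{eq:harm_avg}, to bounding, for each fixed such $\tcC$,
\begin{equation*}
\P_N^{L/2}\Big(\bigcap_{x\in\tcC}\big\{\sup_{y\in\tilde\tC_x}\eta^x_y\geq (1-\epsilon)^2\ba\big\}\Big).
\end{equation*}
The key point is that since the boxes $\haC_x=B(x,ML)$ are pairwise disjoint (radius $ML$, centres $16ML$-separated), the joint Gaussian covariance of $(\eta^x)_{x\in\tcC}$ is controlled by the geometry of the centres.

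I would then pass from this joint event to a single supremum by introducing the weights $\alpha_x\defeq e_{\Sigma(\tcC),N}(\bC_x^L)$, which satisfy $\sum_x\alpha_x=\capacity_N(\Sigma(\tcC))$. Since the $\alpha_x$ are non-negative, the joint event implies
\begin{equation*}
\sup_{(y_x)\in\prod_x\tilde\tC_x}\sum_{x\in\tcC}\alpha_x\eta^x_{y_x}\geq(1-\epsilon)^2\ba\cdot\capacity_N(\Sigma(\tcC)),
\end{equation*}
and the left-hand side is the supremum of a centred Gaussian process. Applying the Borell--TIS inequality then reduces matters to estimating (i) the expected value of this supremum and (ii) its maximal pointwise variance. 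For (i), I would use that $\eta^x$ is harmonic inside $\haC_x$ (treating the $\partial B_{L/2}$ killing as a harmless perturbation, since $\Sigma(\tcC)$ lies at distance $\gg L$ from $\partial B_{L/2}$ by \eqref{eq:A_structure}), and therefore smooth at scale $ML$ on the deep interior $\tilde\tC_x$ of radius $2L$: a Poisson-kernel estimate yields $\variance(\eta^x_y-\eta^x_{y'})\leq cM^{-2}\log n$ for $y,y'\in\tilde\tC_x$, whence Dudley's bound gives $\E[\sup_{y\in\tilde\tC_x}\eta^x_y]\leq cM^{-1}\sqrt{\log n}$. Summing against $\alpha_x$ and using $\capacity_N(\Sigma(\tcC))\leq cn/\log n$ (which follows from \eqref{eq:capB} and subadditivity of capacity) produces the correction $cM^{-1}\sqrt{n/\capacity_N(\Sigma(\tcC))}\cdot\capacity_N(\Sigma(\tcC))$ in \eqref{eq:harm_avg}. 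For (ii), I would compare $\mathrm{Cov}(\eta^x_{y_x},\eta^{x'}_{y_{x'}})$ with $g_N(x,x')$: for $x\neq x'$ this covariance equals the expected Green function between the walk exit points from $\haC_x$ and $\haC_{x'}$, and by \eqref{eq:g2-asymp-unif} combined with Lemma~\ref{lem:bessel_cont} applied with $\zeta=O(ML/N)$ and $t=\Theta(|x-x'|/N)$, it agrees with $g_N(x,x')$ up to a multiplicative $(1+\eta/3)$ factor. The last-exit decomposition \eqref{eq:last_exit} then gives the identity $\sum_{x,x'}\alpha_x\alpha_{x'}g_N(x,x')=\capacity_N(\Sigma(\tcC))$, so that the maximal pointwise variance is bounded above by $(1+\eta)\capacity_N(\Sigma(\tcC))$. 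Feeding these two estimates into Borell--TIS delivers \eqref{eq:harm_avg}.

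The main obstacle I expect will be obtaining the sharp prefactor $(1+\eta)$ in the variance comparison, as opposed to an arbitrary constant: $\mathrm{Cov}(\eta^x_{y_x},\eta^{x'}_{y_{x'}})$ involves Green-function values between walk exit points whose radial positions fluctuate on scale $ML$, and since $g_N$ depends only logarithmically on distance at scale $N$, turning the logarithmic stability furnished by Lemma~\ref{lem:bessel_cont} into a near-optimal factor requires precisely the separation of scales encoded in the hypothesis $\log(n)/\log(N/L)\leq\Cr{nml_eta}$. The complementary condition $L/M\leq\Cr{ml_eta}$ plays its role in ensuring that the diagonal contribution $\sum_x\alpha_x^2\variance(\eta^x_{y_x})$, which involves the large self-variance $\variance(\eta^x_{y_x})\asymp\log n$, remains a lower-order correction relative to the off-diagonal terms that would otherwise spoil the $(1+\eta)$ factor.
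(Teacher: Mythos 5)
Your proposal is substantially correct and follows essentially the same route as the paper's proof: union bound over sub-collections $\tcC$ for the binomial prefactor; pass to a sup of a Gaussian linear form via the pointwise observation $\bigcap_x\{\sup_y \eta^x_y\geq a\}\subset\{\sup_{(y_x)}\sum_x\alpha_x\eta^x_{y_x}\geq a\sum_x\alpha_x\}$; then Borell--TIS, with the expectation of the sup controlled by harmonicity at scale $ML$ (the paper's gradient estimate \eqref{eq:harm_grad}) and Dudley's entropy bound (equivalent to the paper's \eqref{eq:borelltis_supnorm}, which does the Dudley step jointly in the variable $f\in\mathbf{F}$ rather than box-by-box as you do — the two are interchangeable here since the sup over the product of boxes decomposes into a sum of sups), and the variance controlled by comparing $g_N$ across well-separated points using Lemma~\ref{lem:bessel_cont} (this is exactly the paper's $\alpha_{M,L}$ in Lemma~\ref{lem:supvar_harm_avg}).

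Two small slips worth flagging. First, $\sum_{x,x'}\alpha_x\alpha_{x'}g_N(x,x')=\capacity_N(\Sigma(\tcC))$ is \emph{not} an identity when the $\bC_x$ have radius $L>0$: the exact identity from \eqref{eq:last_exit} reads $\sum_{y,y'\in\Sigma(\tcC)}e_{\Sigma,N}(y)e_{\Sigma,N}(y')g_N(y,y')=\capacity_N(\Sigma(\tcC))$, so replacing $g_N(y,y')$ for $y\in\bC_x$, $y'\in\bC_{x'}$ by $g_N(x,x')$ costs another multiplicative $K_0$-ratio factor of the same flavor as the one you already invoke for the covariance comparison; the paper's Lemma~\ref{lem:supvar_harm_avg} sidesteps this by comparing $g_N(f(x),f(x'))$ directly to the $g_N(u,u')$ that appear in the exact last-exit identity. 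This extra comparison goes through (same separation of scales), but it is a step you omitted. Second, your attribution of the two hypotheses is reversed: in the paper it is the condition $\log(n)/\log(N/L)\leq\Cr{nml_eta}$ that keeps the diagonal term $\sum_x\mu(x)^2\sup_u g_N(f(x),u)\leq\tfrac{c\log n}{\log(N/L)}\cdot\capacity_N(\Sigma(\cC))^{-1}$ lower order (cf.\ \eqref{sup_var_ondiag}), not the $L/M$ condition. Neither slip affects the viability of your outline.
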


The outstanding bound on $ \P_N^{{L/2}}(F^2(\cC))$ is an easier matter: it can be supplied directly, as a consequence of our a-priori estimate from Section~\ref{sec:nearcrit}. Indeed, 
y \eqref{def:C} and the sentence following \eqref{eq:field_decomp}, $(\psi^x)_{x\in\tcC}$ are mutually independent Gaussian fields, and hence by \eqref{eq:ub_apriori} and translation invariance, we have by Lemma \ref{lem:apriori} that
\begin{align} \label{eq:ub_local_field}
\begin{split}
  \P_N^{{L/2}}(F^2(\cC))
 &\leq c\binom{ n }{  \rho(1-\rho) n} \left(\P_N^{ML}\big(\partial B_L\stackrel{ \geq (1-\epsilon)\epsilon\ba}{\longleftrightarrow}\partial B_{2L} \big)\right)^{ (1-\rho)\rho n}\\
  &\leq c\binom{ n }{  \rho(1-\rho) n} \exp\left\{-c\frac{\epsilon \rho n \ba^2}{\log(M)}\right\}.
\end{split}
\end{align}
We now carefully choose the scales $L$ and $M$ and assemble the preceding three estimates. We pick
\begin{equation}\label{eq:ML_choice_ub}
    L=\frac{N}{a^2\log(N)}\quad\text{and}\quad M=\frac{a^2\log(N)}{\log(a^2\log(N))^2} \text{ (hence $n=\log(a^2\log(N))^2$)}.
\end{equation}
First note the condition $L\geq 10\xi$ and the conditions in Proposition \ref{prop:coarse_graining} and Lemma \ref{lem:harm_avg} are met by above choices, since the quantities $M,L/\xi=\frac{e^{a^2g_N}}{a^2\log(N)},n,\frac{\log(L)}{\log(n)}$ and $M/L$ can all be made arbitrarily large by making $a^2\log(N)$ large.
By the elementary inequality $\binom{n}{k}\leq \exp\{ck\log\big((n/k)\vee 2\big)\}$ and \eqref{A:entropy}, the binomial coefficients appearing in \eqref{eq:bad_box}, \eqref{eq:harm_avg} and \eqref{eq:ub_local_field} are all at most
\begin{equation} \label{eq:final_entropy}
    \exp\big\{cn\log\big(\rho^{-1}(1-\rho)^{-1}\big)\big\}
    \leq |\A| \leq \exp\big\{\Cr{entropy_A} \log(a^2\log(N))^3\big\},
\end{equation}
given that $\log(M)\geq c\log\big(\rho^{-1}(1-\rho)^{-1}\big)$, which can be achieved by taking $a^2\log(N)>c(\rho)$. We now control the moments of $f^L_{\xi}(\bar{\eta}^L_\xi-\ba)$ (cf.~\eqref{eq:f} regarding $f_r^a(\cdot)$) that appear in \eqref{eq:ub_three_events} for the  choice of $L$ in \eqref{eq:ML_choice_ub} using Lemma~\ref{lem:sup_tail}. For all $k\geq 1$,  we get that
\begin{align*}
    \E_N\big[\big(\bar{\eta}^L_\xi \big)^k \1\big\{\bar{\eta}^L_\xi\geq \ba\big\}\big]
    &=\int_0^\infty 
    \P_N\big(\big(\bar{\eta}^L_\xi \big)^k \1\big\{\bar{\eta}^L_\xi\geq \ba\big\} \geq x\big)\,dx\\
    &= \ba^k \P_N\left(\bar{\eta}^L_\xi\geq \ba\right) + \int_{\ba^k}^\infty 
    \P_N\left(\bar{\eta}^L_\xi  \geq x^{1/k}\right)\,dx\\
    &\leq  \ba^k e^{-\frac{c\ba^2}{\log(N/L)}} + k\ba^k\int_{1}^\infty 
    t^{k-1}\P_N\big(\bar{\eta}^L_\xi  \geq t\ba\big)\,dt\\
    &\leq  \ba^k e^{-\frac{c\ba^2}{\log(\ba)}} + k\ba^k\int_{1}^\infty 
    t^{k-1}e^{-\frac{ct^2\ba^2}{\log(\ba)}}\,dt\leq c,
\end{align*}
where we performed the change of variable $x^{1/k}=t\ba$ in the first inequality.
Using the above display and fact that $\log(L/\xi)/\log(L)\asymp a^2$ (which follows from \eqref{eq:ML_choice_ub}), we see that
\begin{align}\label{eq:inner_shift_moments}
\begin{split}
\E_N\big[f^L_{\xi}(\bar{\eta}^L_\xi-\ba)^k \big]
&\leq (c\,a^2)^{k/2}+\big(\log(L)\log(L/\xi)\big)^{-k/2} \E_N\big[\big(\bar{\eta}^L_\xi \big)^{2k} \1\big\{\bar{\eta}^L_\xi\geq \ba\big\}\big]\\
&\leq (c\,a^2)^{k/2}+(ca^2\log(N)^2)^{-k/2} \leq ca^k.
\end{split}
\end{align}

We now explain how \eqref{eq:ML_choice_ub} and \eqref{A:capacity} lead to the precise constant $\frac{1}{2} \capacity_{\R^2}^{\tau}\big([0,1]\big)$ appearing in \eqref{eq:prop_ub_asymp}. By \eqref{eq:ML_choice_ub}, we have that $n^{-1}\log(M\log(n))\leq c\log(a^2\log(N))^{-1}$. Hence by this inequality,
\eqref{A:capacity}, \eqref{eq:EF_events} and picking $\rho<c(\eta)$ small enough such that $(1-\rho)^2>(1-\Cr{cg_rho}(\eta))$, the capacity functional appearing in \eqref{eq:harm_avg} is lower bounded by
\begin{equation} \label{eq:final_cap}
    \left(c\frac{1}{n} \log\left(n\right)^2 + \frac{1}{n}\log\left(M\log n \right)\log n + \frac{1+\eta}{\capacity_{\R^2}^{\tau} ([0,1])}\right)^{-1}\geq \capacity_{\R^2}^{\tau} ([0,1])(1+2\eta)^{-1},
\end{equation}
where we also used the fact that $a^2\log(N)\geq c(\eta)$. 

To conclude, by picking $\epsilon<c(\eta),\rho<c^\prime(\eta)$ small enough, taking $a^2\log(N)\geq \Cr{ub}(\eta)$ with $\Cr{ub}(\eta)$ sufficiently large,
combining \eqref{eq:ML_choice_ub}, \eqref{eq:ub_three_events}, \eqref{eq:bad_box}, \eqref{eq:harm_avg}, \eqref{eq:final_entropy}, \eqref{eq:inner_shift_moments} and \eqref{eq:final_cap}, we obtain - after a change of variables in $\eta$, that
\begin{align*}
\begin{split}
\theta(\ba,N)&\leq  c\,a \exp\big\{2\Cr{entropy_A} \log(a^2\log(N))^3\big\}\\
&\quad \times\Bigg(\exp\!\left\{\!-\frac{\ba^2\capacity_{\R^2}^{\tau} ([0,1])}{2(1+\eta)}\!\right\}    
+ \exp\!\left\{\!-c\rho\epsilon\log(a^2\log(N))\ba^2\!\right\}+\exp\!\left\{\!-\frac{c\epsilon^2\ba^2\log(a^2\log(N))}{\log(\log(a^2\log(N)))^2}\!\right\}
\Bigg)\\
&\leq \theta(0,\xi)\exp\left\{\!-(1-\eta)\frac{1}{2} \capacity_{\R^2}^{\tau}\big([0,1]\big)\log(N/\xi)\!\right\},
\end{split}
\end{align*}
where we also used \cite[Theorem 1.1]{rz25a} together with $\log(N/\xi)=\ba^2$ and the bound $\theta(0,\xi)\geq c(\log(N/\xi)/\log(N))^{1/2}{\geq ca}$ to obtain the final inequality.
\end{proof}

\section{Porous tubes in two dimensions} \label{sec:tube}

In this section, we seek estimates on the capacity of ``porous tubes'' with explicit constant. These estimates, which concern the random walk $X$ on $\Z^2 \cup \{\Delta \}$ introduced in Section~\ref{sec:facts}, will naturally come up in the proof of \eqref{A:capacity} in the next section.  A useful picture to keep in mind is that given $N\geq P\geq 1$, we place $P$ blocks of scale $N/P$ along a path of length $N$.  In particular, the usual line segment of length $N$ can be thought of as a porous tube with $P=N$.

 For the time being, ``porous tubes''  are simply sets of the forms $\bigcup_{i\in A} S_i$, where $A$ is some indexing set and $S_i$ are sufficiently visible for $X$. Results of similar flavour were obtained in \cite{goswami_radius_2022} on $\Z^d$, $d \geq 3$ and later improved and generalized to a class of transient graphs in \cite{prevost_first_2025}. In the case of $\Z^2$, capacity bounds were obtained in \cite{rz25a} for a coarser notion of ``porosity''. We refine these below, essentially by keeping more careful track of proximity between sets $S_i$ (parametrized by $\delta$). This is key in order to obtain the sharp constant in \eqref{eq:thm-asymp}. Recall the (killed) Brownian capacity $\capacity_{\R^2}^{\tau}(\cdot)$ introduced in \eqref{eq:cap_time_change_bm}. We write $|\cdot|$ for the Euclidean distance and $d(A,B)=\inf_{a\in A, b\in B}|a-b|$.

\begin{proposition} \label{prop:tube}
For all $\epsilon,\delta\in(0,1)$, there exists $\Cl[c]{tube_nP}\in(0,1),\Cl[c]{tube_lb_diag}>0$ and $\Cl[c]{tube_N}<\infty$, all depending on $\epsilon,\delta$, such that for all $N \geq \Cr{tube_N}$, integers $P\in[\Cr{tube_N},N],A\subset \{1,\dots,P\}$ with $n=|A|$ satisfying $\Cr{tube_nP}P\leq n\leq P$, and sets $(S_i)_{i\in A} \subset  B_{2N}$, the following hold:
    \begin{enumerate}[label={(\roman*)}]
    \item if $d(S_i,S_j)\geq (|i-j|-\delta)N/P$ for all $i\neq j\in A$ and $\capacity_N(S_i)\geq \kappa$ for each $i\in A$ and some $\kappa>0$,
        \begin{align}
            \capacity_N\bigg(\bigcup_{i\in A} S_i \bigg)
            &\geq 
            \left[ \big({\Cr{tube_lb_diag}\kappa P}\big)^{-1}+(1+\varepsilon)\big({\capacity_{\R^2}^{\tau}([0,1])}\big)^{-1} \right]^{-1};
          \label{eq:tube_lb_macro}
        \end{align}
    \item if $ |x_1-x_2|\leq (|i-j|+1/\delta)N/P$
     for all $i,j\in A,x_1\in S_i$, $x_2\in S_j$, 
        \begin{align}
            \capacity_N\bigg(\bigcup_{i\in A} S_i \bigg)
            &\leq (1+\varepsilon)\, {\capacity_{\R^2}^{\tau}([0,1])}.
        \label{eq:tube_ub_macro}
            \end{align}
    \end{enumerate}
\end{proposition}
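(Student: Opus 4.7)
The plan is to leverage the variational formula \eqref{eq:cap_time_change_bm} together with the Green's-function approximation \eqref{eq:g2-asymp-unif}, which, after rescaling $x \mapsto x/N$, reduces both assertions to Brownian $K_0$-energy estimates for probability measures supported in a thin neighbourhood of the unit segment in $\R^2$. The choice of test measure is dictated by which direction of the variational principle we exploit: part (ii) asks for a lower bound on the energy of \emph{every} probability measure on the set, while part (i) asks for a probability measure of small energy.

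For the upper bound (ii), taking $i=j$ forces $\diam(S_i) \leq N/(\delta P)$, while general $i, j$ give $\diam(\bigcup_{i \in A} S_i) \leq (1 + 1/(\delta P))N$. Together these confine the rescaled union $\tilde K \defeq (\bigcup_{i \in A} S_i)/N \subset \R^2$ to the $1/(\delta P)$-thickening of a skeletal curve of total extent $1 + O(1/(\delta P))$. Applying \eqref{eq:g2-asymp-unif} in the variational principle and rescaling gives
\begin{equation*}
\capacity_N\Bigl(\bigcup_{i \in A} S_i\Bigr) \leq (1-\epsilon/2)^{-1}\, \capacity_{\R^2}^{\tau}(\tilde K).
\end{equation*}
It then remains to show $\capacity_{\R^2}^{\tau}(\tilde K) \leq (1+\epsilon/4)\, \capacity_{\R^2}^{\tau}([0,1])$ for $P \geq \Cr{tube_N}(\epsilon,\delta)$ large. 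This follows from monotonicity of capacity combined with upper semicontinuity of $\capacity_{\R^2}^{\tau}$ along decreasing sequences of compact sets: sandwich $\tilde K$ inside a thin rectangle around (a suitable rotation of) $[0,1]$, and use that this rectangle shrinks to $[0,1]$ in Hausdorff distance as $P \to \infty$.

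For the lower bound (i), I would fix a probability measure $\tilde\nu$ on $[\eta, 1-\eta] \subset (0,1)$ with bounded density and
\begin{equation*}
\tfrac{2}{\pi}\int\!\!\int K_0(2|s-t|)\, d\tilde\nu(s)\, d\tilde\nu(t) \leq (1+\epsilon/4)\, \capacity_{\R^2}^{\tau}([0,1])^{-1},
\end{equation*}
obtained by approximating the Brownian equilibrium measure of $[0,1]$ (whose $K_0$-singularity is only logarithmic). With $I_i = [(i-1)/P, i/P]$, $Z_A = \sum_{j \in A} \tilde\nu(I_j)$, $\hat\nu_i = \tilde\nu(I_i)/Z_A$, and letting $\bar e_{S_i}$ be the equilibrium probability measure of $S_i$ under $\capacity_N$, I would test the variational principle against $\mu = \sum_{i \in A} \hat\nu_i\, \bar e_{S_i}$. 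Choosing $\Cr{tube_nP}$ close to $1$ ensures $Z_A \geq 1 - \epsilon/4$ and $\hat\nu_i \leq C(\epsilon)/P$. The $i=j$ contribution is $\sum_i \hat\nu_i^2/\capacity_N(S_i) \leq C/(\kappa P)$, giving the first term in \eqref{eq:tube_lb_macro}. For $i \neq j$, combining \eqref{eq:g2-asymp-unif}, the separation $|x-y| \geq (|i-j|-\delta)N/P$, and monotonicity of $K_0$ bounds each inner integral by $(1+\epsilon/4) \tfrac{2}{\pi} K_0(2(|i-j|-\delta)/P)$; Lemma~\ref{lem:bessel_cont} absorbs the $\delta/P$ shift and the resulting Riemann sum is matched to the continuous integral for $\tilde\nu$.

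The principal obstacle is this Riemann-sum comparison in the off-diagonal: since $K_0(2t) \sim \log(1/t)$ near $0$, pairs with $|i-j| = O(1)$ contribute individually of order $\log P$, and Lemma~\ref{lem:bessel_cont} degenerates in that regime. Keeping $\tilde\nu$ supported on $[\eta, 1-\eta]$ with bounded density caps their cumulative mass by $O(P^{-1} \log P)$, which is negligible against $\capacity_{\R^2}^{\tau}([0,1])^{-1}$ once $P \geq \Cr{tube_N}(\epsilon,\delta)$ is large. A secondary issue is the mismatch $|A| < P$, which is absorbed into the normalization $Z_A$ by choosing $\Cr{tube_nP}$ sufficiently close to $1$.
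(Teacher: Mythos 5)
Your treatment of part (i) is essentially the paper's own argument (Lemma~\ref{lem:tube_2d}(i)): test with a measure $\mu = \sum_i \hat\nu_i\,\bar e_{S_i}$ whose weights come from a smoothed approximant to the Brownian equilibrium measure of $[0,1]$, split into diagonal, near-diagonal and far contributions, and invoke \eqref{eq:g2-asymp-unif} together with Lemma~\ref{lem:bessel_cont} for the far pairs. Up to cosmetic differences (your bounded density $\tilde\nu$ on $[\eta,1-\eta]$ versus the paper's $f_\varepsilon$ with a small $t$-cutoff), this is the proof.

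Part (ii) has a genuine gap. The hypothesis $|x_1-x_2| \le (|i-j|+1/\delta)N/P$ is only an \emph{upper} bound on pairwise distances in terms of index difference; it imposes no straightness and no lower bound. It therefore does \emph{not} confine the rescaled union to a thin neighbourhood of a rotated copy of $[0,1]$. Concretely, take $\delta=1/2$, $P=100$ and single-point sets with $S_i=\bigl((i-1)N/P,\,0\bigr)$ for $i\le 50$ and $S_i=\bigl(49N/P,\,(i-50)N/P\bigr)$ for $i>50$: a routine check shows every pair satisfies $|x_1-x_2| \le (|i-j|+2)N/P$, yet $\bigcup_i S_i$ is an L-shape whose two arms are perpendicular, each of length roughly $N/2$; its minimal bounding rectangle has both sides of order $N$, nowhere near a thin tube. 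So the ``sandwich inside a thin rectangle and use upper semicontinuity'' step has no foundation — the argument would have to handle arbitrarily folded configurations, and no single containing compact set exists to which monotonicity could be applied.

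The fix, and what the paper does (around \eqref{eq:nu-def}--\eqref{eq:lb-cont-dis}), is to transfer the variational problem through the \emph{index} rather than through space. Given any probability measure $\mu$ on $\bigcup_i S_i$, push it forward to $\bar\mu$ on $[0,1]$ by sending all of $S_i$ to the single point $i/P$. The hypothesis gives $|x_1-x_2|/N \le (|i-j|+1/\delta)/P$, so by monotonicity of $K_0$ and Lemma~\ref{lem:bessel_cont} one gets $K_0(2|i-j|/P) \le (1+\epsilon')\,K_0(2|x_1-x_2|/N)$ away from the diagonal; combined with \eqref{eq:g2-asymp-unif}, the $K_0$-energy of $\bar\mu$ on $[0,1]$ is at most $(1+\epsilon')$ times the $g_N$-energy of $\mu$. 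Since the former is bounded below by $\bigl(\capacity_{\R^2}^{\tau}([0,1])\bigr)^{-1}$ for every $\bar\mu$, the claim follows with no geometric confinement at all. This is the crucial step your proposal is missing.
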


We now prepare for the proof of Proposition~\ref{prop:tube}, and first recall some background from potential theory associated to the capacity $\capacity_{\R^2}^{\tau}(\cdot)$ from \eqref{eq:cap_time_change_bm}. It follows from %\cite[Section 6 p.431]{aronszajn_theory_1961} \PF{check} 
\cite[Theorem 4.9, p.76]{sznitman_brownian_1998} that the minimisation problem in (\ref{eq:cap_time_change_bm}) has a unique minimiser $\mu^*\in\mathcal{P}([0,1])$.%, 
We now proceed to approximate by a probability measure that has a continuous density with respect to the Lebesgue measure on $[0,1]$. 
Let $\mathbf{P}_z$, $z \in \R^2$ denote the law of the Brownian motion $B_{\cdot}$ introduced above \eqref{eq:cap_time_change_bm}, and let $T_0=\inf\{ t \geq 0: B_t \in [0,1]\}$. By \cite[(4.38)]{sznitman_brownian_1998}, applied with $\lambda=1$, one knows that $\mu^*$ has the explicit representation
\begin{equation}
\label{eq:mu_*}
\mu^*(dz) = \int_{\R^2} \mathbf{E}_y\big[ \exp\{-T_0\}, \, Z_{T_0} \in d z \big] dy.
\end{equation}
By virtue of \eqref{eq:mu_*}, and splitting $B_{\cdot}$ into horizontal and vertical components $(B_{\cdot}^1,B_{\cdot}^2)$, which are independent one-dimensional Brownian motions, considering successive return times $R_k$ to $[0,1]$ and departure times $D_k$ from $[-c\varepsilon,1+c\varepsilon]$, $k \geq 1$ for $B^1$, noting that $T_0$ must occur during some time interval $[R_k,D_k]$, and exploiting various explicit laws for one-dimensional Brownian motion, one arrives at the following:
 for a given $\varepsilon >0$ as appearing in the statement of Proposition~\ref{prop:tube}, there exists $f_{\varepsilon}: [0,1]\to \R_+$ continuous such that 
\begin{equation} \label{eq:bessel_tilted_energy}
 \frac2{\pi}   \int_0^1 \int_0^1  K_0\big({2}|x-y|\big) f_\varepsilon(x)f_\varepsilon(y) \,dx\,dy
    \leq{(1+\epsilon)} \big(\capacity_{\R^2}^{\tau}([0,1])\big)^{-1}.
\end{equation}
For $n \geq 1$ and  $i\in\{1,\dots,n\}$, let $Z_n= \frac{1}{n}\sum_{i=1}^{n}  f_\varepsilon\left(\frac{i}{n}\right)$. By a Riemann sum argument, for $n \geq C(\varepsilon)$ we have $Z_n^{-1}\leq 1+\epsilon$. Let
\begin{equation} \label{eq:discretised_bessel_em}
    \mu_n\defeq  \frac{1}{Z_n n} \sum_{i=1}^n  f_\varepsilon\left(\frac{i}{n}\right)\delta_{\frac in}.
\end{equation}
It is easy to see that $\mu_n\in\mathcal{P}([0,1])$ and $\mu_n$ converges weakly to the measure with density $f_\varepsilon$ as $n\to\infty$.
\medskip

The key to Proposition~\ref{prop:tube} is the following result. For $\mu$ a probability measure with finite support in $\Z^2$, let $\langle \mu, G_N \mu \rangle= \sum_{x,y} g_N(x,y) \mu(x)\mu(y)$.

\begin{lemma} \label{lem:tube_2d} Under the assumptions of Proposition~\ref{prop:tube}, abbreviating $S =\bigcup_{i\in A} S_i$, the following hold:
\begin{enumerate}[label={(\roman*)}]
    \item if {$d(S_i,S_j)\geq (|i-j|-\delta)N/P$} for all $i\neq j\in A$ and $\capacity_N(S_i)\geq \kappa$ for each $i\in A$ and some $\kappa>0$,
        \begin{align}
        \inf_{\mu\in\mathcal{P}(S)} \langle \mu, G_N \mu \rangle
        &\leq 
        \frac{1}{\Cr{tube_lb_diag}\kappa P}+\frac{1+\epsilon}{\capacity_{\R^2}^{\tau}([0,1])}; 
        \label{eq:tube2_lb_macro}
        \end{align}
    \item if $|x_1-x_2|\leq (|i-j|+1/\delta)N/P$
    for all $i,j\in A,x_1\in S_i$ and $x_2\in S_j$,
        \begin{align}
        \inf_{\mu\in\mathcal{P}(S)} \langle \mu, G_N \mu \rangle
        &\geq 
        \frac{1-\epsilon}{\capacity_{\R^2}^{\tau}([0,1])}.
        \label{eq:tube2_ub_macro}
        \end{align}
\end{enumerate}
\end{lemma}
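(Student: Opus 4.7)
The plan is to tackle parts (i) and (ii) separately, by relating the discrete energy $\langle \mu, G_N\mu\rangle$ to the continuous Bessel energy $\frac{2}{\pi}\int\int K_0(2|x-y|)\,d\nu(x)\,d\nu(y)$ on $[0,1]$, whose infimum equals $(\capacity_{\R^2}^{\tau}([0,1]))^{-1}$ and is nearly attained by the density $f_\varepsilon$ of \eqref{eq:bessel_tilted_energy}.

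For part (i), I would take the test measure $\mu = \sum_{i\in A}\alpha_i\tilde e_i\in\mathcal{P}(S)$, where $\tilde e_i = e_{S_i,N}/\capacity_N(S_i)$ is the normalised equilibrium measure on $S_i$ and $\alpha_i = f_\varepsilon(i/P)/Z'$ with $Z' = \sum_{j\in A}f_\varepsilon(j/P)$. The diagonal contribution $\sum_i\alpha_i^2/\capacity_N(S_i)$ is at most $\kappa^{-1}\max_i\alpha_i \leq c/(\kappa P)$, using $\|f_\varepsilon\|_\infty < \infty$ and the bound $Z'\gtrsim P$ afforded by picking $\Cr{tube_nP}$ close to $1$ (depending on $\varepsilon$); this yields the first term of \eqref{eq:tube2_lb_macro}. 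For $i \neq j$, the hypothesis $d(S_i,S_j)\geq(|i-j|-\delta)N/P$, monotonicity of $K_0$ and \eqref{eq:g2-asymp-unif} give $g_N(x,y)\leq (1+\varepsilon)\tfrac{2}{\pi}K_0(2(|i-j|-\delta)/P)$ for $x\in S_i,\,y\in S_j$, which Lemma~\ref{lem:bessel_cont} upgrades at cost $(1+\varepsilon)$ to $(1+\varepsilon)^2\frac{2}{\pi}K_0(2|i-j|/P)$ when $|i-j|\geq C(\varepsilon,\delta)$; the near-diagonal pairs ($|i-j|\leq C(\varepsilon,\delta)$) contribute only $O(\log P/P)=o(1)$ since $\alpha_i\alpha_j\lesssim 1/P^2$ and $K_0\lesssim\log P$ in that range. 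A Riemann-sum argument then converts $\sum_{i\neq j\in A}\alpha_i\alpha_jK_0(2|i-j|/P)$ into $\int_0^1\int_0^1 K_0(2|x-y|)f_\varepsilon(x)f_\varepsilon(y)\,dx\,dy$, up to further $(1+\varepsilon)$-errors from the discrepancy between summing over $A$ versus $\{1,\dots,P\}$ (absorbed by pushing $\Cr{tube_nP}$ closer to $1$); \eqref{eq:bessel_tilted_energy} then delivers the second term of \eqref{eq:tube2_lb_macro} after relabelling $\varepsilon$.

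For part (ii), given an arbitrary $\mu\in\mathcal{P}(S)$, I set $m_i=\mu(S_i)$ and introduce $\bar\mu\in\mathcal{P}([0,1])$ with density $Pm_i$ on $J_i=[(i-1)/P,i/P]$ for $i\in A$. The variational characterisation then yields $\frac{2}{\pi}\int\int K_0(2|x-y|)\,d\bar\mu\,d\bar\mu\geq(\capacity_{\R^2}^{\tau}([0,1]))^{-1}$. On the other hand, the closeness hypothesis $|x_1-x_2|\leq(|i-j|+1/\delta)N/P$, monotonicity of $K_0$ and \eqref{eq:g2-asymp-unif} provide $g_N(x,y)\geq (1-\varepsilon)\frac{2}{\pi}K_0(2(|i-j|+1/\delta)/P)$ for $x\in S_i,\,y\in S_j$, hence $\langle\mu,G_N\mu\rangle\geq(1-\varepsilon)\frac{2}{\pi}\sum_{i,j}m_im_jK_0(2(|i-j|+1/\delta)/P)$. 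A term-by-term comparison with $\sum_{i,j}m_im_jP^2\int_{J_i}\int_{J_j}K_0(2|x-y|)\,dx\,dy$ finishes the proof: for $|i-j|\geq C(\varepsilon,\delta)$ Lemma~\ref{lem:bessel_cont} shows the discrete term dominates the integral up to $(1-\varepsilon)$, while for $|i-j|\leq C(\varepsilon,\delta)$ (notably $i=j$) both quantities are of order $\log P$ via direct estimates using \eqref{eq:bessel_ub}, and their ratio tends to $1$ as $P\to\infty$. Combining these then gives \eqref{eq:tube2_ub_macro}.

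The main obstacle is controlling the near-diagonal regime, especially the singular diagonal contribution $P^2\int_{J_i\times J_i} K_0(2|x-y|)\,dx\,dy \sim \log P$ that appears in part (ii): to preserve the sharp constant $\capacity_{\R^2}^{\tau}([0,1])$ in the final bound I must track all $(1\pm\varepsilon)$ factors carefully and verify that this singular piece matches the discrete contribution $m_i^2 K_0(2/(\delta P))\sim m_i^2\log P$ up to $(1+o(1))$, via a direct computation using \eqref{eq:bessel_ub} and the asymptotic $\log(\delta P)/\log P\to 1$. Choosing $\Cr{tube_nP}$ close to $1$ and $\Cr{tube_N}$ large (both depending on $\varepsilon,\delta$) will then absorb all residual corrections.
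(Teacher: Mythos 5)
Your overall strategy coincides with the paper's: discretise towards the Bessel energy on $[0,1]$, use the tilted measure built from $f_\varepsilon$ as a test measure in (i), and push an arbitrary measure to $[0,1]$ in (ii). For part (i) the two proofs are essentially the same, but your splitting threshold is slightly off. You remove only pairs with $|i-j|\le C(\varepsilon,\delta)$ and then claim the rest is a Riemann sum converging to $\int\int K_0 f_\varepsilon f_\varepsilon$; however the remaining sum still includes terms with $|x-y|=|i-j|/P$ as small as $C/P\to 0$, where $K_0$ is unbounded, so the $(1+\varepsilon)$-accurate Riemann approximation you invoke is not justified as stated. The paper instead splits at $|i-j|\asymp t n$ with $t=t(\varepsilon)$ a \emph{fixed} macroscopic threshold, so the Riemann-sum integrand $K_0(2|x-y|)\1_{\{|x-y|>t\}}$ is \emph{bounded} (by $K_0(2t)$), and separately shows the contribution of $1\le|i-j|\le tn$ is $O(t\log(1/t))$. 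Your scheme is repairable by inserting this intermediate range, but as written the near-diagonal control is incomplete.

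For part (ii) your route is genuinely different. You build a piecewise-constant density $\bar\mu = P m_i$ on $J_i$ and then attempt a term-by-term comparison over \emph{all} pairs $(i,j)$, in particular having to match the singular diagonal quantity $P^2\int_{J_i\times J_i}K_0(2|x-y|)\sim\log P$ against the discrete contribution $m_i^2 K_0(2/(\delta P))\sim m_i^2\log P$ up to $(1+o(1))$. This can be pushed through (both sides are $\log P+O_\delta(1)$, and the ratio tends to $1$ for $P\ge \Cr{tube_N}(\varepsilon,\delta)$), but it requires careful bookkeeping precisely where you flag the "main obstacle''. The paper sidesteps all of this by a cleaner device: it replaces $\bar\mu$ by an \emph{atomic} measure $\sum_i m_i\delta_{i/P}$ and, crucially, observes that for any fixed $\varepsilon$ one may choose $t=t(\varepsilon)$ so that the \emph{truncated} energy $\tfrac1\pi\int\int K_0(2|x-y|)\1_{\{|x-y|>t\}}\,d\bar\mu\,d\bar\mu$ already exceeds $(1-\varepsilon)(\capacity_{\R^2}^{\tau}([0,1]))^{-1}$ (see \eqref{eq:lb-cont}). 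After truncation only pairs with $|i-j|>tP$ survive, and for those the discrete-to-continuous comparison via Lemma~\ref{lem:bessel_cont} and \eqref{eq:g2-asymp-unif} is immediate; the singular diagonal never appears. Your argument is correct in substance, but trading the truncation for the term-by-term diagonal analysis makes the sharp-constant bookkeeping heavier than it needs to be.
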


Once Lemma~\ref{lem:tube_2d} is shown, Proposition~\ref{prop:tube} follows immediately since the infimum in \eqref{eq:tube2_lb_macro}-\eqref{eq:tube2_ub_macro} equals $(\capacity_N(S))^{-1}$ by a variational principle analogous to  \eqref{eq:cap_time_change_bm}.

\begin{proof}
 We first show \eqref{eq:tube2_lb_macro}. To obtain an upper bound, we consider measures $\mu\in \cP(S)$ of the form
\begin{equation} \label{eq:tube_meas}
    \mu(x)= \frac{1}{n}\sum_{i=1}^n f\left(\frac{i}{n}\right)\bar{e}_{N,S_i}(x),
\end{equation} 
where $f$ is a free to choose, nonnegative, bounded and continuous function on $[0,1]$ such that $\sum_{x\in S}\mu(x)=1$, and $\bar{e}_{N,S_i}(\cdot)= \capacity_N(S_i)^{-1}e_{N,S_i}(\cdot)$ denotes normalized equilibrium measure, a probability measure on $S_i$.
It follows upon splitting the sum over $i,j\in \{1,\dots,n\}$ below over diagonal and off-diagonal contributions, applying \eqref{eq:last_exit} to the diagonal part to perform the sum over $x_2$ (say), and ussing the standing assumption $\capacity_N(S_i)\geq \kappa$ for all $i\in A$ and $n\geq \Cr{tube_nP}P$, that for $\mu$ of the form \eqref{eq:tube_meas},
\begin{align} \label{eq:tube_lb_recipe}
\begin{split}
   \langle \mu, G_N \mu \rangle \leq  \frac{\Vert f \Vert_{\infty}^2}{\Cr{tube_nP}P\kappa} + \frac{1}{n^2}\sum_{\substack{i,j=1 \\ i\neq j }}^n \sup_{x_1\in S_i,x_2\in S_j} g_N\left(x_1,x_2\right)f\left(\frac{i}{n}\right)f\left(\frac{j}{n}\right).
\end{split}
\end{align}
We will soon see that the calculations to estimate the first off-diagonal terms in \eqref{eq:tube_lb_recipe} are similar to those in the proofs of \cite[Proposition 3.1]{rz25a} (cf.~in particular the proof of (3.8)). One however can't simply recycle the estimates in \cite{rz25a} due to the extra dependence on $\delta$ in the current setting.

We now choose $f(\cdot)=f_\varepsilon(\cdot)/Z_n$ with $f_\epsilon$ so that \eqref{eq:bessel_tilted_energy} holds. Since we may assume that $n \geq c(\varepsilon)$ without loss of generality, it follows that $Z_n \geq 1/2$ for such $n$ hence $\Vert f_\varepsilon/Z_n\Vert_{\infty}\leq c'(\varepsilon)$ since $f_\varepsilon$ is bounded uniformly on $[0,1]$. Overall this yields in view of \eqref{eq:tube_lb_recipe} the first term on the right-hand side of \eqref{eq:tube2_lb_macro}. With our choice of $f(\cdot)$, and in view of $\mu_n$ in \eqref{eq:discretised_bessel_em}, the second term on the right-hand side of \eqref{eq:tube_lb_recipe} can be recast as
$$
\sum_{\substack{i,j=1: i \neq j }}^n \sup_{x_1\in S_i,x_2\in S_j} g_N\left(x_1,x_2\right)\mu_n(i/n)\mu_n(j/n).
$$
We will consider near-diagonal and off-diagonal contributions separately. By assumption, for $x_1\in S_i,x_2\in S_j$ and $i \neq j$, with $\Cr{tube_nP}$ to be determined momentarily, we have that 
 \begin{equation}\label{eq:dist-lb2pt}
 |x_1-x_2| \geq d(S_i,S_j)\geq (|i-j|-\delta)\frac NP \geq \bigg(  \frac{\Cr{tube_nP}|i-j|}{n} -\frac{\delta}{P}\bigg) N.
 \end{equation}
 Let $t\in (0,1)$, eventually to be chosen as a function of $\delta, \varepsilon$ only.  Note in particular that $|x_1-x_2| > 0$ and hence $|x_1-x_2|\geq 1$ whenever $i \neq j$. We first consider $i,j$ such that $|i-j| > tn$. 
For such $i,j$ and all $P\geq \frac{\delta}{\Cr{tube_nP}\Cr{k0}(\epsilon)t}$, using Lemma \ref{eq:bessel_cont} and the fact that $K_0(\cdot)$ is decreasing implies
 \begin{align*}
K_0\left({2}\,\frac{|x_1-x_2| \vee 1}{N}\right)& = K_0\left({2}\,\frac{|x_1-x_2| }{N}\right)  \stackrel{\eqref{eq:dist-lb2pt}}{\leq} K_0\left({2}\, \bigg(\frac{\Cr{tube_nP}|i-j|}{n} -\frac{\delta}{P} \bigg) \right) \\
&\leq (1+\varepsilon) K_0\left(\frac{2|i-j|}{n}-\frac{2(1-\Cr{tube_nP})|i-j|}{n}\right)
\leq (1+\varepsilon)^2 K_0\left(\frac{2|i-j|}{n}\right),
 \end{align*}
where the last bound follows again from Lemma \ref{eq:bessel_cont} by taking $\Cr{tube_nP}\geq 1-\Cr{k0}(\epsilon)$. 
Now in view of, \eqref{eq:discretised_bessel_em}, and using the facts that $(x,y)
    \in [0,1]^2\mapsto  K_0\left({2}|x-y|\right) Z_n^{-2}{f_\varepsilon(x) f_\varepsilon(y)}\1_{\{|x-y|> t\}}$ is piecewise continuous and bounded uniformly in $n$ (recall that $s=s(\varepsilon)$ is fixed), and combining the previous estimate for $K_0$ with the asymptotics \eqref{eq:g2-asymp-unif}, one obtains, for all $N\geq c'(t,\varepsilon,\delta)$, $P \geq c(t,\varepsilon,\delta)$ and $n \geq \Cr{tube_nP}(\epsilon)P$ that
    \begin{align} \label{eq:tube2_macro_lb_main}
    \begin{split}
        &\sum_{\substack{i,j=1 \\ |i-j|>tn }}^n \sup_{x_1\in S_i,x_2\in S_j} g_N\left(x_1,x_2\right)\mu_n(i/n)\mu_n(j/n)\\
        &\quad\leq\frac{(1+\epsilon)}{\pi }\sum_{i,j=1}^n K_0\left({2} \frac{|i-j|}{n}\right)\1_{\{|i-j|>t n\}}\mu_n(i/n)\mu_n(j/n) \\
         & \quad \leq \frac{(1+\epsilon)^2}{\pi } \int_0^1 \int_0^1  K_0\big({2}|x-y|\big) f_\varepsilon(x)f_\varepsilon(y) \,dx\,dy
        \stackrel{\eqref{eq:bessel_tilted_energy}}{\leq} 
        \frac{(1+\epsilon)^3}{\capacity_{\R^2}^{\tau}([0,1])}.
    \end{split}
    \end{align}
    To deal with the near-diagonal contributions, i.e. $i,j$ such that $ 1\leq |i-j|< tn$, first note that by observing the proof of Lemma \ref{eq:bessel_cont}, one has that $\tfrac{K_0(t-\zeta)}{K_0(t)} \leq c$ uniformly in $t<1$ and $\zeta\leq t/2$. Using this and the upper bound on $g_N$ stemming from \eqref{eq:g2-asymp-unif} applied with $\varepsilon=1$, one obtains  that for all $x_1 \in S_i$, $x_2 \in S_j$, which satisfy \eqref{eq:dist-lb2pt}, one has $g_N(x_1,x_2) \leq c' K_0(\tfrac{|i-j|}{P})$. 
    Applying this and using the fact that $    \sum_{k=1}^{tn} K_0\left({k}/{P}\right) \leq 2 tn K_0(\Cr{tube_nP}t)$ for $t \le c$ and $n \geq \Cr{tube_nP}(\epsilon)P$, which follows from \cite[Lemma 3.4]{rz25a}, and since $f_\varepsilon$ is bounded on $[0,1]$ by $C(\varepsilon)$ and $Z_n^{-1}\leq 2$ for large enough $n$, it follows that for $N, P, n \geq C$ and $t \leq c$,
  \begin{equation}\label{eq:tube2_macro_lb_error}
        \sum_{\substack{i,j=1 \\ 1\leq |i-j|\leq tn }}^n \sup_{x_1\in S_i,x_2\in S_j} g_N\left(x_1,x_2\right)\mu_n(i/n)\mu_n(j/n) \leq C''(\varepsilon)\frac1{n} \sum_{k=1}^tn K_0(\tfrac{k}{ P})
        \leq 2C''(\varepsilon)t K_0(\Cr{tube_nP}t).
    \end{equation}
    By \eqref{eq:bessel_ub} and usinig that $\lim_{t\downarrow 0} t \log (1/t) =0$, one can choose $t=t(\varepsilon)$ small enough so that the right-hand side of \eqref{eq:tube2_macro_lb_error} is at most $\varepsilon/ \capacity_{\R^2}^{\tau}([0,1])$. Combining with \eqref{eq:tube2_macro_lb_main}, and feeding the resulting bound on $\sum_{\substack{i,j=1: i \neq j }}^n \sup_{x_1\in S_i,x_2\in S_j} g_N\left(x_1,x_2\right)\mu_n(i/n)\mu_n(j/n)$ into \eqref{eq:tube_lb_recipe}, \eqref{eq:tube2_lb_macro} follows.

    For \eqref{eq:tube2_ub_macro}, it suffices to consider the case $n=P$ by the monotonicity of capacity, cf.~\eqref{eq:tube_ub_macro}. Note that we may w.l.o.g.~assume $(S_i)_{1\leq i\leq P}$ is pairwise disjoint. Indeed, one can let $\tilde S_1=S_1,\tilde S_i=S_i\setminus \bigcup_{j=1}^{i-1}\tilde S_j$ for $2\leq i\leq P$ and note that our assumption $|x_1-x_2|\leq (|i-j|+1/\delta)N/P$ for all $i,j\in A,x_1\in  S_i$ and $x_2\in S_j$ continues to hold with $S_i$ replaced by $\tilde S_i$ since $\tilde S_i\subset S_i$ for all $1\leq i\leq P$. Let $\mu$ be a probability measure on $S$, and define the measure $\bar{\mu}$ on $[0,1]$ as
    \begin{equation}
    \label{eq:nu-def}
    \bar\mu(A) = \sum_{i=1}^{P}\bigg(\sum_{x\in S_i}\mu(x)\bigg)\delta_{\frac iP}(A), 
    \end{equation} 
    for $A\subset [0,1]$ a Borel set. Since the sets $S_i$ are disjoint by assumption, $\bar{\mu}$ defines a probability measure. Hence applying the variational principle \eqref{eq:cap_time_change_bm}, for every $\varepsilon >0$ we find $t=t(\varepsilon)$ such that
    \begin{equation} \label{eq:lb-cont}
     \frac{1}{\pi}  \int_0^1 \int_0^1  K_0\big({2}|x-y|\big) \1_{\{|x-y|> t\}} \,d\bar\mu(x)\,d\bar\mu(y)  \geq (1-\varepsilon)^{\frac13} \big(\capacity_{\R^2}^{\tau}([0,1])\big)^{-1}.
    \end{equation}
The parameter $t$ will now be leveraged to ensure sufficient spacing at the discrete level that enables comparison estimates between $\tfrac1{\pi}K_0$ and discrete kernel $g_N$ via application of \eqref{eq:g2-asymp-unif}.
By Lemma \ref{eq:bessel_cont}, one readily sees that for all $t=t(\epsilon)$ and $P\geq c(\delta)$ such that $tP>c^\prime(\epsilon)\delta^{-1}$, one has that for all $x_1 \in S_i$, $x_2 \in S_j$ with $|i-j|> tP$,
\begin{equation}
\label{eq:lb-cont-dis}
K_0 \left( {2}\frac{|i-j|}{P} \right) \leq (1-\varepsilon)^{-\frac13} K_0 \left( {2}\frac{|x_1-x_2|}{N} \right);
\end{equation}
indeed by assumption we know that $ {2} \frac{|x_1-x_2|}{N}  \leq {2} \frac{(|i-j|+1/\delta)}{P}$. 
Since $|x_i-x_j| \geq 1$ as $i \neq j$, the term $K_0 \left( {2}\frac{|x_1-x_2|}{N} \right)$ is further bounded from above by $(1-\varepsilon)^{-\frac13}  g_N(x_1,x_2)$ by means of \eqref{eq:g2-asymp-unif} whenever $P \geq c(\varepsilon)$. Substituting $\bar{\mu}$ from \eqref{eq:nu-def}, the left-hand side of \eqref{eq:lb-cont} can be recast as
\begin{multline*}
\sum_{i,j=1}^{P}\bigg(\sum_{x_1\in S_i}\mu(x_1)\bigg) \bigg(\sum_{x_2\in S_j}\mu(x_2)\bigg)  K_0 \left( {2}\frac{|i-j|}{P} \right) \1_{\{|i-j|> tP\}}\\
\leq (1-\varepsilon)^{-\frac23} \sum_{i,j=1}^{P}\sum_{x_1\in S_i} \sum_{x_2\in S_j}  g_N(x_1,x_2) \mu(x_1)\mu(x_2) \1_{\{|i-j|> tP\}} \leq (1-\varepsilon)^{-\frac23} \langle \mu, G_N \mu \rangle.
\end{multline*}
Combining with \eqref{eq:lb-cont}, the claim follows.
\end{proof}

\section{Coarse-graining} \label{sec:coarse_graining}

We now prove Proposition~\ref{prop:coarse_graining}, the coarse-graining scheme for paths, which played an prominent role in Section~\ref{sec:ub} and was admitted there. Proposition~\ref{prop:coarse_graining} is an adaptation of \cite[Proposition 3.4]{prevost_first_2025}, which combines two different methods of coarse-graining, one involving a ``hierarchical'' algorithm in the spirit of \cite{zbMATH06509926, popov_decoupling_2015} (see also \cite{rodriguez_phase_2013, zbMATH05864061, zbMATH05712768} for precursor results), and a ``peeling'' method from \cite{goswami_radius_2022}. There are two main differences here with \cite{prevost_first_2025}: the first is a certain \textit{anchoring} property, see \eqref{eq:A_structure}, which ensures that all the boxes in the coarse-graining stay sufficiently far away from a box around $x$, which plays a special role in the argument. The second difference are the controls \eqref{A:capacity} on the capacity of coarse-grained paths, which reflect the recurrent setup of the present article and eventually bring into play the capacity lower bound \eqref{eq:tube_lb_macro}. Within the present setup, obtaining~\eqref{A:capacity} requires some care.

Recall the setup \eqref{eq:various_boxes} of the various boxes entering the coarse-graining.
The strategy to prove Proposition \ref{prop:coarse_graining}, as in \cite[Proposition 3.4]{prevost_first_2025}, is to track the first exit points $(x_i)_{1\leq i\leq N/R}$ of $\gamma$ from the collection of concentric shells $(\Lambda(L)\cap\partial B_{iR})_{1\leq i\leq P}$ and around each $x_i$ (this is the ``peeling'' part), and then identify $\approx\frac{R}{ML} = \log n $ boxes traversed by $\gamma$ within $B(x_i,R)$ using a recursive construction (this is the ``hierarchical'' part), which yields the cardinality required in (\ref{A:cardinality}). We first describe the result of the aforementioned recursive strategy at scale $R$ in the following lemma.

\begin{lemma} \label{lem:coarse_graining_pre}
    For all $N,L, R \geq 1$, $M\geq 100$ such that $N \geq \Cl[c]{NR_pre}R$,  $R\geq 50ML$ and $x\in \Z^2$, there exists 
    a family $\bar{\mathcal{A}}=\bar{\mathcal{A}}^{L,M}_{x,R}$ of collections $\cC\subset B(x,R)$ as in (\ref{def:C}) such that, \begin{align}
    &\label{A:entropy_pre}
        |\bar{\mathcal{A}}| \leq \exp\left\{\Cl[c]{entropy_A_pre} \log(n)\log(M)\right\}\\[0.3em]
    &\label{A:card_pre}
    |\cC|\in[\Cl[c]{A_card_pre_lb} \log n,\Cl[c]{A_card_pre_ub} \log n],\\[0.3em]
       &\text{for any path } \gamma \text{ in } \Lambda(L) \text{ from } \bC_x^{L}
        \text{ to } B(x,{R-2L})^{\text{c}}, \exists \cC\in \bar{\mathcal{A}}
        \text{ such that } \cC \subset \gamma, \label{A:connectivity_pre}   \\[0.3em]
            &\label{A:capacity_pre}
        \capacity_N(\Sigma(\tilde\cC)) \geq 
       \textstyle  \Cl[c]{A_cap}\big[ K_0({ R}/{N})\left(1+ \frac{ML}{R}\log(R/L)\right) \big]^{-1} \ \text{ for all $\cC\in\bar{\mathcal{A}}$, $\tilde\cC\subset \cC$ with $|\tilde\cC |\geq {|\cC|}/{2}$}.
    \end{align}
\end{lemma}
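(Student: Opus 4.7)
The plan is to construct $\bar{\mathcal{A}}$ by a ``peeling'' scheme along concentric spheres, choosing one box per shell. Set $K = \lfloor (R-2L)/(32ML) \rfloor$, so $K = \Theta(\log n)$ under the standing hypothesis $R \geq 50ML$. Given a path $\gamma$ in $\Lambda(L)$ from $\bC_x^L$ to $B(x,R-2L)^{\mathrm{c}}$, let $y_k(\gamma)$ be the first vertex of $\gamma$ at Euclidean distance $\geq 32kML$ from $x$, for $k=1,\ldots,K$. Existence is automatic since $\gamma$ exits $B(x,R-2L)$, and $y_k(\gamma) \in \Lambda(L)$ because $\gamma \subset \Lambda(L)$. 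Define $\mathcal{C}(\gamma) = \{y_1(\gamma),\ldots,y_K(\gamma)\}$ and $\bar{\mathcal{A}} = \{\mathcal{C}(\gamma) : \gamma \text{ admissible}\}$. The mutual distance condition $|y_k - y_j| \geq 16ML$ for $k\neq j$ follows from $||y_k - x| - |y_j - x|| \geq 32ML|k-j| - O(L) \geq 16ML$, using $M\geq 100$. Each $y_k$ lies in a strip of width $O(L)$ around $\partial B(x, 32kML)$, containing $\leq CkM$ lattice points of $\Lambda(L)$, giving $|\bar{\mathcal{A}}| \leq \prod_{k=1}^K CkM \leq \exp\{C'K(\log M + \log K)\} \leq \exp\{C'' \log(n)\log(M)\}$, where the last step absorbs the $\log K = \log\log n$ factor into the constant (using $M\geq 100$). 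Properties \eqref{A:card_pre} and \eqref{A:connectivity_pre} are then immediate.

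The substantive part of the argument is the capacity lower bound \eqref{A:capacity_pre}. Given $\tilde{\mathcal{C}} \subset \mathcal{C}$ with $|\tilde{\mathcal{C}}| \geq K/2$, test the variational principle $\capacity_N(\Sigma(\tilde{\mathcal{C}}))^{-1} = \inf_{\mu} \langle \mu, G_N \mu\rangle$ with the uniform mixture $\mu = |\tilde{\mathcal{C}}|^{-1} \sum_{y_k \in \tilde{\mathcal{C}}} \bar e_{B(y_k,L),N}$, where $\bar e_{B(y_k,L),N}$ is the normalized equilibrium measure on $B(y_k,L)$. Splitting into diagonal and off-diagonal contributions, the diagonal is
\[
|\tilde{\mathcal{C}}|^{-2} \sum_{y_k \in \tilde{\mathcal{C}}} \capacity_N(B(y_k,L))^{-1} \leq \frac{c\log(N/L)}{K}
\]
by \eqref{eq:capB}. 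For the off-diagonal, the key geometric observation is that for $k<j$, $|x-y| \asymp 32\max(k,j)ML$ for $x \in B(y_k,L)$, $y \in B(y_j,L)$ (since both vectors are roughly radial with very different magnitudes). Combining \eqref{eq:g2-asymp-unif} with the bounds on $K_0$ in \eqref{eq:bessel_ub} and Lemma~\ref{lem:bessel_cont}, the off-diagonal contribution is at most
\[
\frac{c}{K^2}\sum_{1\leq k<j\leq K} K_0\!\left(\tfrac{32jML}{N}\right) \leq \frac{c}{K^2}\sum_{j=1}^K j\bigl[\log(N/R) + \log(K/j)\bigr] \leq c'\log(N/R),
\]
where the $j \log(K/j)$ sum contributes only a bounded constant after the $1/K^2$ normalization (so $c' K_0(R/N)$ in the end via \eqref{eq:bessel_ub}). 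Adding the diagonal term and collecting,
\[
\langle \mu, G_N \mu\rangle \leq c\,K_0(R/N) + \frac{c\log(R/L)}{K} \leq c'\,K_0(R/N)\!\left(1 + \frac{ML}{R}\log(R/L)\right),
\]
where the last inequality uses $K \asymp R/(ML)$ and the standing assumption $N \geq \Cr{NR_pre} R$ (hence $K_0(R/N) \geq c_0 > 0$). Inverting yields \eqref{A:capacity_pre}. The passage from $\mathcal{C}$ to $\tilde{\mathcal{C}}$ with $|\tilde{\mathcal{C}}| \geq K/2$ costs only a multiplicative factor of $4$ since $\langle \mu_{\tilde{\mathcal{C}}}, G_N \mu_{\tilde{\mathcal{C}}}\rangle \leq 4\langle \mu_{\mathcal{C}}, G_N \mu_{\mathcal{C}}\rangle$ (by extending sums over the bigger index set), which is harmless up to adjusting $\Cr{A_cap}$.

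The main obstacle I anticipate is controlling the off-diagonal Green's function sum with the correct sharp prefactor. A naive application of $K_0(r/N) \approx \log(N/r)$ for all pairs overcounts when two boxes are at disparate dyadic scales; the asymmetric weighting by $\max(k,j)$ is essential for the telescoping to collapse. A secondary point of care is verifying that the subsampling to $\tilde{\mathcal{C}}$ does not destroy the structure, for which it is crucial that the peeling construction keeps boxes strictly ordered by distance from $x$, so that deleting any subset still leaves a monotone configuration to which the same $K_0$ estimate applies.
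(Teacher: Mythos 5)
Your construction replaces the paper's hierarchical binary-tree embedding (imported from Lemma~\ref{lem:k_prop_embedding}, i.e.\ \cite[Lemma~4.6]{prevost_first_2025}) with a pure concentric-shell peeling. This is a genuinely different and conceptually simpler route, but as written there are two gaps.

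First, the geometric claim underlying the off-diagonal Green's function estimate is false. You assert that $|x-y| \asymp 32\max(k,j)ML$ for $x\in B(y_k,L)$, $y\in B(y_j,L)$, $k<j$, on the grounds that ``both vectors are roughly radial.'' There is no such control: $y_k$ is the \emph{first} exit vertex of $\gamma$ from the shell of radius $32kML$, but $\gamma$ may wind, so the angular coordinates of $y_k$ and $y_j$ are unconstrained. For $k$ close to $j$ (say $j=K$, $k=K-1$), the distance $|y_k-y_j|$ can be as small as $\sim 32ML$, off from $32KML$ by a factor of order $K$ --- precisely the near-diagonal regime where $g_N$ is largest. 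The correct lower bound is the one coming from the triangle inequality applied to concentric shells, $|x-y| \geq 32|j-k|ML - O(L)$. It so happens that re-running your sum with $K_0\big(\tfrac{32|j-k|ML}{N}\big)$ in place of $K_0\big(\tfrac{32jML}{N}\big)$ still gives the bound $\leq c\,K_0(R/N)$ (the point being that $\tfrac{1}{K}\sum_{m<K}\log(K/m)$ is bounded), so the conclusion is recoverable, but your argument does not establish it.

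Second, and more structurally, the entropy absorption is not valid. Peeling encodes the absolute position of each $y_k$ in an annular strip containing $\sim kM$ lattice points of $\Lambda(L)$, so $\log|\bar{\mathcal A}| \lesssim K\log K + K\log M$. With $K\asymp\log n$ this is $\log(n)\big(\log\log n + \log M\big)$, which cannot be bounded by $\Cr{entropy_A_pre}\log(n)\log(M)$ uniformly in $n$ and $M$: the hypothesis $M\geq 100$ gives $\log M\geq\log 100$, a fixed lower bound, while $\log\log n$ is unbounded. The hierarchical construction in the paper avoids this: positions are encoded relative to parents at dyadic scales, and the entropy sum $\sum_{m\geq 1}2^{-m}\log m$ converges, so the total tree-entropy is $\lesssim 2^k \asymp |\cC|$ rather than $|\cC|\log|\cC|$, leaving only the $\lesssim 2^k\log M$ contribution from the leaf tips. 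In the actual application (\eqref{eq:ML_choice_ub}, where $\log\log n \ll \log M$), your peeling entropy would suffice, but the lemma as stated requires the bound for all admissible $L,M,R$, and there your construction fails. You would need either to strengthen the hypotheses of the lemma or to adopt the hierarchical encoding for the in-shell coarse-graining.
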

One should first note that despite being similar to Proposition \ref{prop:coarse_graining}, Lemma \ref{lem:coarse_graining_pre} is insufficient for our purpose since (\ref{A:capacity_pre}) doesn't encode the precise constant in front of the tube capacity as in \eqref{A:capacity}.

To get an idea of the construction of $\bar{\mathcal{A}}$, we briefly review \cite[Lemma 4.6]{prevost_first_2025} here. 
For this we consider the following collection of scales as in \cite[(4.27)]{prevost_first_2025},
\begin{equation} \label{eq:coarse_grain_L}
    L_0\defeq 10 LM,\, l_k\defeq {(k+1)^{-2}}L_k 
    \text{ and } L_{k+1}\defeq2(L_k+10 l_k) \text{ for all } k\geq 0, 
\end{equation}
Note that in \cite[(4.27)]{prevost_first_2025}, there is an additional parameter $\kappa$ in the definition of the scales, which is later taken to be a large constant. This was only needed since \cite[(4.27)]{prevost_first_2025} is working on a general class of transient graphs. On $\Z^2$, it suffices to take $\kappa =10$.
In addition, there exists $C\in(0,\infty)$ such that
\begin{equation} \label{eq:scale_L_bound}
    LM2^k\leq 2^nL_{k-n}\leq L_k\leq CLM 2^k \text{ for all } 0\leq n\leq k.
\end{equation}

We denote by $T_k = \bigcup_{n=0}^{k} T^{(n)}$, where $T^{(n)} = \{0,1\}^n$, the binary tree of depth $k \geq 0$, with the convention $T^{(0)} = \{\varnothing\}$. 
For $\sigma \in T^{(i)}$, $\sigma' \in T^{(j)}$, $i + j \leq k$, we denote by $\sigma \sigma' \in T^{(i+j)}$ the concatenation of $\sigma$ and $\sigma^\prime$. The following refines similar results from \cite{rodriguez_phase_2013, zbMATH06509926}.  

\begin{lemma}[{\cite[Lemma 4.6]{prevost_first_2025}}] \label{lem:k_prop_embedding}
    For all $L\geq 1, M\geq 2, k\geq 0, x\in \Z^2$ and all paths $\gamma$ in $\Lambda(L)$ between $\bC^{l_k}_x$ and $(\tC^{L_k}_x)^{\text{c}}$, there exists a map $\tau:=\tau^x_k:T_k\to\Z^2$ such that $\tau(\emptyset)=x$ and for all $0\leq n\leq k-1$,
    \begin{align} \label{k_embed_position}
    \begin{split}
        &\tau(\sigma0) \in \Lambda(l_{k-n-1}) \,\cap\, B(\tau(\sigma),l_{k-n}+l_{k-n-1})
        \,\text{ and }\, \\
        &\tau(\sigma1) \in \Lambda(l_{k-n-1}) \,\cap\, B(\tau(\sigma),L_{k-n}+2l_{k-n-1})\setminus B(\tau(\sigma),L_{k-n}-l_{k-n-1}) 
        \text{ for all } \sigma\in T^{(n)},   
    \end{split}
    \end{align}
   and for all $0\leq n\leq k$, 
    \begin{align} 
    &\label{k_embed_dist_lb}
        |\tau(\sigma)-\tau(\sigma^\prime)|
        \geq 2L_{k-n} +10 l_{k-n} \text{ for all } \sigma\neq\sigma^\prime \in T^{(n)} \\[0.3em]
    &\label{k_embed_dist_ub}
        \tC^{L_{k-n}}_{\tau(\sigma)} \cap \tC^{L_0}_{\tau(\sigma\sigma^{\prime})} \neq \emptyset
        \text{ for all } \sigma\in T^{(n)} \text{ and } \sigma^\prime \in T^{(k-n)},\\[0.3em]
& \label{k_embed_path}
        \bC^{l_0}_{\tau(\sigma)} \cap \gamma \neq \emptyset
        \text{ for all } \sigma\in T^{(k)}.
    \end{align}
\end{lemma}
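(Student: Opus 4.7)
I would prove the lemma by induction on $k$, exploiting the hierarchical self-similarity built into the scales \eqref{eq:coarse_grain_L}, specifically the recursion $L_{k+1}=2L_k+20l_k$ together with $l_k=(k+1)^{-2}L_k$. The base case $k=0$ is trivial: set $\tau^x_0(\emptyset)=x$ and observe that \eqref{k_embed_position}, \eqref{k_embed_dist_lb}, \eqref{k_embed_dist_ub} hold vacuously, while \eqref{k_embed_path} reduces to $\bC^{l_0}_x\cap\gamma\neq\emptyset$, which is built into the hypothesis (since $l_k=l_0$ when $k=0$).

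For the inductive step, I would first fix the two level-$1$ nodes $\tau(0),\tau(1)$ by peeling $\gamma$ at two natural scales. Pick any $v_0\in\gamma\cap\bC^{l_k}_x$ and let $\tau(0)$ be the nearest point of $\Lambda(l_{k-1})$ to $v_0$; pick $v_1\in\gamma$ to be the first vertex with $|v_1-x|\geq L_k$ (which exists since $\gamma$ meets $(\tC^{L_k}_x)^c$) and let $\tau(1)$ be the nearest point of $\Lambda(l_{k-1})$ to $v_1$. Using that the half-diagonal of a $\Lambda(l_{k-1})$-cell is bounded by $l_{k-1}/2$, together with the lattice step of $\Lambda(L)$ being at most $L\leq L_0\leq l_{k-1}$, a short calculation verifies the position property \eqref{k_embed_position} at $n=0$. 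Leveraging the identity $L_k=2L_{k-1}+20l_{k-1}$ and the bound $l_k\leq 8l_{k-1}$, connectedness of $\gamma$ then produces sub-paths $\gamma_0,\gamma_1\subset\gamma$ connecting $\bC^{l_{k-1}}_{\tau(i)}$ to $(\tC^{L_{k-1}}_{\tau(i)})^c$ for $i=0,1$. Applying the inductive hypothesis to each yields maps $\tau^{\tau(i)}_{k-1}:T_{k-1}\to\Z^2$, and setting $\tau(i\sigma')=\tau^{\tau(i)}_{k-1}(\sigma')$ extends $\tau$ to $T_k$. From the induction, \eqref{k_embed_position} at $n\geq 1$, the intra-subtree case of \eqref{k_embed_dist_ub}, and \eqref{k_embed_path} transfer at once; the \emph{sibling} case of \eqref{k_embed_dist_lb} (i.e.~same parent $\pi\in T^{(n-1)}$) also follows: the position property yields $|\tau(\pi 0)-\tau(\pi 1)|\geq L_{k-n+1}-l_{k-n+1}-2l_{k-n}$, which exceeds $2L_{k-n}+10l_{k-n}$ thanks precisely to the $20l_{k-n}$ slack in $L_{k-n+1}=2L_{k-n}+20l_{k-n}$.

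The step I expect to be the main obstacle is the \emph{cross-subtree} instance of \eqref{k_embed_dist_lb}: showing $|\tau(0\sigma_1)-\tau(1\sigma_2)|\geq 2L_{k-n}+10l_{k-n}$ for $\sigma_1,\sigma_2\in T^{(n-1)}$ with $n\geq 2$. A crude triangle inequality is insufficient here, because the diameter of each subtree can be of order $L_{k-1}$ while $|\tau(0)-\tau(1)|$ is itself only of order $2L_{k-1}$. To resolve this I would follow the strategy of \cite[Lemma 4.6]{prevost_first_2025}: reinforce the inductive hypothesis with an additional directional control requiring each ``1''-subtree to lie essentially in a halfspace opposite its grandparent, and/or restrict $\gamma_1$ to be the portion of $\gamma$ situated outside a large ball centred at $x$, so that the subtree rooted at $\tau(1)$ is populated exclusively by vertices well-separated from $\tau(0)$. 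Combined with the $20l_{k-1}$ slack in the definition of $L_k$ and the asymptotics $L_{j+1}/L_j\to 2$, this refinement delivers the required cross-subtree separation at every scale and closes the induction.
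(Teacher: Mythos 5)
Your overall strategy---induction on $k$, peeling off the two level-one nodes $\tau(0),\tau(1)$ and recursing on two sub-paths of $\gamma$---is exactly the one the paper follows; note that the paper itself only sketches this construction and defers all details to \cite[Lemma 4.6]{prevost_first_2025}. Your base case, your verification of \eqref{k_embed_position} at $n=0$, your sibling instance of \eqref{k_embed_dist_lb} (using $l_{k-n+1}\le 8\,l_{k-n}$ and the $20l_{k-n}$ slack in $L_{k-n+1}=2L_{k-n}+20l_{k-n}$) are all sound, and you are right that the cross-subtree instance of \eqref{k_embed_dist_lb} is precisely the point that does not follow from \eqref{k_embed_position} by a triangle inequality.

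Two issues remain. First, a concrete step that would fail as written: you split $\gamma$ at the \emph{first} vertex $v_1$ with $|v_1-x|\ge L_k$, whereas the construction in the paper (and in \cite{prevost_first_2025}) splits $\gamma$ at its \emph{last} visit to $\bC^{L_k}_x$, taking $\gamma_0$ to be everything before that time and $\gamma_1$ everything after. This is not cosmetic: with the last-visit choice one has $\gamma_1\subset B(x,L_k)^{\text{c}}$, and since every node of the subtree over $\gamma_1$ is chosen so that its $l$-box meets the corresponding piece of $\gamma_1$, the entire $1$-subtree is confined to $B(x,L_k-l_{k-1})^{\text{c}}$, while the $0$-subtree lies in $B(x,L_k-c\,l_{k-1})$ by \eqref{k_embed_dist_ub}; this disjointness is the starting point for separating the two subtrees. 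With your first-exit choice the tail of $\gamma$ after $v_1$ may return deep inside $B(x,L_k)$, the regions allowed to the two subtrees overlap almost entirely, and even the leaf-level case of \eqref{k_embed_dist_lb} can fail (two leaves of opposite subtrees could coincide). You do mention restricting $\gamma_1$ to the exterior of a large ball, but only as a tentative ``and/or'' patch rather than as the definition of the split. Second, even granting the last-visit split, the confinement above yields a gap of order $l_{k-1}$ between the two subtrees, whereas \eqref{k_embed_dist_lb} for, say, $\tau(01)$ versus $\tau(11)$ demands $2L_{k-2}+10l_{k-2}\approx L_{k-1}\gg l_{k-1}$; so the cross-subtree separation really is the substantive content of the lemma, and your proposal (like the paper's sketch) ultimately points to \cite{prevost_first_2025} for it rather than supplying an argument.
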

Actually, Lemma \ref{lem:k_prop_embedding} is proved in \cite{prevost_first_2025} for graphs for which the Green's function decays polynomially, which is not the case on $\Z^2$. However it's easy to check that the proof of the Lemma in fact doesn't rely on this assumption, hence the result also holds true in our case. The idea of the proof of Lemma \ref{lem:k_prop_embedding} is to pick the image of $\tau$ recursively on $k$. If $k=0$, one simply takes $\tau(\emptyset)=x$. If Lemma \ref{lem:k_prop_embedding} is true for some $k-1$, one can take $\gamma_0\subset\gamma$ to be the part of $\gamma$ from the start to its last visit in $\bC^{L_k}_x$ and take $\gamma_1\subset\gamma$ to be the rest of $\gamma$. One can check that by taking $\tau(0)$ to be the first vertex in $\Lambda(l_{k-1})\cap B(x,l_k+l_{k-1})$ such that  $\bC^{l_{k-1}}_{\tau_0}\cap\gamma_0(0)\neq\emptyset$ and $\tau(1)$ to be the first vertex in $\Lambda(l_{k-1})\cap B(x,L_k+2l_{k-1})\setminus B(x,L_k-l_{k-1})$ such that  $\bC^{l_{k-1}}_{\tau_1}\cap\gamma_1(0)\neq\emptyset$, and using the induction assumption to generate the sub-tres attached to $\tau(0)$ and $\tau(1)$, the mapping $\tau$ satisfies all requirements in Lemma \ref{lem:k_prop_embedding}. We now use this to finish the: 
\begin{proof}[Proof of Lemma \ref{lem:coarse_graining_pre}]
    Let 
    \begin{equation}\label{eq:cg_scale_choice}
    k\geq 1 \text{ be such that } L_k\in \left[R/(200),R/4\right].
    \end{equation}
    By Lemma \ref{lem:k_prop_embedding}, we can define
    \begin{equation} \label{eq:def_A}
        \bar{\mathcal{A}}= \big\{ (x_\sigma)_{\sigma\in T^{(k)}}\in \Lambda(L)^{T^{(k)}}:x_\sigma\in \bC^{l_0}_{\tau(\sigma)} \text{ for all }\sigma\in T^{(k)} \text{for some } \tau= \tau^x_k \text{ as in Lemma \ref{lem:k_prop_embedding}}\big\}.
    \end{equation}

    The fact that $\bar{\mathcal{A}}$ satisfies $\cC\subset B(x,R)$ for all $\cC\in \bar{\mathcal{A}}$ and the conditions  \eqref{A:entropy_pre}, \eqref{A:card_pre} and \eqref{A:connectivity_pre} is a direct consequence of the proof of \cite[Corollary 4.8]{prevost_first_2025} since the proof of these properties doesn't rely on the assumption that the Green's function exhibits polynomial decay. Hence it remains to check (\ref{A:capacity_pre}). The set $\Sigma(\mathcal{C})$ is a Cantor-type subset of the plane. For any $\cC\in \bar{\mathcal{A}}$, take $\tcC\subset \cC$ such that $|\tcC|\geq |\cC|/2$. 
    Since $  \capacity_N(\Sigma(\tcC)) = 1/ \inf_\mu \langle \mu , G_N \mu \rangle$ and picking $\mu$ to be the uniform measure supported on $\Sigma(\tcC)$, we obtain an immediate lower bound,
    \begin{equation} \label{eq:cap_var_form}
      \capacity_N(\Sigma(\tcC)) \geq
      \frac{|\Sigma(\tcC)|^2}{\sum_{u,v \in\Sigma(\tcC)} g_N(u,v)}.
    \end{equation}
    For the sum on the r.h.s.~of \eqref{eq:cap_var_form}, 
    we consider, for any fixed $u \in \Sigma(\tcC)$, the collection of annuli 
    $A^{u}_i= \tcC\,\cap\,B(u,400 L_i)\setminus B(u, L_{i-1})$ for $1\leq i\leq k$, and obtain using \eqref{eq:cg_scale_choice} that this sum is bounded by
    \begin{equation} \label{eq:cap_decompose}
       \sum_{v \in\Sigma(\tcC) \,\cap\, \bC^{L_0}_x} g_N(u,v) 
       +
       \sum_{i=1}^k\sum_{v\in A^u_i} \sum_{w \in \bC_{v}}
       g_N(u,w).
    \end{equation}
     Let $\sigma^u \in T^{(k)}$ be such that  $u\in C^{l_0}_{\tau(\sigma^u)}$ in the notation of \eqref{eq:def_A}. \black
    The key is to bound $|A^u_i|$, for this we consider $B_i^u$ to be the set of ancestors $\sigma\in T^{(k-i)}$ of $\sigma^u$ \black such that $\tau(\sigma)\in B(u,450 L_i+3L_0)$. By (\ref{k_embed_dist_lb}) we have that
    \begin{equation} \label{eq:annuli_ancestor}
        |B_i^u| \leq c \Big(\frac{450 L_i+3L_0}{2L_i+10l_{i}}\Big)^2 \leq c^\prime.
    \end{equation}
    We will now show that the ancestor of each site in $A^u_i$ belongs to the set $B_i^u$. 
    For each $v\in A^u_i$,
    consider the mapping $\tau$ and $\sigma\in T^{(k)}$ from \eqref{eq:def_A} such that $v\in \bC^{l_0}_{\tau(\sigma)}$ and let $\sigma_i\in T^{(k-i)}$ be the $i$-th ancestor of $\sigma$, it follows from (\ref{k_embed_dist_ub}) that $|v-\tau(\sigma_i)|\leq 2(L_i+L_0)+l_0\leq 2L_i+3L_0$, and hence $|u-\tau(\sigma_i)|\leq 2L_i+3L_0+400 L_i\leq 450 L_i+3L_0$.
    Therefore $\sigma_i\in B^u_i$. Now by \eqref{eq:annuli_ancestor} and the observation that each ancestor can have $2^i$ children we have
    \begin{equation} \label{eq:annuli_bound}
        |A^u_i|\leq c 2^i.
    \end{equation}
    Further, for all $w\in \bC_{v}$ where $v\in A^u_i$,  we have ${|w-u|\geq  L_{i-1}-L\geq LM2^{i-2}}$ by the definition of $A^u_i$ and \eqref{eq:scale_L_bound}.
    Plugging this and \eqref{eq:scale_L_bound} back to the second term on the r.h.s of \eqref{eq:cap_decompose} we have
    \begin{equation}\label{eq:green_annuli}
        \sum_{i=1}^k\sum_{v\in A^u_i} \sum_{w\in \bC_{v}}
        g_N(u,w)
        \leq c L^2  \sum_{i=1}^k 2^i 
        K_0\Big(\frac{LM2^i}{4N}\Big) 
        \leq c L^2  
        2^kK_0\Big(\frac{LM2^k}{4N}\Big),
    \end{equation}
    where the last inequality follows from an easy adaptation of \cite[(3.12)]{rz25a}.

    Moving on to the first term in (\ref{eq:cap_decompose}), by (\ref{k_embed_dist_lb}) it's easy to see that $\Sigma(\tcC) \,\cap\, \bC^{L_0}_u = \bC_{u_\sigma}^{L}$, where $u_\sigma\in \cC$ is such that $u\in \bC^L_{u_\sigma}$. Hence by \cite[(3.27)]{rz25a} with $h_N=1$ therein,
    \begin{equation} \label{eq:cap_single_box}
        \sum_{y \in\bC_{u_\sigma}^{L}} g_N(x,y) \leq c L^2 K_0({ L}/{N}) .
    \end{equation}
    To control the Bessel terms in \eqref{eq:green_annuli}, \eqref{eq:cap_single_box}, it follows from \eqref{eq:bessel_ub}, \eqref{eq:cg_scale_choice} and \eqref{eq:scale_L_bound} that
    \begin{equation} \label{eq_precap_K0}
        K_0\Big(\frac{LM2^k}{4N}\Big)\leq K_0(cR/N)
        \leq (\log(N/R)-\log(c))\vee 0 +c^\prime
        \leq c\log(N/R)\leq cK_0(R/N),
    \end{equation}
    where the second last inequality holds upon choosing $N\geq \Cr{NR_pre}R$ with $\Cr{NR_pre}$ large enough.
    Moreover, by \eqref{eq:scale_L_bound}, \eqref{eq:bessel_ub} and upon possibly enlarging the value of $\Cr{NR_pre}$, one also has that
    \begin{equation}\label{eq:K_0_ratio_ub}
        2^{-k}\frac{K_0(L/N)}{K_0(R/N)}\leq c\frac{ML}{R}\left(\frac{\log(N/L)+c}{\log(N/R)}\right)\leq c\frac{ML}{R} \left(c^\prime+\frac{\log(R/L)}{\log(N/R)}\right).
    \end{equation}
    To finish off, by \eqref{A:capacity_pre}, \eqref{eq:cg_scale_choice} and \eqref{eq:scale_L_bound}, we have $|\Sigma(\tilde \cC)|\geq  cL^2\, 2^k$, and combining this, \eqref{eq_precap_K0}, \eqref{eq:K_0_ratio_ub}, \eqref{eq:cap_decompose}, \eqref{eq:green_annuli} and \eqref{eq:cap_single_box} we obtain,
    \begin{align*}
    \begin{split}
        \frac{1}{|\Sigma(\tilde \cC)|^2} \sum_{u,v\in\Sigma(\tcC)} g_N(x,y)
        &\leq c 
        \left(K_0\left(\frac{R}{N}\right)
        + 2^{-k}K_0\left(\frac{L}{N}\right)\right) \\
        &\leq c K_0\Big(\frac{ R}{N}\Big)\left(1+ \frac{ML}{R} \left(c^\prime+\frac{\log(R/L)}{\log(N/R)}\right)\right) .
    \end{split}
    \end{align*}
    The desired bound \eqref{A:capacity_pre} on the capacity of $\Sigma(\tilde \cC)$ is an immediate consequence of this and \eqref{eq:cap_var_form}.
\end{proof}

\begin{proof}[Proof of Proposition \ref{prop:coarse_graining}]
The proof proceeds by concatenating the hierarchical structure supplied by Lemma~\ref{lem:k_prop_embedding} with a ``peeling'' strategy as used in \cite{goswami_radius_2022}. Consider the scales $R = ML \log\left(\tfrac{N}{ML}\right)= ML \log (n)$ and $P = \lfloor N/(5R)\rfloor=\lfloor \frac{n}{5\log(n)}\rfloor$. By picking $N\geq \Cr{NML}ML$ with $\Cr{NML}$ large enough, we can ensure $R\geq 50ML$ so that we are in the regime of Lemma \ref{lem:coarse_graining_pre}. We can thereby define (where the $x_i$'s will be specified shortly)
\begin{equation}  \label{def:A_final}
\mathcal{A}\defeq
\left\{\,
     \bigcup_{i=1}^{P} \cC_i :
     \text{for each } i\in\{1,\dots,P\},
     \;\cC_i\in\bar{\mathcal{A}}^{L,M}_{x_i,R}\text{ for some }x_i\in \text{Ann}_i
\right\},
\end{equation}
where
$\text{Ann}_i =
\bigl(B\bigl(x,\tfrac{(5i-1)N}{5P}\bigr)\setminus
B\bigl(x,\tfrac{(5i-2)N}{5P}\bigr)\bigr)\cap\Lambda(L)$ for all $1\le i\le P,\; x\in \mathbb{Z}^2$. For each path $\gamma$ in $\Lambda(L)$ from $\bC_x^L$ to $B(x,N-2L)^{\text{c}}$, we pick $x_i$ from \eqref{def:A_final} to be the first vertex in $\text{Ann}_i$ visited by $\gamma$ for $1\leq i \leq P$.
The fact that $\mathcal{A}$ satisfies \eqref{A:entropy}, \eqref{A:connectivity} and that each $\cC\in\mathcal{A}$ has the form described in \eqref{def:C} follows from a similar reasoning of the proof of \cite[Proposition 3.4, p.~54]{prevost_first_2025}.

 For \eqref{A:cardinality}, it is easy to see that by \eqref{def:A_final} and \eqref{A:capacity_pre} that $|\cC|\leq P\frac{\Cr{A_card_pre_ub}R}{ML}\leq \frac{\Cr{A_card_pre_ub}N}{5ML} =\Cr{card_A_ub} n $ and $|\cC|\geq (P-1)\frac{\Cr{A_card_pre_lb}R}{ML}$. By choosing $N\geq \Cr{NML}ML$ with $\Cr{NML}$ large enough, we can ensure $P>2$ and hence $P-1\geq P/2$. It then follows that $|\cC|\geq \frac{\Cr{A_card_pre_lb}N}{10ML}= \Cr{card_A_lb}n.$

 For \eqref{eq:A_structure}, it immediately follows from \eqref{def:A_final} that for all $\cC\in \mathcal A$, there exists $(\cC_i)_{1\leq i\leq P}$ such that $\cC=\bigcup_{i=1}^P \cC_i$ where each $\cC_i$ has the required cardinality. Moreover, by the definition of $\text{Ann}_i$, \eqref{def:C_boxes} and Lemma \ref{lem:coarse_graining_pre}, for all $y\in \Sigma(\cC_i)$ we have that $d(y,B(x,L))\geq \frac{5i-2}{5P}N-R -2L \geq (5i-2-2)R\geq (5i-4)R$.

It only remains to check \eqref{A:capacity}. For any $\cC\in \mathcal{A}$, let $\tcC\subset \cC$ be such that $|\tcC|\geq (1-\rho) |\cC|$. Since $|\cC|\leq \Cr{card_A_ub}\frac{N}{ML}$ and $\cC_i\geq \frac{\Cr{A_card_pre_lb}R}{ML}$ for all $1\leq i\leq P$, there exists at most $c\rho\frac{N}{R}\leq c\rho P$ possible $k$ such that $|\tcC\cap\cC_k |\leq \frac{1}{2}|\cC_k|$. Now by taking $\delta=1/2$ in Proposition \ref{prop:tube} and picking 
\begin{equation}\label{eq:Apick}
A=\{k\in\{1,\dots,|\cC|\}:|\tcC\cap\cC_k |\geq {1}/{2}\},
\end{equation}
 we have that $n=|A|\geq (1-c\rho) P$ with $(1-c\rho) \geq \Cr{tube_nP}(\eta,\frac{1}{2})$ for all $\rho<\Cr{cg_rho}(\eta)$ upon taking $\Cr{cg_rho}(\eta)$ small enough.

Now in view of \eqref{def:C_boxes}, consider $S_k=\Sigma(\tcC\cap\cC_k)$ for $k\in A$. We claim that
\begin{equation*}
    S_k\subset \Sigma(\cC_k)\subset B(\text{Ann}_k,L+R)\subset B(x,iN/P)\setminus B(x,(i-1/2)N/P).
\end{equation*}
The first inclusion is straightforward and the second inclusion holds by \eqref{def:C_boxes} and the fact that $\cC_k\subset B(x_k,R)$ (cf.~Lemma \ref{lem:coarse_graining_pre}). To see the last inclusion, note that $i\frac{N}{P}+(L+R-\frac{N}{P}) \leq i\frac{N}{P}+(2R-2R)=i\frac{N}{P}$ and $i\frac{N}{P}-(\frac{2N}{5P}+L+R)\geq i\frac{N}{P}-(\frac{2N}{5P}+L+R)\geq i\frac{N}{P}-\frac{1N}{2P}$ upon choosing $N\geq \Cr{NML}ML$ with $\Cr{NML}$ large enough. Hence we have $d(S_i,S_j)\geq (|i-j|-1/2)N/P$ for all $i \neq j\in A$ and $P\geq \Cr{tube_N}(\eta,\frac{1}{2})$ if $N\geq \Cr{NML}(\eta)ML$ with $\Cr{NML}(\eta)$ large enough as required for Proposition \ref{prop:tube}. We are now ready to apply Proposition \ref{prop:tube} to $\bigcup_{k\in A}S_k$ with $\kappa$ being the left-hand side of \eqref{A:capacity_pre}. Note that by \eqref{eq:bessel_ub} and the fact $N/R\geq 2$ (which is ensured by $N\geq \Cr{NML}ML$ with $\Cr{NML}$ large enough),
\begin{align*}
\begin{split}
    \frac{1}{\Cr{tube_lb_diag}\kappa P} 
    &\leq c\frac{R}{N} K_0\Big(\frac{ R}{N}\Big)\left(1+ \frac{ML}{R}\log(R/L)\right)
    \leq c\frac{R}{N} \log(N/R)\left(1+ \frac{ML}{R}\log(R/L)\right)\leq c'\,H,
\end{split}
\end{align*}
using in the last step that $R/N= \log(n)/n$, $ML/R= \log(n)$ and $R/L=M \log (n)$.
It then follows from the above display and \eqref{eq:tube_lb_macro} that
$   \capacity_N\left(\bigcup_{k\in A}S_k\right) \geq \left(c\,H+ ({1+\eta})/{\capacity_{\R^2}^{\tau} \big([0,1] \big)}\right)^{-1},$ and \eqref{A:capacity} follows by monotonicity of $\capacity_N(\cdot)$ since $\bigcup_{k\in A}S_k \subset \Sigma(\tilde\cC)$.
\end{proof}

\section{Supremum of harmonic averages} \label{sec:harm_avg}

In this section, we present the proofs of Lemma \ref{lem:sup_tail}, Lemma \ref{lem:bad_box} and Lemma \ref{lem:harm_avg}. We start with a gradient estimate for harmonic functions, which will be useful in the sequel. Via \cite[Corollary 5.2]{rz25a} and adapting the argument by Grigor'yan and Telcs in \cite[Proposition 11.1]{grigoryan_sub-gaussian_2001}, one obtains:

\begin{lemma}
There exists $\Cl[c]{grad}<\infty$ such that for all $t\in(0,\frac{1}{2}],R\geq1$ and any function $u$ that's defined on $\bar B_{2R}$ and harmonic in $B_{2R}$,
\begin{equation} \label{eq:harm_grad}
    |u(x_1)-u(x_2)| \leq \frac{\Cr{grad}}{R} \|x_1-x_2\| \sup_{x\in B_{2R}} |u(x)| \quad\quad \forall x_1,x_2 \in B_{tR}.
\end{equation}
\end{lemma}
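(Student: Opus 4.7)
The plan is to combine the Hölder / oscillation-type estimate supplied by \cite[Corollary 5.2]{rz25a} with the standard upgrading mechanism of Grigor'yan--Telcs \cite[Proposition 11.1]{grigoryan_sub-gaussian_2001}, whose role here is to promote such regularity to the Lipschitz bound \eqref{eq:harm_grad}.

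First I would reduce \eqref{eq:harm_grad} to its nearest-neighbor analogue:
\begin{equation}\label{eq:nnbr-grad}
|u(y)-u(y+e)|\leq \frac{C}{R}\sup_{B_{2R}}|u|,\qquad y,y+e\in B_{R/2},\ e\in\{\pm e_1,\pm e_2\}.
\end{equation}
Indeed, any two points $x_1,x_2\in B_{tR}$ with $t\leq 1/2$ can be joined by a lattice path in $B_{R/2}$ of length at most $\|x_1-x_2\|_1\leq 2\|x_1-x_2\|$, and a telescoping application of \eqref{eq:nnbr-grad} along this path yields \eqref{eq:harm_grad}.

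To establish \eqref{eq:nnbr-grad}, I would run two continuous-time simple random walks $X, X'$ from $y$ and $y+e$ under the reflection coupling through the perpendicular bisector $H$ of $\{y,y+e\}$: at each step $X'$ mimics $X$ reflected through $H$ until the first meeting time $T$, after which the two walks evolve together. Because $e$ is a unit lattice vector, the reflection preserves $\Z^2$ and the coupling is well defined. Let $\tau$ be the first exit time of either walk from $B_{R/2}(y)\subset B_{2R}$. Applying optional stopping to the bounded martingales $u(X_{\cdot\wedge\tau})$ and $u(X'_{\cdot\wedge\tau})$ and cancelling the contribution from $\{T\leq \tau\}$ (where $X_\tau=X'_\tau$), one obtains
\begin{equation*}
u(y)-u(y+e) = \mathbf{E}\bigl[(u(X_\tau)-u(X'_\tau))\mathbf{1}\{T>\tau\}\bigr],
\end{equation*}
hence $|u(y)-u(y+e)|\leq 2\sup_{B_{2R}}|u|\cdot\mathbf{P}(T>\tau)$.

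The remaining task, which is where \cite[Corollary 5.2]{rz25a} enters, is to bound $\mathbf{P}(T>\tau)\leq C/R$. Projecting the coupled pair onto the axis $\R e$ yields a one-dimensional walk started at distance one from $0$, and $\{T>\tau\}$ is contained in the event that this projected walk has not hit $0$ before leaving an interval of length $\Theta(R)$. The bound $C/R$ then follows from a classical gambler's-ruin argument, whose quantitative form on $\Z^2$ is provided by the random-walk estimates of Section~\ref{sec:facts} together with \cite[Corollary 5.2]{rz25a}. The main obstacle is handling boundary effects faithfully at the lattice level -- specifically, the fact that the reflected walk and the original walk may exit $B_{R/2}(y)$ at different times and through slightly different lattice points, so the one-dimensional projection is not itself a simple random walk on $\Z$. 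The adaptation of Grigor'yan--Telcs consists precisely in absorbing these lattice discrepancies into an $O(1/R)$ error, using the Hölder continuity from \cite[Corollary 5.2]{rz25a} and the Green's function asymptotics \eqref{eq:g2-asymp-unif} to control the differences in the exit distributions of the two walks on $\partial B_{R/2}(y)$.
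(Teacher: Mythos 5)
Your proposal takes a genuinely different route from the paper. The paper obtains the estimate analytically, by iterating the elliptic Harnack inequality of \cite[Corollary 5.2]{rz25a} in the manner of Grigor'yan--Telcs (oscillation decay upgraded to a Lipschitz bound via Green-function control), whereas you reduce to nearest-neighbour increments and argue via a coalescing coupling of two random walks plus a gambler's-ruin estimate. A coupling proof of this type is classical and can be made to work, but as written yours has two real gaps.

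First, the reflection coupling you describe never coalesces on $\Z^2$. The perpendicular bisector of $\{y,y+e\}$ sits at half-integer position in the $e$-direction, so the reflection $\sigma$ preserves $\Z^2$ but has no fixed points there: $X_t$ and $X'_t=\sigma(X_t)$ have $e$-coordinates differing by an odd integer at all times and hence never occupy the same site. Thus $T=\infty$ almost surely and the bound $|u(y)-u(y+e)|\le 2\sup|u|\cdot\P(T>\tau)$ is vacuous. You need a different coupling -- e.g.\ let the $e$-components evolve independently (their difference is then a continuous-time simple random walk on $\Z$ started at $1$, which is recurrent) and switch to synchronous once they agree, or mirror until adjacent and then patch with an auxiliary step.

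Second, even after fixing the coupling, the claimed inclusion ``$\{T>\tau\}$ is contained in the event that the projected walk does not hit $0$ before leaving an interval of length $\Theta(R)$'' is false: $\tau$ is the first exit of the two-dimensional walk from $B(y,R/2)$, which can occur through the direction orthogonal to $e$ long before the $e$-projection has either coalesced or escaped its interval; in that case $\{T>\tau\}$ holds but the gambler's-ruin event does not. One genuinely has to split $\P(T>\tau)$ into an escape piece (handled by gambler's ruin, contributing $O(1/R)$) and a ``did-not-coalesce-in-time'' piece (handled, e.g., by the Gaussian lower tail of $\tau$ and the $1/\sqrt{t}$ bound for a $1$D walk to remain positive), and this second piece does not reduce to gambler's ruin alone. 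Relatedly, the final paragraph invokes \cite[Corollary 5.2]{rz25a} and the Grigor'yan--Telcs mechanism only as a catch-all for ``lattice discrepancies'', but H\"older continuity from a Harnack inequality is strictly weaker than the Lipschitz bound \eqref{eq:harm_grad} and does not supply the missing $O(1/R)$; those references address the analytic route, not the obstructions your coupling faces.
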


We now develop some bounds on the expectation and variance of certain families of Gaussian. We first introduce some notation and specify the Gaussian families of interest. 
Recall the definition of $\cC$ from \eqref{def:C}. For notational convenience—and not to be confused with the sets denoted by $U_x^L$ and $D_x^L$ in \eqref{eq:various_boxes} — we write $(U_x)_{x\in\cC}$ and $(D_x)_{x\in\cC}$ for collections of nonempty subsets of $\tilde{\mathbb{Z}}^{2}$ such that there exist $r \ge r' \ge 1$ with
\begin{equation}\label{eq:cond_harnack}
D_x \subseteq \tB(x,r'), \qquad \tB(x,4r) \subseteq U_x ,\qquad \text{for all }x\in\cC.
\end{equation}
We further define
\begin{equation}\label{def:F}
    \mathbf{F}:=\{f\in (\tilde{\Z}^2)^{\cC}:f(x)\in D_x \text{ for each } x\in \cC\},
\end{equation}
For conciseness, we write $\eta^x_\cdot$ to mean $\eta^{U_x^{\text{c}}}_\cdot$ (see \eqref{eq:field_decomp} for notation)
and consider a Gaussian field of the form
\begin{equation}\label{def:auxilliary_field}
    Z_f=Z_f(\cC,\mu,(D_x)_{x\in\cC},(U_x)_{x\in\cC})\defeq \sum_{x\in \cC} \mu(x)\eta^x_{f(x)},\quad \text{for }f\in \mathbf{F},
\end{equation}
where $\mu$ is a probability measure on $\cC$. The next result controls the supremum norm of $Z_f$ in terms of its variance. The proof follows the argument of \cite[Lemma 4.3]{sznitman_disconnection_2015} but adapted to accomodate our more general definition of $Z_f$.
\begin{lemma}
For all $\cC$ as in \eqref{def:C}, all probability measure $\mu$ on $\cC$ and $(D_x)_{x\in\cC},(U_x)_{x\in\cC}$ satisfying \eqref{eq:cond_harnack}, and all $K\subset \tilde{\Z}^2\setminus \bigcup_{x\in \cC}U_x$, we have that
\begin{equation} \label{eq:borelltis_supnorm}
    \E_N^K\big[\sup_{f\in\mathcal{F}} Z_f\big]\leq \Cl[c]{Z_supnorm}\frac{r^\prime}{r}\sqrt{|\cC|\times \sup_{f\in \mathbf{F}}\E_N^K[Z_f^2]}.
\end{equation}
\end{lemma}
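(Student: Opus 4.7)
The key structural input is that $\mathbf{F}=\prod_{x\in \cC} D_x$ is a product set and $\mu\geq 0$, so the supremum factorizes coordinate-wise,
\[
\sup_{f\in \mathbf{F}} Z_f \;=\; \sum_{x\in \cC} \mu(x)\,\sup_{y\in D_x} \eta^x_y.
\]
By the strong Markov property $z\mapsto \eta^x_z$ is harmonic on $U_x$; since \eqref{eq:cond_harnack} gives $\tilde B(x,4r)\subseteq U_x$ and $D_x\subseteq \tilde B(x,r')$, applying the gradient estimate \eqref{eq:harm_grad} to $\eta^x$ with $R=2r$ and $t=r'/(2r)\in(0,\tfrac12]$ yields, for all $y_1,y_2\in D_x$,
\[
|\eta^x_{y_1}-\eta^x_{y_2}|\;\leq\; \Cr{grad}\tfrac{r'}{r}\, S_x,\qquad S_x\defeq\sup_{z\in \tilde B(x,4r)}|\eta^x_z|.
\]

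Fixing any reference $f_0\in\mathbf{F}$, this yields $\sup_{y\in D_x}\eta^x_y\leq \eta^x_{f_0(x)}+\Cr{grad}\tfrac{r'}{r}S_x$; summing against $\mu$,
\[
\sup_{f\in\mathbf{F}} Z_f \;\leq\; Z_{f_0}+\Cr{grad}\tfrac{r'}{r}\sum_{x\in\cC}\mu(x)\,S_x.
\]
Taking $\P_N^K$-expectation and using $\E_N^K[Z_{f_0}]=0$ since $\eta^x_y$ is centered Gaussian under $\P_N^K$, it remains to establish
\[
\sum_{x\in\cC}\mu(x)\,\E_N^K[S_x] \;\leq\; C\,\sqrt{|\cC|\cdot\sup_{f\in\mathbf F}\E_N^K[Z_f^2]}.
\]
Writing $|\eta^x_z|=\sup_{\epsilon\in\{\pm 1\}}\epsilon\,\eta^x_z$ and applying the same product-sup factorization, the left-hand side equals $\E_N^K\bigl[\sup_{(g,\epsilon)}\tilde Z_{(g,\epsilon)}\bigr]$, where $\tilde Z_{(g,\epsilon)}=\sum_{x\in\cC}\mu(x)\,\epsilon_x\,\eta^x_{g(x)}$ is a centered Gaussian process indexed by $g\in\prod_{x\in\cC}\tilde B(x,4r)$ and $\epsilon\in\{\pm 1\}^\cC$.

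Bounding this Gaussian supremum with the precise $\sqrt{|\cC|}$ prefactor (and without logarithmic corrections) is the main obstacle. The plan is to discretize $\prod_x \tilde B(x,4r)$ down to $c^{|\cC|}$ configurations, controlling the resulting discretization error by a second application of \eqref{eq:harm_grad} on the enlarged ball (which costs only a constant multiplicative factor, rather than the $r'/r$ one that appeared above), and then invoking the standard Gaussian maximal inequality $\E[\max_{k\leq N} G_k]\leq \sqrt{2\log N\cdot\max_k \mathrm{Var}(G_k)}$ with $\log N=O(|\cC|)$ to produce the factor $\sqrt{|\cC|}$. Finally, the mutual $16ML$-separation of the sites in \eqref{def:C}, combined with the nestings $D_x\subseteq \tilde B(x,4r)\subseteq U_x$, ensures that the fields $\eta^x$ are decorrelated enough for $\sup_{(g,\epsilon)} \E_N^K[\tilde Z_{(g,\epsilon)}^2]$ to match $\sup_f \E_N^K[Z_f^2]$ up to a constant; collecting the constants from \eqref{eq:harm_grad} and the Gaussian maximal inequality then delivers the stated value of $\Cr{Z_supnorm}$.
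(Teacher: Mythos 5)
There is a genuine gap, and it occurs precisely at the step you yourself flag as ``the main obstacle''. After the first application of \eqref{eq:harm_grad} you have pushed the problem onto bounding $\sum_x\mu(x)\,\E_N^K[S_x]$ with $S_x=\sup_{z\in \tilde B(x,4r)}|\eta^x_z|$, and you then assert that $\sup_{(g,\epsilon)}\E_N^K[\tilde Z_{(g,\epsilon)}^2]$, with $g$ ranging over the \emph{large} balls $\prod_x\tilde B(x,4r)$, can be matched up to constants by $\sup_{f\in\mathbf F}\E_N^K[Z_f^2]$, where $f$ ranges over the much smaller product $\prod_x D_x$. This is false in the regime where the lemma is actually used (Lemma~\ref{lem:supvar_harm_avg}, with $D_x=\tilde B(x,2L)$ and $U_x=\tilde B(x,ML)$, so that $\tilde B(x,4r)=U_x$). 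Indeed $\mathrm{Var}(\eta^x_z)=g_N^K(z,z)-g_N^{K\cup U_x^c}(z,z)$ grows to order $g_N^K(z,z)\asymp\log N$ as $z\to\partial U_x$, whereas for $z\in D_x$ it stays of order $\log(ML)$-subtracted, which is dramatically smaller for the scales in~\eqref{eq:ML_choice_ub}. So the envelope you would be chaining against is the wrong, much larger, quantity, and no discretization scheme can repair the order of magnitude.

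The idea you are missing is the Harnack step in the paper's proof. The paper does not apply \eqref{eq:harm_grad} pathwise to the signed, random function $z\mapsto\eta^x_z$; it applies it to the deterministic, \emph{nonnegative} harmonic functions $z\mapsto P^K_z(X_{H_{U_x^c}}=u)$ (for each fixed $u\in\partial U_x$), and then invokes a Harnack inequality (\cite[Corollary 5.2]{rz25a}) to replace $\sup_{\tilde B(x,4r)}P^K_\cdot(X_{H_{U_x^c}}=u)$ by a constant times $P^K_{f(x)}(X_{H_{U_x^c}}=u)$ at the point $f(x)\in D_x$. This is what yields the Lipschitz control
\begin{equation*}
\E_N^K\big[(Z_f-Z_k)^2\big]\;\leq\;\frac{C\,\|f-k\|_\infty^2}{r^2}\,\E_N^K[Z_f^2],
\end{equation*}
in which the right-hand side involves $\E_N^K[Z_f^2]$ \emph{at a point of $\mathbf F$}, not a sup over the larger balls. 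Dudley's entropy bound then produces the $\sqrt{|\mathcal C|}$ factor from $\exp\{c|\mathcal C|\}$ covering numbers, which is the effect your discretization heuristic is reaching for. Your exchange of sup and sum in the first line is fine, and the $r'/r$ prefactor is extracted correctly; but because $\eta^x_\cdot$ changes sign, no Harnack-type comparison can be applied to it pathwise, and the approach of splitting off a reference $Z_{f_0}$ and bounding a sup of absolute values is not equivalent to the paper's argument — it genuinely overshoots by the missing Harnack factor.

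A secondary but real point: even granting favourable variance comparisons, the plan still owes a full chaining argument inside each $\tilde B(x,4r)$ (a single discretization plus one application of \eqref{eq:harm_grad} controls only one scale of the residual). The paper packages this entirely via the canonical-metric bound and the Dudley entropy integral; your sketch acknowledges this but does not carry it through.
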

\begin{proof}
    For $f,k\in \mathbf{F},x\in \cC$ and $u\in\partial U_x$, we abbreviate $h^{K}_{f(x)}(u)=P^K_{f(x)}(X_{H_{U_x^{\text{c}}}}=u,H_{U_x^{\text{c}}}<\infty)$. 
    Let $f,k\in \mathbf{F}$, we start by controlling 
    $\E^K_N[(Z_f-Z_k)^2]^{1/2}$,
    the so called ``canonical metric" on $\mathbf{F}$, which is in fact only a pseudometric (cf.~\cite[1.3.1]{noauthor_random_2007}).
    It follows from an easy calculation that,
    \begin{align} \label{eq:canonical_metric_calculation}
    \begin{split}
     &\E^K_N[(Z_f-Z_k)^2]   \\
     = 
     &\sum_{x,x^\prime \in \mathcal{C}}
    \mu(x)\mu(x^\prime) \sum_{u\in \partial U_x,u^\prime \in \partial U_{x^\prime}} 
    \big(h^{K}_{f(x)}(u)-h^K_{k(x)}(u)
    \big)\big(h^{K}_{f(x^\prime)}(u^\prime)-h^K_{k(x^\prime)}(u^\prime)
    \big)\E_N^K[\varphi_u\varphi_{u^\prime}].
    \end{split}
    \end{align}
    By \eqref{eq:cond_harnack}, for all $x\in \cC$, $u\in \partial U_x$, $P^K_{\cdot}(X_{H_{U_x^{\text{c}}}}=u,H_{U_x^{\text{c}}}<\infty)$ is a nonnegative harmonic function in $\tB(x,4r)$. Hence it follows from \cite[Corollary 5.2]{rz25a} and \eqref{eq:harm_grad} that 
    there exists $\Cl[c]{harnack1}<\infty$ such that for all $v,v^\prime \in D_x$,
    \[\big| P^K_{v}\!(X_{H_{U_x^{\text{c}}}}=u,H_{U_x^{\text{c}}}<\infty)-P^K_{v^\prime}\!(X_{H_{U_x^{\text{c}}}}=u,H_{U_x^{\text{c}}}<\infty) \big| \leq \frac{\Cr{harnack1}\Cr{grad}}{r} |v-v^\prime |
    P^K_{v}\!(X_{H_{U_x^{\text{c}}}}=u,H_{U_x^{\text{c}}}<\infty).\]
    Applying the above display to (\ref{eq:canonical_metric_calculation}) we obtain that, with $\|f-k\|_\infty := \sup_{x\in \mathcal{C}} |f(x)-k(x)|$ for $f,k\in\mathbb{F}$,
    \begin{align}\label{eq:bdd_canonical_metric}
    \begin{split}
    \E^K_N[(Z_f-Z_k)^2] 
    &\leq \frac{\Cr{harnack1}\Cr{grad}\|f-k\|_\infty^2}{r^2} \sum_{x,x^\prime \in \mathcal{C}}
    \mu(x)\mu(x^\prime) \sum_{u\in \partial U_x,u^\prime \in \partial U_{x^\prime}}h^{K}_{f(x)}(u)\,
     h^{K}_{f(x^\prime)}(u^\prime)\E_N^K[\varphi_u\varphi_{u^\prime}] \\
    &\leq \frac{\Cr{harnack1}\Cr{grad}\|f-k\|_\infty^2}{r^2} \E_N^K[Z_f^2] .
    \end{split}
    \end{align}
    Now consider the renormalised field $\tilde Z_f=\big(\sup_{f\in \mathbf{F}}\E_N^K[Z_f^2]\big)^{-1/2}Z_f$. By \eqref{def:F}, \eqref{eq:cond_harnack} and \eqref{eq:bdd_canonical_metric},
    \begin{equation}
        \diam(\mathbf{F}) \leq \sup_{f,k\in \mathbf{F}}  \E^K_N[(\tilde Z_f-\tilde Z_k)^2]^{1/2} 
        \leq \sqrt{\Cr{harnack1}\Cr{grad}} \sup_{f,k\in \mathbf{F}} \frac{\|f-k\|_\infty}{r}
        \leq 2\frac{r^\prime }{r} \sqrt{\Cr{harnack1}\Cr{grad}} .
    \end{equation}
    With a view towards applying \cite[Theorem 1.3.3]{noauthor_random_2007}, given an $\epsilon\in (0, 2\frac{r^\prime }{r} \sqrt{\Cr{harnack1}\Cr{grad}}]$, let's proceed to cover $\mathbf{F}$ with balls of radius $\epsilon$ (under the canonical metric). \eqref{eq:bdd_canonical_metric} implies that for any $f,k\in\mathbf{F}$ with $\|f-k\|_\infty \leq \ell \defeq \frac{\epsilon r}{ \sqrt{\Cr{harnack1}\Cr{grad}} }$, the distance between $f,k$, under the canonical metric, is smaller than $\epsilon\sup_{f\in \mathbf{F}}\E_N^K[Z_f^2]^{1/2}$. Hence, it suffices for us to cover $\bigcup_{x\in\cC}D_x$ with euclidean balls of radius $\ell$, which we can do with less than $\big(c^\prime  r^\prime /\ell\big)^{|\cC|}$ balls. Now by \cite[Theorem 1.3.3]{noauthor_random_2007}, we have
    \begin{align*}
    \begin{split}
    \Big|\big(\sup_{f\in \mathbf{F}}\E_N^K[Z_f^2]\big)^{-1/2} \E_N^K[\sup_{f\in \mathcal F}Z_f]\Big|
    &\leq c \sqrt{|\cC|} \int_0^{\frac{r^\prime }{r} \sqrt{\Cr{harnack1}\Cr{grad}}} \sqrt{\log\left(C^\prime \frac{r^\prime}{\epsilon r}\right)} \,d\epsilon \\
    &\leq c \frac{r^\prime}{r}\sqrt{|\cC|} \int_{\log\left(\frac{c^\prime}{\sqrt{\Cr{harnack1}\Cr{grad}} }\right)}^{\infty} t^{1/2} e^{-t} \,dt 
    \leq \frac{cr^\prime}{r}\sqrt{|\cC|} ,
    \end{split}
    \end{align*}
    where we did the change of variable $t=\log(C^\prime \frac{r^\prime}{\epsilon r})$ in the second line.
\end{proof}

Next, we control  $\sup_{f\in\mathbf{F}}\E_N^K[Z_f^2]$ with three specific choices of $(U_x)_{x\in \cC}$ and $(D_x)_{x\in \cC}$, which later feeds into the proofs of Lemma \ref{lem:sup_tail}, Lemma \ref{lem:bad_box} and Lemma \ref{lem:harm_avg}. We start with the ones that are relevant to Lemma \ref{lem:sup_tail} and Lemma \ref{lem:bad_box}.

\begin{lemma} \label{lem:var_conditioning}
\begin{enumerate}[label={(\roman*)}]
\item     Let $L\geq 10\xi$ (cf.~\eqref{def:correlation_length}), $N\geq 2L$ and ${Z_f=Z_f(\{0\},\1_{\{\cdot =0\}},\tB_\xi,\tB_{L/2})}$, then 
\begin{equation} \label{eq:bar_eta_var}
\sup_{f\in\mathbf{F}}\E_N[Z_f^2]\leq c\log(N/L).
\end{equation}
\item   Let $M,L\geq1$ and $N\geq 100ML$. Pick $\cC$ to be such that $d(B(x,ML),B_{L/2})\geq ML$ for all $x\in \cC$. Then for $D_x=\tB(x,ML)$, $U_x=(\tB_{L/2})^{c}$ for all $x\in \cC$ and $\mu=\frac{1}{|\cC|}$, we have that with $Z_f=Z_f(\cC,\mu,(D_x)_{x\in\cC},(U_x)_{x\in\cC})$,
\begin{equation} \label{eq:outside_var}
\sup_{f\in\mathbf{F}}\E_N[Z_f^2]\leq c\frac{\log(N/{ML})^2}{\log(N/L)}.
\end{equation}
\end{enumerate}
\end{lemma}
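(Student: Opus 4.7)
The plan is to reduce both parts to the orthogonal decomposition $\varphi = \eta^K + \psi^K$ from \eqref{eq:field_decomp} under $\P_N$, which gives
\[
\E_N[\eta^K_u\eta^K_v] = g_N(u,v) - g_N^K(u,v) = E_u\bigl[g_N(X_{H_K},v)\1\{H_K<\infty\}\bigr],
\]
the second equality coming from a standard decomposition of $g_N$ at the first entrance time of $K$. This identity reduces both bounds to Green's function and hitting-time estimates at the scales $L$ and $ML$, supplied by \eqref{eq:g2-asymp-unif}, \eqref{eq:bessel_ub}, \eqref{eq:capB} and the last-exit formula \eqref{eq:last_exit}.

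For part (i), with $K := U_0^{\text{c}} = (\tB_{L/2})^{\text{c}}$, one has $Z_f = \eta^K_{f(0)}$, so $\E_N[Z_f^2] = E_y[g_N(X_{H_K},y)\1\{H_K<\infty\}]$ for $y := f(0)\in \tB_\xi$. The exit point $X_{H_K}$ lies on $\partial B_{L/2}$ and, using $L\geq 10\xi$, sits at Euclidean distance at least $L/2-\xi \geq L/4$ from $y$. Monotonicity of $K_0$ together with \eqref{eq:g2-asymp-unif} and \eqref{eq:bessel_ub} then yields $g_N(X_{H_K}, y)\leq cK_0(L/(2N))\leq c\log(N/L)$ for $N\geq 2L$, and bounding the indicator by $1$ gives the claim.

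For part (ii) the conditioning set $K := U_x^{\text{c}} = \tB_{L/2}$ is common to all $x\in\cC$, so $Z_f = |\cC|^{-1}\sum_{x\in\cC}\eta^K_{f(x)}$. Because every covariance $\E_N[\eta^K_u\eta^K_v]$ is nonnegative (as the representation above makes clear), it suffices to bound $\sup_{u,v\in \bigcup_x D_x} \E_N[\eta^K_u\eta^K_v]$. Factoring the identity gives
\[
\E_N[\eta^K_u\eta^K_v]\leq \Bigl(\sup_{z\in\partial B_{L/2}} g_N(z,v)\Bigr)\cdot P_u(H_K<\infty),
\]
and the hypothesis $d(B(x,ML),B_{L/2})\geq ML$ forces $|z-v|\geq ML$, so the first factor is at most $cK_0(2ML/N)\leq c\log(N/(ML))$ by \eqref{eq:g2-asymp-unif}-\eqref{eq:bessel_ub} (using $N\geq 100ML$). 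For the hitting probability, \eqref{eq:last_exit} gives $P_u(H_K<\infty)\leq \sup_{y\in K}g_N(u,y)\cdot \capacity_N(\tB_{L/2})$, whose first factor is again $\leq c\log(N/(ML))$ by the same argument and whose second is $\leq c/\log(N/L)$ by \eqref{eq:capB}. The product is the desired $c\log(N/(ML))^2/\log(N/L)$.

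The computations are otherwise routine; the only point of real care is to harvest a factor of $\log(N/(ML))$ rather than just $\log N$ in each use of \eqref{eq:g2-asymp-unif}-\eqref{eq:bessel_ub}, which relies on the assumed $ML$-separation between the $D_x$ and $\partial B_{L/2}$.
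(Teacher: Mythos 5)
Your proof is correct and follows essentially the same route as the paper's: both reduce the covariance $\E_N[\eta^K_u\eta^K_v]$ to the strong-Markov representation $E_u[g_N(X_{H_K},v),\,H_K<\infty]$, then for (i) bound the Green's function at the exit point using the $L/2-\xi$ separation between $\tB_\xi$ and $\partial B_{L/2}$, and for (ii) factor into a supremum of $g_N$ over entry points times $P_u(H_{\tB_{L/2}}<\infty)$, with the latter controlled via the last-exit formula \eqref{eq:last_exit}, \eqref{eq:g2-asymp-unif}, \eqref{eq:bessel_ub} and \eqref{eq:capB}, exactly as in the paper. Your remark that the covariances are nonnegative (so the sum over $x,x'\in\cC$ can be replaced by the off-diagonal supremum) is a small extra observation that the paper leaves implicit but uses in the same way.
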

\begin{proof}
First note that for both \eqref{eq:bar_eta_var} and \eqref{eq:outside_var} we have that by \cite[(1.49)]{sznitman_topics_2012},
\begin{equation} \label{eq:var_calculation}
    \E_N\left[Z_f^2\right]
    =\frac{1}{|\cC|^2}\sum_{x,x^\prime \in \cC}E_{f(x)}\left[g_N(f(x^\prime),X_{H_{ \partial B_{L/2}}}),H_{\partial B_{L/2}}<\infty\right].
\end{equation}
In the case of \eqref{eq:bar_eta_var}, this imply reduces to,
\begin{equation*} 
   \E_N\left[\big(\eta^0_{f(0)}\big)^2\right] 
   =E_{f(x)}\left[g_N(f(x),X_{H_{\partial B_{L/2}}}),H_{\partial B_{L/2}}<\infty\right]
   \leq c\log(N/L),
\end{equation*}
where the last bound follows from \eqref{eq:bessel_ub}, \eqref{eq:g2-asymp-unif} and the fact that $d(B_\xi,\partial B_{L/2})\geq L-\xi\geq L$. For \eqref{eq:outside_var}, using bound $d(f(x),B_{L/2})\geq ML$, which holds for all $f\in \mathbf{F}, x\in \cC$, and \eqref{eq:var_calculation}, we have that
\begin{align*}
    \E_N\left[Z_f^2\right]
    &\leq \sup_{x,x^\prime \in \cC}\sup_{y \in B_{L/2}}g_N(f(x^\prime),y)P_{f(x)}(H_{B_{L/2}}<\infty)\\
    &\leq  \capacity_N(B_{L/2})\times \sup_{x \in \cC,y \in B_{L/2}}g_N(f(x),y)^2 \leq c\frac{\log(N/{ML})^2}{\log(N/L)},
\end{align*}
where we used \eqref{eq:last_exit}, \eqref{eq:bessel_ub}, \eqref{eq:g2-asymp-unif} and \eqref{eq:capB} in the last line.
\end{proof}

We now present the key ingredient to the proof of Lemma \ref{lem:harm_avg}. The proof of the following Lemma is similar to the one for \cite[(4.16)]{sznitman_disconnection_2015}. However, one needs to take extra care in order to obtain the $(1+\eta)$ prefactor for the variance (cf.~\eqref{eq:supvar_harm_avg}), which is where the constraints in \eqref{eq:harm_avg_cond} enter.
\begin{lemma} \label{lem:supvar_harm_avg}
Let $\eta\in(0,1)$, and $L,M,N\geq 1$ be such that $N\geq 100ML$ and
\begin{equation}\label{eq:harm_avg_cond}
    \log(N/(ML))/\log(N/L)\leq \Cr{nml_eta} \quad \text{and}\quad L/M\leq \Cr{ml_eta}.
\end{equation}
Then for all
$\cC$ as in \eqref{def:C}, we have that
\begin{equation} \label{eq:supvar_harm_avg}
    \sup_{f\in\mathbf{F}}\E_N^{L/2}[Z_f^2]\leq \frac{1+\eta}{\capacity_N(\Sigma(\cC))} ,
\end{equation}
where we picked $D_x=\tB(x,2L)$, $U_x=\tB(x,ML)$ for all $x\in \cC$ and took (cf.~\eqref{def:C_boxes} for notation)
\begin{equation*}
    \mu(x)=\frac{e_{\Sigma(\cC),N}(\bC_x)}{\capacity_N(\Sigma(\cC))}=\frac{\sum_{x^\prime\in \bC_x }e_{\Sigma(\cC),N}(x^\prime )}{\capacity_N(\Sigma(\cC))}.
\end{equation*}
\end{lemma}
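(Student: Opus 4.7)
My plan is to expand $\E_N^{L/2}[Z_f^2]$ as a double sum over $\cC \times \cC$, identify each coefficient, and match the whole quantity to the reciprocal capacity $1/\capacity_N(\Sigma(\cC))$, which is precisely the energy of the normalized equilibrium measure of $\Sigma(\cC)$. I will start by writing $\eta^x_{f(x)} = \sum_{y \in \partial U_x} h_{f(x)}(y)\,\varphi_y$ with $h_{f(x)}(y) := P_{f(x)}(X_{H_{\partial U_x}} = y,\, H_{\partial U_x} < H_{\partial B_{L/2}})$, so that
\begin{equation*}
\E_N^{L/2}[Z_f^2] = \sum_{x,x' \in \cC} \mu(x)\mu(x')\,A_{x,x'},\quad A_{x,x'} := \E_N^{L/2}[\eta^x_{f(x)}\eta^{x'}_{f(x')}].
\end{equation*}

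For the off-diagonal contribution ($x \neq x'$), the sets $U_x = \tB(x,ML)$ and $U_{x'}$ are disjoint since $|x-x'|\geq 16ML$ by \eqref{def:C}. Applying the Markov property (cf.~the paragraph below \eqref{eq:field_decomp}) to $K := \partial U_x \cup \partial U_{x'}$ under $\P_N^{L/2}$ --- and noting $\eta^K_{f(x)} = \eta^x_{f(x)}$, since starting from $f(x) \in D_x \subset \mathrm{int}(U_x)$ the walk first hits $K$ at $\partial U_x$ --- gives $A_{x,x'} = g_N^{L/2}(f(x), f(x')) - g_N^{L/2 \cup K}(f(x), f(x'))$; the second term vanishes because the walk killed on $K \cup \partial B_{L/2}$ cannot travel between the disjoint components $\mathrm{int}(U_x)$ and $\mathrm{int}(U_{x'})$, so $A_{x,x'} = g_N^{L/2}(f(x), f(x'))$. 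I then compare $g_N^{L/2}(f(x),f(x'))$ to $g_N(z,z')$ for $(z,z')\in\bC_x \times \bC_{x'}$ using \eqref{eq:harm_grad} together with the Harnack inequality from \cite[Corollary 5.2]{rz25a}, applied to $u(\cdot) := g_N^{L/2}(f(x),\cdot)$: this $u$ is positive and harmonic on a ball of radius $\sim |x'-f(x)|/4 \gtrsim ML$ around $x'$ (neither the source $f(x)$ nor the killing set $\partial B_{L/2}$ lies inside, by \eqref{eq:A_structure}), which after iterating the ratio estimate in both arguments and using $g_N^{L/2}\leq g_N$ yields $g_N^{L/2}(f(x), f(x')) \leq (1+c/M)^2\,g_N(z,z')$. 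Averaging against $e(z)e(z')/(e(\bC_x)e(\bC_{x'}))$ with $e := e_{\Sigma(\cC),N}$, using $\mu(x) = e(\bC_x)/\capacity_N(\Sigma(\cC))$, and summing over $x \neq x'$ --- the $\bC_x$'s are pairwise disjoint so the resulting double sum extends to a subset of $\Sigma(\cC)\times\Sigma(\cC)$ --- the equilibrium identity $\sum_{z' \in \Sigma(\cC)} e(z')\,g_N(z,z') = 1$ for $z \in \Sigma(\cC)$ bounds the off-diagonal contribution by $(1+c/M)^2/\capacity_N(\Sigma(\cC))$.

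For the diagonal ($x = x'$), the same Markov decomposition with $K = \partial U_x$ gives $A_{x,x} = \sum_{y \in \partial U_x} h_{f(x)}(y)\,g_N^{L/2}(y, f(x)) \leq \sup_{y \in \partial U_x} g_N^{L/2}(y, f(x)) \leq c\log(N/(ML))$, via \eqref{eq:g2-asymp-unif}, \eqref{eq:bessel_ub} and $|y - f(x)| \geq (M-2)L$. Combined with $\max_x \mu(x) \leq \capacity_N(\bC_x)/\capacity_N(\Sigma(\cC)) \leq c/(\log(N/L)\,\capacity_N(\Sigma(\cC)))$ --- from subadditivity of the equilibrium measure and \eqref{eq:capB} --- this produces
\begin{equation*}
\sum_x \mu(x)^2 A_{x,x} \leq \frac{c}{\capacity_N(\Sigma(\cC))}\cdot\frac{\log(N/(ML))}{\log(N/L)} \leq \frac{c\,\Cr{nml_eta}}{\capacity_N(\Sigma(\cC))}
\end{equation*}
upon invoking the first constraint in \eqref{eq:harm_avg_cond}. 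Since $1/M \leq \Cr{ml_eta}/L \leq \Cr{ml_eta}$ by the second constraint, choosing $\Cr{ml_eta}$ and $\Cr{nml_eta}$ small enough as functions of $\eta$ renders $(1+c/M)^2 + c\,\Cr{nml_eta} \leq 1+\eta$, which proves \eqref{eq:supvar_harm_avg}.

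The main obstacle is the \emph{sharpness} of the constant $1+\eta$: a generic Harnack comparison loses an $O(1)$ multiplicative factor in front of $1/\capacity_N(\Sigma(\cC))$, which would be ruinous, so one must exploit the quantitative $(1+O(1/M))$ gradient-ratio estimate on the smaller ball $\bC_{x'}$ --- this is the true role of \eqref{eq:harm_grad} here, as opposed to the mere Harnack inequality used in Section~\ref{sec:harm_avg}'s earlier lemmas. The two constraints in \eqref{eq:harm_avg_cond} are calibrated precisely to make the $O(1/M)$ Harnack error and the diagonal error $\log(N/(ML))/\log(N/L)$ each absorbable into $\eta$.
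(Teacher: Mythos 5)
Your proof is correct and its skeleton matches the paper's: expand $\E_N^{L/2}[Z_f^2]$ as a double sum, identify $A_{x,x'}=g_N^{L/2}(f(x),f(x'))$ for $x\neq x'$ by the Markov property, bound the diagonal using $\mu(x)\leq\capacity_N(\bC_x)/\capacity_N(\Sigma(\cC))$ together with $\sup_{y\in\partial U_x}g_N^{L/2}(y,f(x))\lesssim\log(N/(ML))$, and bound the off-diagonal by trading $g_N^{L/2}(f(x),f(x'))$ for $g_N(z,z')$ with $(z,z')\in\bC_x\times\bC_{x'}$ at a multiplicative cost $(1+o_M(1))^2$, then averaging against equilibrium measure and invoking $\sum_{z'\in\Sigma(\cC)}e_{\Sigma(\cC),N}(z')\,g_N(z,z')=1$. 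Where you and the paper part ways is in the crucial $(1+o_M(1))$ Green's-function comparison: the paper defines $\alpha_{M,L}(x,x')$ as a ratio of $g_N$ evaluated over $D_x\times D_{x'}$ versus $\bC_x\times\bC_{x'}$, and bounds $\alpha_{M,L}\leq(1+\eta)^2$ via the Bessel approximation \eqref{eq:g2-asymp-unif} and the continuity estimate in Lemma~\ref{lem:bessel_cont}; you instead exploit harmonicity of the Green's function around $x'$ and $x$ together with the gradient bound \eqref{eq:harm_grad} and a Harnack inequality. Both routes are sound and both yield the sharp constant, for exactly the reason you flag at the end: the Harnack constant enters only as a prefactor to $O(L/(ML))=O(1/M)$, so it is absorbed rather than ruinous. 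One small remark: you justify the harmonicity of $g_N^{L/2}(f(x),\cdot)$ near $x'$ by invoking the anchoring property \eqref{eq:A_structure}, which is a feature of $\cC\in\A$ rather than of general $\cC$ as in \eqref{def:C}; this reliance can be removed entirely by first using $g_N^{L/2}\leq g_N$ and then running the gradient/Harnack argument for $g_N(f(x),\cdot)$, whose harmonicity on $B(x',ML)$ is guaranteed just by $d(x,x')\geq 16ML$ from \eqref{def:C} and needs no control on the location of $\partial B_{L/2}$.
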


\begin{proof}
A calculation similar to \eqref{eq:var_calculation} yields that for all $x,x^\prime \in \cC$,
\[
\E^{L/2}_N\left[\eta^x_{f(x)} \eta^{x^\prime}_{f(x^\prime)}\right]=E_{f(x)}^{B_{L/2}}\left[g_N^K(f(x^\prime),X_{H_{U_x^c}} ),H_{U_x^c}<\infty\right].
\]
If in addition $x\neq x^\prime \in \cC$ (so $f(x^\prime)\in U_x^{\text{c}}$), it then follows from \cite[(1.49)]{sznitman_topics_2012} that $\E^{L/2}_N\big[\eta^x_{f(x)} \eta^{x^\prime}_{f(x^\prime)}\big]=g^{B_{L/2}}_N(f(x),f(x^\prime))$.
Combining the above two observations, we have that
\begin{equation} \label{eq:sup_var1}
    \E_N^{L/2}[Z_f^2]\leq \sum_{x\in \cC} \mu(x)^2 \sup_{u\in \partial U_x}g_N(f(x),u)
    + \sum_{x,x^\prime\in \cC,x\neq x^\prime} \mu(x)\mu(x^\prime) g_N(f(x),f(x^\prime)).
\end{equation}
Now note that by the choice of $\mu$, \eqref{eq:various_boxes} and \eqref{eq:capB}, we have that for all $x\in \cC$,
\[    \mu(x)\leq  \frac{1}{\capacity_N(\Sigma(\cC))} \sum_{\bar x\in \bC_x} e_{\bC_x,N}(\bar x) =  \frac{\capacity_N(\bC_x)}{\capacity_N(\Sigma(\cC))}\leq \frac{1}{\capacity_N(\Sigma(\cC))}\frac{c}{\log(N/L)}.\]
Hence using the above display,  \eqref{eq:g2-asymp-unif} and \eqref{eq:harm_avg_cond}, one obtains
\begin{align}\label{sup_var_ondiag}
\begin{split}
    \sum_{x\in \cC} \mu(x)^2 \sup_{u\in \partial U_x}g_N(f(x),u) 
    \leq \frac{1}{\capacity_N(\Sigma(\cC))}\times \frac{c\log(N/{ML})}{\log(N/L)} \leq \frac{\eta}{\capacity_N(\Sigma(\cC))}.
\end{split}
\end{align}

Moving on to the second term on the r.h.s of \eqref{eq:sup_var1}. Let 
    \begin{equation*}
        \alpha_{M,L}(x,x^\prime)\defeq \sup_{u\in \bC_x,u^\prime \in \bC_{x^\prime}} \sup_{v\in D_x,v^\prime \in D_{x^\prime} }
        \frac{g_N(v, v^\prime)}{g_N(u,u^\prime)}(\geq 1).
    \end{equation*}
It simply follows from the definition of $\alpha_{M,L}$ that,
\begin{align}\label{sup_var_cross}
\begin{split}
    &\sum_{x,x^\prime\in \cC,x\neq x^\prime} \mu(x)\mu(x^\prime) g_N(f(x),f(x^\prime))\\
    \leq &\frac{\sup_{x\neq x^\prime \in \cC}\alpha_{M,L}(x,x^\prime)}{\capacity_N(\Sigma(\cC))^2}
    \sum_{x,x^\prime\in \cC,x\neq x^\prime}\sum_{ u\in \bC_x,u^\prime\in \bC_{x^\prime}} e_{\Sigma(\cC),N}(u)e_{\Sigma(\cC),N}(v)g_N(u,u^\prime)  \\
    \leq &\frac{\sup_{x\neq x^\prime \in \cC}\alpha_{M,L}(x,x^\prime)}{\capacity_N(\Sigma(\cC))^2}
    \sum_{x,x^\prime\in \Sigma(\cC)} e_{\Sigma(\cC),N}(x)e_{\Sigma(\cC),N}(x^\prime)g_N(x,x^\prime)= \frac{\sup_{x\neq x^\prime \in \cC}\alpha_{M,L}(x,x^\prime)}{\capacity_N(\Sigma(\cC))},
\end{split}
\end{align}
where the last identity follows from
\eqref{eq:last_exit} and the definition of capacity.
It remains to bound $\alpha_{M,L}(x,x^\prime)$ for all $x\neq x^\prime \in \cC$. By \eqref{eq:g2-asymp-unif}, we have that upon taking $N\geq \Cr{NML}(\eta)$ with $\Cr{NML}(\eta)$ large enough,
\begin{align*}
\begin{split}
\alpha_{M,L}(x,x^\prime)&\leq (1+\eta)
    \frac{K_0\left(2\frac{|x-x^\prime|-2L}{N}\right)}{K_0\left(2\frac{|x-x^\prime|+4L}{N}\right)} \leq (1+\eta)^2,
\end{split}
\end{align*}
where the second last inequality holds upon applying Lemma \ref{lem:bessel_cont} with $t=2\frac{|x-x^\prime|+4L}{N}$ and $\zeta=\frac{16L}{N}$ and note that we can always make sure $\zeta/t=\frac{8L}{|x-x^\prime|+2L}\leq \frac{8L}{16ML}\leq \Cr{k0}(\eta)$ by \eqref{def:C} and \eqref{eq:harm_avg_cond}. Thus \eqref{eq:supvar_harm_avg} follows after a change of variable in $\eta$.
\end{proof}

We are now ready to finish the: 
\begin{proof}[Proof of Lemma \ref{lem:sup_tail}, Lemma \ref{lem:bad_box} and Lemma \ref{lem:harm_avg}]
The key is to note that by \eqref{def:F} and \eqref{def:auxilliary_field}
\begin{equation} \label{eq:use_auxiliary_field}
    \bigcap_{x\in \cC}\left\{ \sup_{x^\prime \in D_x} \eta^x_{x^\prime} \geq a \right\}\subset \left\{\sup_{f\in\mathbf{F}} Z_f \geq a\right\}.
\end{equation}

We now show the proof of Lemma \ref{lem:harm_avg}. Consider $L,M,N\geq 1$ satisfying the conditions in Proposition \ref{prop:coarse_graining} and in addition satisfying $\log(N/(ML))/\log(N/L)\leq \Cr{nml_eta}$ and $L/M\leq \Cr{ml_eta}$. Take $\cC^\prime \in \A^{L,M}_{0,N}$ and take $\cC$ to be any subset of $\cC^\prime$ such that $|\cC|\geq \lceil (1-\rho)^2|\cC^\prime| \rceil$.
Pick $\mu$ as in Lemma \ref{lem:supvar_harm_avg}. 
It's easy to see that $(Z_f)_{f\in \mathbf{F}}$, where 
\[Z_f=Z_f(\cC,\mu,(\tB(x,2L))_{x\in\cC},(B(x,ML))_{x\in \cC}),\]
is a centered Gaussian process under $\P^{L/2}_N$. Therefore we can use the Borell-TIS inequality, \eqref{eq:borelltis_supnorm}, \eqref{eq:supvar_harm_avg} and \eqref{eq:use_auxiliary_field} to get,
\begin{align*}
    \P_N^{L/2}(\eta^x \text{ is }\epsilon\text{-bad for all }x\in \cC ) 
    &\leq \P^{L/2}_N\left(\sup_{f\in\mathbf{F}} Z_f \geq (1-\epsilon)^2\ba\right)\\
    &\leq \exp\left\{-\frac{\capacity_N(\Sigma(\cC))}{2(1+\eta)}\left((1-\epsilon)^2\ba - \frac{c}{M}\sqrt{\frac{|\cC|}{\capacity_N(\Sigma(\cC))} }\right)_+^2\right\},
\end{align*}
which concludes the proof of Lemma \ref{lem:harm_avg} upon getting the binomial coefficient from considering all possible choices of $\cC$.
The proofs of Lemma \ref{lem:sup_tail} and Lemma \ref{lem:bad_box} follows from a similar reasoning but by applying Lemma \ref{lem:var_conditioning} instead of Lemma \ref{lem:supvar_harm_avg}.
\end{proof}

\section{Local uniqueness and lower bounds}\label{sec:lb}
We derive a lower bound on $\theta(\ba,N)$ and, as a key ingredient, establish a local estimate for random interlacements. Our proof largely follows \cite[Section 5 and Section 6]{drewitz_critical_2023}, but derivation of the bounds on $\theta(\ba,N)$ is more streamlined here because we have matching bounds on the critical one-arm probability (cf.~\cite[Theorem 1.1]{rz25a}), whereas \cite{drewitz_critical_2023} requires additional work due to the auxiliary function $q(\cdot)$. Moreover, we work with substantially larger separation scales to compensate for the slower decay of correlations (cf.~\eqref{eq:lambda_cond}, \eqref{def:xi_delta}, \eqref{eq:ML_choices}), whereas in~\cite{drewitz_critical_2023} the corresponding scales are taken to be large constants. Finally, extra care is required in our setting when establishing connections, due to the presence of a positive killing measure.

\begin{theorem}\label{prop:lower_bound}
    For all $a\in(-1,1)$ and $N\geq 1$, we have that with $\xi$ as in \eqref{def:correlation_length},
    \begin{equation} \label{eq:prop_lb}
        \theta(\ba,N)\geq \theta(0,\xi)\exp\left\{-c\,\log\big((N/\xi)\vee 2\big)\right\}.
    \end{equation}
    Furthermore, for all $\eta\in(0,1)$, there exists $\Cl[c]{lb}(\eta)<\infty$ such that when $a^2\log(N)\geq \Cr{lb}$,
    \begin{equation} \label{eq:prop_lb_asymp}
    \theta(\ba,N)\geq  \theta(0,\xi)\exp\left\{-\frac{1+\eta}{2} \capacity_{\R^2}^{\tau}\big([0,1]\big)\log(N/\xi)\right\}.
    \end{equation}
\end{theorem}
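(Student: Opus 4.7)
The plan is to construct, with the claimed probability, an explicit event that realises the connection from $0$ to $\partial B_N$ inside $\{\varphi + \bar a \geq 0\}$. After using the symmetry $\theta(\bar a, N) = \theta(-\bar a, N)$ (cf.~the argument around \eqref{eq:apriori_start}), the isomorphism \eqref{eqcouplingintergff} applied with parameter $-\bar a$ reduces matters to exhibiting a chain of $\{\varphi \geq 0\}$ sign clusters that are glued together by interlacement trajectories at level $\bar a^2/2$. The moderate-$a$ regime \eqref{eq:prop_lb} with $a^2 \log N \leq C$ is immediate from \eqref{eq:one-armcrit}, \eqref{eq:crit_isom_bound} (applied with $r = \xi$) and the fact that $\theta(0, \xi) \asymp (\log \xi)^{-1/2}$ in that range; hence the substantive content is \eqref{eq:prop_lb_asymp}, which I now address.

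First, I would pay the ``anchoring'' cost by imposing a sign cluster of $\varphi$ that connects $0$ to some prescribed point $x_0 \in \partial B_\xi$; this event has probability $\gtrsim \theta(0, \xi)$ by \eqref{eq:one-armcrit} applied at scale $\xi$. Next, I link $x_0$ to $\partial B_N$ through a chain of $P := \lfloor N/\xi \rfloor$ consecutive boxes $B_1, \dots, B_P$ of radius $\xi$ centred at equally spaced points along a line segment pointing from $x_0$ toward $\partial B_N$. Applying Proposition~\ref{prop:tube}(ii) to this configuration (with $S_i = B_i$, $\delta$ a small constant) yields
\begin{equation*}
\capacity_N\Big(\textstyle \bigcup_i B_i\Big) \leq (1 + \eta/8) \, \capacity_{\R^2}^\tau\big([0,1]\big).
\end{equation*}
Combining this with \eqref{def:interlacement} and a lower-tail estimate for the Poisson count of trajectories of $\mathcal{I}^{\bar a^2/2}$ visiting each $B_i$ in order, one obtains a forward interlacement trajectory visiting every $B_i$ with probability at least
\begin{equation*}
\exp\Big\{-\tfrac{1 + \eta/4}{2}\, \capacity_{\R^2}^\tau\big([0,1]\big)\, \bar a^2\Big\} = \exp\Big\{-\tfrac{1 + \eta/4}{2}\, \capacity_{\R^2}^\tau\big([0,1]\big)\, \log(N/\xi)\Big\},
\end{equation*}
where I used $\bar a^2 = g_N a^2 = \log(N/\xi)$ (cf.~\eqref{def:correlation_length}). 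Finally, a local uniqueness estimate inside each $B_i$, derived in the style of \cite{drewitz_critical_2023} but tailored to the two-dimensional setup, ensures that with uniformly positive probability the interlacement excursion through $B_i$ is contained in a $\{\varphi \geq 0\}$-cluster that reaches $\partial B_i$ on both the entry and the exit sides; the FKG inequality and the independence of $\varphi$ and $\mathcal{I}^{\bar a^2/2}$ then splice these pieces into a single connected object in $\{\varphi + \bar a \geq 0\}$. After a change of variables in $\eta$, this delivers \eqref{eq:prop_lb_asymp}.

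The main obstacle is the local uniqueness estimate in dimension two. In the three-dimensional setup of \cite{drewitz_critical_2023} it relies on polynomial decay of correlations and constant-order separation scales, whereas here $g_N \asymp \log N$ forces separations that grow polynomially (cf.~the scale $M$ of order $a^2 \log N$ in \eqref{eq:ML_choice_ub}), and the critical one-arm exponent of $1/2$ in \eqref{eq:one-armcrit} is fractional rather than polynomial. Concretely, the plan is to prove that at scale $\xi$ the $\{\varphi \geq 0\}$-cluster touching the inner boundary of $B_i$ coincides with the one touching its outer boundary with probability uniformly bounded below; this is achieved via a second moment argument on the number of crossing clusters, whose first and second moments are both controlled by the matching bounds of \eqref{eq:one-armcrit} (complemented by the near-critical estimate \eqref{eq:crit_isom_bound} to handle the harmonic shift induced by the interlacement). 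The quantitative losses incurred by the logarithmic decay must be tracked carefully and absorbed into the $\eta$-term, which is the delicate point of the whole construction.
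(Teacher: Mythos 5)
The approach you describe has a genuine gap, and it is the central one: the exponential cost $\exp\{-\tfrac{1+\eta}{2}\,\text{cap}_{\R^2}^\tau([0,1])\,\bar a^2\}$ cannot come from asking an interlacement trajectory at level $\bar a^2/2$ to visit a chain of boxes. Under $\bar\P_N$, the probability that $\mathcal I^{\bar a^2/2}$ \emph{misses} a set of capacity $\kappa$ is $e^{-\bar a^2\kappa/2}$, so the probability of hitting it is $1-e^{-\bar a^2\kappa/2}$, which tends to $1$ for large $\bar a^2$; it is not a small-probability event. Combined with the anchoring cost $\theta(0,\xi)$, your construction would produce $\theta(\bar a,N)\gtrsim\theta(0,\xi)$, which directly contradicts the upper bound \eqref{eq:prop_ub}. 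Conversely, if you mean the probability that a \emph{single} forward trajectory threads through all $P\approx N/\xi$ boxes of radius $\xi$ in order, this decays like a product of $P$ transition probabilities, which is far too fast (roughly $e^{-cP}$ with $P=e^{\bar a^2}$), not at the claimed rate. Neither reading yields $e^{-\bar a^2\,\text{cap}/2}$.

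There is also a sign/truncation obstruction. After invoking $\theta(\bar a,N)=\theta(-\bar a,N)$, you are at the negative level $-\bar a$, where the truncation $\text{cap}_N(\mathscr C^{\geq -\bar a})<\infty$ is exactly the condition that singles out clusters \emph{avoiding} $\I^{\bar a^2/2}$ (this is the content of \eqref{eq:apriori_start}): any cluster glued by interlacement trajectories has infinite capacity and contributes zero to $\theta$. So the ``glue sign clusters by interlacement'' event sits in the wrong fiber of the isomorphism. (Working directly at level $+\bar a$ without symmetry does not help: \eqref{eqcouplingintergff} maps $\{\varphi\geq\bar a\}$ to the constraint $\{\sigma^{\bar a}=+1,\ 2\ell^{\bar a^2/2}+\varphi^2\geq 4\bar a^2\}$, which is not simply ``touch the interlacement.'')

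What is missing is the Cameron--Martin change of measure used in the paper: tilt $\varphi$ by $\bar a\,\overline h_L$ where $\overline h_L$ is the harmonic extension of $1$ from a tube $\mathcal L''_L$ along the segment. This carries an explicit entropy cost $\exp\{-\tfrac{\bar a^2}2\,\text{cap}_N^{K}(\mathcal L''_L)\}$ (via the exponential-moment identity behind \eqref{eq:proof2point3}), which, after Lemma~\ref{lem:cap_ll} and Proposition~\ref{prop:tube}(ii), contributes the factor $\exp\{-\tfrac{1+\eta}2\,\text{cap}_{\R^2}^\tau([0,1])\,\bar a^2\}$. Only \emph{after} tilting does the effective level become negative ($-2\eta\bar a$, cf.~\eqref{eq:proof2point6}), at which point the isomorphism and the interlacement local-uniqueness estimate (Theorem~\ref{thm:loc_uniq}) can show the connection occurs with probability bounded below by $1-\eta$. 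Your anchoring $\theta(0,\xi)$ step and the use of Proposition~\ref{prop:tube}(ii) are aligned with the paper, but without the change of measure there is no mechanism that makes the tube capacity appear in the exponent. Related secondary issues: the paper works with boxes of radius $L=N/(a^2\log N)$ (not $\xi$), local uniqueness is proved for interlacements (not by a second-moment argument on GFF crossing clusters), and the separation scale $\lambda$ in \eqref{eq:lambda_cond} must grow mildly with $N/L$ to cope with the two-dimensional (logarithmic) decay of correlations.
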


We start with the regime where $a^2\log(N)$ is assumed to be bounded.
\begin{lemma}[Near-critical lower bound]
For all $a\in(-1,1)$ and $N\geq 1$ such that $a^2\log(N)\leq c$,
\begin{equation}
    \theta(\ba,N)\geq c\,\theta(0,\xi).
\end{equation}
\end{lemma}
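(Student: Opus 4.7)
The plan is to leverage the isomorphism theorem~\eqref{eqcouplingintergff} to reduce the lower bound to the critical one-arm probability of \cite[Theorem 1.1]{rz25a} multiplied by an interlacement cost that is $\Theta(1)$ in the near-critical regime. Using the symmetry $\theta(\bar a,N)=\theta(-\bar a,N)$ recalled above~\eqref{eq:apriori_start}, it suffices to bound $\theta(-\bar a,N)$ from below. In the near-critical regime $a^2\log N\leq c$, we have $\bar a^2=a^2 g_N\leq c'$ and $\log(N/\xi)=a^2 g_N\leq c'$, so $\xi\asymp N$, and by \cite[Theorem 1.1]{rz25a} $\theta(0,\xi)\asymp\theta(0,N)\asymp(\log N)^{-1/2}$; thus the claim reduces to $\theta(-\bar a,N)\geq c\,\theta(0,N)$.

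Applying the isomorphism with $a=\bar a>0$ and reusing the identification of the truncated event used to derive~\eqref{eq:apriori_start} (here with $K=\emptyset$) yields
\[
\theta(-\bar a,N)=\P_N\otimes\bar\P_N\bigl(\exists\,\mathscr{C}\in\K^0(\phi):0\in\mathscr{C},\,\mathscr{C}\cap\partial B_N\neq\emptyset,\,\mathscr{C}\cap\I^{\bar u}=\emptyset\bigr),
\]
with $\bar u=\bar a^2/2$. Writing $\mathscr{C}_0=\mathscr{C}^{\geq 0}(\phi)$ for the $\phi$-cluster of $0$ in $\{\phi\geq 0\}$ and conditioning on $\phi$ (by independence together with~\eqref{def:interlacement}), the right-hand side rewrites as $\E_{\P_N}\bigl[\mathbf{1}_{\mathscr{C}_0\cap\partial B_N\neq\emptyset}\,e^{-\bar u\,\capacity_N(\mathscr{C}_0)}\bigr]$.

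To extract the lower bound I would restrict the expectation to $\{\capacity_N(\mathscr{C}_0)\leq M\}$ for a suitably large constant $M$. On this sub-event the exponential factor is at least $e^{-\bar u M}\geq c>0$ since $\bar u=O(1)$; it then suffices to check that $\P_N\bigl(\mathscr{C}_0\cap\partial B_N\neq\emptyset,\,\capacity_N(\mathscr{C}_0)>M\bigr)\leq\tfrac12\theta(0,N)$ for $M$ large enough. By Markov's inequality this reduces to an $L^1$-estimate $\E_{\P_N}[\capacity_N(\mathscr{C}_0)\mathbf{1}_{\mathscr{C}_0\cap\partial B_N\neq\emptyset}]\leq C\theta(0,N)$, which is plausible because critical clusters reaching $\partial B_N$ under $\P_N$ typically remain within a ball of radius $O(N)$ (the correlation length induced by the killing at rate $N^{-2}$), while $\capacity_N(B_{O(N)})=O(1)$ by~\eqref{eq:capB}. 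Combining these steps, $\theta(-\bar a,N)\geq c\,\theta(0,N)\asymp c'\,\theta(0,\xi)$.

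The main obstacle is the capacity-tail / cluster-extent control for $\mathscr{C}_0$ on the one-arm event. A cleaner alternative that circumvents the $L^1$-bound is to restrict directly to the sub-event $\{\mathscr{C}_0\cap\partial B_N\neq\emptyset\}\cap\{\mathscr{C}_0\subset B_{CN}\}$ for a large constant $C$, on which $\capacity_N(\mathscr{C}_0)\leq\capacity_N(B_{CN})=O(1)$ by~\eqref{eq:capB}; the lower bound on the probability of this sub-event can then be obtained from the two-dimensional local uniqueness estimates announced at the start of Section~\ref{sec:lb}, which are exactly tailored to guarantee macroscopic-but-bounded clusters at scale $\asymp N$.
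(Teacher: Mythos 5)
Your approach has a genuine gap. You correctly reduce the problem (via the isomorphism theorem) to showing that the quantity $\E_{\P_N}\bigl[\mathbf{1}_{\mathscr{C}_0\cap\partial B_N\neq\emptyset}\,e^{-\bar u\,\capacity_N(\mathscr{C}_0)}\bigr]$ is at least $c\,\theta(0,N)$, and then try to restrict the expectation to $\{\capacity_N(\mathscr{C}_0)\leq M\}$. But the control $\P_N\bigl(\mathscr{C}_0\cap\partial B_N\neq \emptyset,\,\capacity_N(\mathscr{C}_0)>M\bigr)\leq \tfrac12\theta(0,N)$ is exactly the missing ingredient, and you acknowledge it is only ``plausible.'' Your alternative route via local uniqueness is also unsupported: Theorem~\ref{thm:loc_uniq} concerns the interlacement under $\bar\P_N^K$ and is not formulated for critical GFF clusters under $\P_N$, so invoking it to bound the containment event for $\mathscr{C}_0$ would itself require a separate derivation.

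The paper avoids both of these difficulties with a much simpler trick: instead of intersecting the one-arm event with a capacity upper bound, it \emph{replaces} the one-arm event by a capacity \emph{window}. Since a cluster with finite capacity at least $\capacity_N(B_N)$ cannot be contained in $B_N$, one has the inclusion
\begin{equation*}
    \{\, \capacity_N(B_N) \leq \capacity_N(\mathscr{C}^{\geqslant \ba}) < \infty \,\}
    \;\subset\;
    \{\, \mathscr{C}^{\geqslant \ba}\cap\partial B_N\neq\emptyset,\ \capacity_N(\mathscr{C}^{\geqslant \ba})<\infty \,\}.
\end{equation*}
The left-hand event is purely a constraint on $\capacity_N(\mathscr{C}^{\geqslant \ba})$, and its probability is lower-bounded directly from the explicit density of $\capacity_N(\mathscr{C}^{\geqslant \ba})$ supplied by \cite[Theorem~3.7]{drewitz_cluster_2022} (essentially $\frac{1}{2\pi\sqrt{g_N}}\,\frac{e^{-\ba^2 x/2}}{x\sqrt{x-g_N^{-1}}}$). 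Restricting the integral to $x\in[c,2c]$ (a window above $\capacity_N(B_N)\asymp 1$) makes the exponential factor $\asymp e^{-c a^2\log N}\geq c'$ in the near-critical regime, and the density integrates to $\asymp (\log N)^{-1/2}$, which matches $\theta(0,\xi)$ since $\xi\asymp N$ here. In short, the paper never needs to estimate the capacity tail on the one-arm event; the inclusion above subsumes the one-arm event into a capacity event, and the exact law of the capacity does the rest.
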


\begin{proof}
    By the same comparison argument as \cite[(4.6)]{rz25a}, we have that,
    \begin{equation*}
        \{ \capacity_N(B_N) \leq \text{cap}_N( \mathscr{C}^{\geqslant a}) < \infty \}
        \subset 
        \{\mathscr{C}^{\geqslant a} \cap \partial B_R \neq \emptyset, \, \text{cap}_N( \mathscr{C}^{\geqslant a}) < \infty \}
        .
    \end{equation*}
    In a similar manner as \cite[(4.5)]{rz25a},
    it follows from \eqref{eq:capB} and \cite[Theorem 3.7]{drewitz_cluster_2022} that the probability of the event in the last display is at least
    \begin{align*}
     \frac{1}{2\pi\sqrt{g_N(0)}}\int_{c}^{2c} \frac{e^{-ca^2\log(N)}}{t\sqrt{t-g_N(0)^{-1}}}\,dt 
     &\geq  \frac{c^\prime}{\sqrt{g_N(0)}}\int_{c}^{2c} \frac{1}{t\sqrt{t-g_N(0)^{-1}}}\,dt \geq \frac{c}{\sqrt{g_N(0)}}.
    \end{align*}
    Now with the inequality $\xi/N=\frac{1}{e^{a^2g_N}}\geq c$, we have that combining \eqref{eq:g2-asymp-unif} and \cite[(1.9)]{rz25a} in the case $h_N=1$,
    \begin{equation*}
        \frac{c}{\sqrt{g_N(0)}}\geq \frac{c}{\sqrt{K_0(N^{-1})}} \geq c\sqrt{\frac{K_0(\xi/N)}{K_0(N^{-1})}}\geq c\,\theta(0,\xi).
    \end{equation*}
\end{proof}

We now state the local uniqueness estimate for random interlacement, which is an essential ingredient in the proof of \eqref{eq:prop_lb_asymp}. Recall the interlacement set introduced around \eqref{def:interlacement}.
Let $\hat{\I}^u$ denote the set of edges in $\Z^2$ traversed entirely by at least one of the trajectories in the support of $\I^u$, and for $x\in \Z^2$ and $R\geq 1$ let $ B_E(z,R)$ be the set of edges of $\Z^2$ whose endpoints are both contained in $B(x,R)$. For $x\in\Z^2$, as well as $u>0,R,\lambda\geq 1$, we introduce the event 
\begin{equation} \label{def:local_uniqueness}
   \LU_{u,R,\lambda}(x)\defeq \bigcap_{z,y\in\I^u\cap B(x,R)} 
   \big\{z \leftrightarrow y \text{ in } \hat{\I}^u\cap B_E(x,\lambda R)\big\}.
\end{equation}

\begin{theorem} \label{thm:loc_uniq}
There exist $\Cl[c]{lu_lambda}<\infty$ such that for all $N\geq u>0, N\geq R\geq 1$, $K\subset \tilde \Z^2$ a compact and connected set and $x\in \Z^2$ with $B(x,2\lambda R)\subset \tilde\Z^2\setminus K$, we have that if
\begin{equation} \label{eq:lambda_cond}
    \lambda=\lambda(N,R) =
    \begin{cases*}
     \Cr{lu_lambda}, & if $2R\geq N$ \\
     \big(\frac{N}{R}e^{-\Cr{lu_lambda}^{-1}\log(N/R)}\big)
     \vee 2, & if $2R<N$
    \end{cases*}
    ,
\end{equation}
then
\begin{equation} \label{eq:loc_uniq}
   \bar \P^{K}_N \left( \LU_{u,R,\lambda}(x)^\text{c}\right) \leq 
    c \exp\left\{ - \left(c^\prime (u\wedge 1)\capacity^K_N(B(x,R)) \right)^{1/3} \right\}.
\end{equation}
\end{theorem}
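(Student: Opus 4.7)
The plan is to leverage the Poissonian decoupling of interlacements together with random-walk potential-theoretic estimates. The basic idea is to split $\mathcal{I}^u$ into two independent layers at levels $u/2$: one serves to build a large connected backbone $\mathcal{S} \subset \hat{\mathcal{I}}^{u/2} \cap B_E(x, \lambda R)$, while the other is used to attach every stray trajectory visiting $B(x,R)$ to this backbone. The probability that $\LU_{u,R,\lambda}(x)$ fails is then bounded by a sum over three failure modes, each controlled by Poisson tail estimates and hitting-probability bounds.

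First, I would cover $B(x, R)$ by a grid of sub-boxes $\{B(y_j, r)\}_{j}$ at an intermediate scale $r < R$, to be tuned. Using \eqref{def:interlacement} together with the capacity lower bound in \eqref{eq:cap-killed} (which applies since $B(x, 2\lambda R) \cap K = \emptyset$), I would show that with high probability every sub-box $B(y_j, r)$ is hit by at least one trajectory of the first copy that stays inside $B(x, \lambda R)$. A union bound over $j$, combined with the independence of distinct trajectories in the Poisson process, gives a first tail estimate. Then, using \eqref{eq:killed_green_smallh} to lower-bound the probability that a walk started in $B(y_j, r)$ makes a wide loop inside $B(x, \lambda R)$, I would show that these visits merge into a single connected cluster $\mathcal{S}$ of $\hat{\mathcal{I}}^{u/2}$.

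For the second layer, for each trajectory $\pi$ of $\mathcal{I}^{u/2}$ entering $B(x, R)$, I would bound the probability that $\pi$ fails to meet $\mathcal{S}$ before exiting $B(x, \lambda R)$. Via \eqref{eq:intensity} and the strong Markov property applied at the first hit of $B(x,R)$, this reduces to controlling, for the killed random walk $X$, the probability of exiting $B(x, \lambda R)$ before hitting $\mathcal{S}$. The choice of $\lambda$ in \eqref{eq:lambda_cond} is precisely what is needed so that $X$ has enough recurrence inside $B(x, \lambda R)$ to hit $\mathcal{S}$ before being killed or escaping: the exponential-in-$\log(N/R)$ factor compensates the logarithmic Green's function of dimension two, ensuring a uniformly positive hitting probability uniformly on starting points inside $B(x, R)$, by means of \eqref{eq:killed_green_smallh} and \eqref{eq:last_exit}.

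The main obstacle, and the source of the $1/3$ exponent in \eqref{eq:loc_uniq}, is calibrating the intermediate scale $r$ so that the three failure modes just described have tail probabilities of the same order. Each of them contributes an exponent roughly of the form $u \cdot \capacity_N^K(B(y_j,r))$ or $u \cdot \capacity_N^K(B(x,R))$ up to entropy factors $\log(R/r)$; the optimisation is where the cube root enters, after balancing three nearly-equal terms. A difficulty specific to the two-dimensional setting is that these capacities are only of order $1/\log$, so this optimisation is delicate and requires the sharp estimates of Lemma~\ref{L:RWestimates}; in particular, the choice of $\lambda$ cannot be a large constant as in the transient setting of \cite{drewitz_critical_2023}, but must take the form \eqref{eq:lambda_cond}.
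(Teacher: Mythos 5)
Your proposal takes a route that differs substantially from the paper's proof, and it contains two genuine gaps. The paper does not build a single backbone from a grid of sub-boxes at an intermediate scale~$r$. Instead it decomposes $\mathcal I^u$ into \emph{four} independent layers at level $u/4$; it controls the number of trajectories and excursions of layers~$1,4$ hitting $B(x,R)$ via Poisson tails and geometric domination of excursion counts (the geometric domination is precisely where \eqref{eq:lambda_cond} is used, to force the escape probability from $\partial B(x,\lambda R)$ back to $B(x,R)$ below $1/2$, cf.~\eqref{eq:geometric_dom}); it augments each excursion by its layer-$2$ cluster; it lower-bounds the capacity of each augmented excursion by $\text{cap}_N^K(B(x,R))/s$ except on an event of probability $\lesssim e^{-c's}$, using the walk-trajectory capacity estimate of \cite[Proposition~5.4]{rz25a}; it then connects \emph{pairs} of excursions through layer~$3$ via Lemma~\ref{lem:il_connect_two}, where $\text{cap}_N^K(B(x,R))\log\lambda \geq c'$ (the second place \eqref{eq:lambda_cond} enters) converts a quadratic-in-capacity exponent into a linear one. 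The cube root in \eqref{eq:loc_uniq} is then obtained by optimising over the threshold $s$ — equating $u\,\text{cap}/s^2$ with $s$ gives $s = (u\,\text{cap})^{1/3}$ — and not, as you suggest, by tuning a sub-box mesh $r$. You have misidentified the mechanism producing the $1/3$.

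The second gap is the backbone connectivity. In two dimensions a trajectory of $\mathcal I^{u/2}$ visiting a sub-box $B(y_j,r)$ does not, by itself, merge with the trajectory visiting an adjacent sub-box, and ``a wide loop inside $B(x,\lambda R)$'' is not a property of a single excursion that you can extract from \eqref{eq:killed_green_smallh}; connecting two distinct interlacement pieces is exactly the kind of two-set connection one must quantify, and this requires a capacity lower bound on each piece together with something in the spirit of Lemma~\ref{lem:il_connect_two}. This is precisely what the paper's use of the auxiliary layer~$2$, \cite[Proposition~5.4]{rz25a}, and the parameter $s$ accomplish, and it is the step your sketch waves away. Without it, your argument does not close, and the intermediate scale $r$ you introduce only adds entropy $(R/r)^2$ without producing the needed capacity control.
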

A sharper version of~\eqref{eq:loc_uniq} could likely be obtained by adapting the arguments of~\cite{prevost_first_2025}, but we do not pursue this here, as \eqref{eq:loc_uniq} is sufficient for our purposes. To illustrate how \eqref{eq:loc_uniq} compares with the usual exponential decay available in dimension $d \ge 3$ (see, e.g.~\cite[Theorem 6.4]{prevost_first_2025} and \cite[Theorem 5.1]{drewitz_critical_2023}), take $N \ge 2$, $u = \log N$, $K = \emptyset$ and $R = N$. Then Theorem~\ref{thm:loc_uniq} together with~\eqref{eq:capB} yields
\[
   \bar \P_N \big( \LU_{u,N,\lambda}(x)^\mathrm{c}\big) 
   \leq c \exp\big\{ -c' (\log N)^{1/3} \big\}
   \;=\; N^{-o(1)} \qquad \text{as } N \to \infty.
\]
Note that it is natural in our setting to take $u \asymp \log N$, since by~\eqref{eq:capB} one has $\capacity_N(\{0\}) \asymp (\log N)^{-1}$, which indicates that $u \asymp \log N$ is precisely the scale at which points start to become ``visible'' to $\I^u$.

We postpone the proof of Theorem \ref{thm:loc_uniq} to the end of this section and first explain the:

\begin{proof}[Proof of \eqref{eq:prop_lb_asymp}]
For the lower bound, we follow the proof of \cite[(6.2)]{drewitz_critical_2023}, but with adaptation to $\Z^2$ and an additional refinement to recover the exact constant in \eqref{eq:thm-asymp}. Let ${h}_{K}(x)=P_x(H_{K}<\infty)$ for $K\subset \tilde \Z^2$. Set, for $\eta\in(0,1)$,
\begin{equation*}
A(K,\ba,N)=\{\, K\leftrightarrow \partial B_N \text{ in } \{x\in \tilde\Z^2\setminus K:\varphi_x \ge \ba(1-\eta - h_K(x))\}\,\},
\end{equation*}
where, the relation $K\leftrightarrow \partial B_N$ in $A$ means that $(K\cup \partial B_N\cup A)\cup \Z^2$ forms a connected subset of $\Z^2$. As in \cite[(6.10)]{drewitz_critical_2023}, the strong Markov property of the Gaussian free field (cf.~around \eqref{eq:field_decomp}) yields, for all $t>0$,
\begin{equation}
\label{eq:proof2point1}
\theta((1-\eta)\ba,N)
\ge \E_N\left[ \1\Big\{\capacity_N(\mathscr{C}^{\geqslant \ba}_{\xi})\ge t\log(N/\xi)^{-1}\Big\} 
\times \P_N^{\mathscr{C}^{\geqslant \ba}_{\xi}}\bigl(A(\mathscr{C}^{\geqslant \ba}_{\xi},\ba,N)\bigr) \right].
\end{equation}
Note that by \cite[(3.8)]{drewitz_cluster_2022}, the inclusion $\{\mathscr{C}^{\geqslant \ba}\nsubseteq \tB_\xi\}\subset \big\{\capacity_N(\mathscr{C}^{\geqslant \ba})\geq \frac{1}{2}\capacity_N\big([0,\xi]\cap \Z)\times \{0\}\big)\big\}$ (which follows from the same reasoning as for \cite[(4.6)]{rz25a}), and \cite[(3.6)]{rz25a} in the case $h_N=1$,
the probability of the event in the indicator in \eqref{eq:proof2point1} is at least,
\begin{align}\label{eq:lb_crit_cost}
\begin{split}
&\P_N\left(\capacity_N(\mathscr{C}^{\geqslant \ba})\ge t\log(N/\xi)^{-1}\right)-\P_N\left(\mathscr{C}^{\geqslant \ba}\nsubseteq \tB_\xi\right)\\
\geq &\P_N\left(c\log(N/\xi)^{-1}\geq\capacity_N(\mathscr{C}^{\geqslant \ba})\ge t\log(N/\xi)^{-1}\right)\\
\geq&\frac{c}{\sqrt{g_N}} \int_{t\log(N/\xi)^{-1}}^{c\log(N/\xi)^{-1}} \frac{e^{-\ba x/2}}{x^{3/2}}\,dx\geq \frac{ce^{-\ba\log(N/\xi)^{-1}}}{\sqrt{g_N}}\sqrt{\log(N/\xi)}\geq c\,\theta(0,\xi),
\end{split}
\end{align}
where we chose $t>0$ small enough such that the second last inequality holds; and used the fact that $\ba\log(N/\xi)^{-1}=1$, \eqref{eq:bessel_ub} and \cite[Theorem 1.1]{rz25a} to justify the last bound.

We now bound the probability appearing on the r.h.s of \eqref{eq:proof2point1}. Let $L,M\geq 1$ be parameters to be chosen later (in particular, they will coincide with the choice in \eqref{eq:ML_choice_ub}). And for $\delta\in(0,1)$ to be fixed later, set
\begin{equation}\label{def:xi_delta}
    \xi_\delta\defeq N{e^{-\delta a^2g_N}} 
\end{equation}
and note that $\xi_1=\xi$.
Consider the sets
\begin{align}
\begin{split}
\label{eq:defLell}
\mathcal{L}_{L} \defeq \bigcup_{i=0}^{\lceil 2N/L\rceil} \widetilde{B}\Bigl((iL/2,0),L\Bigr),
\cL^\prime_L\defeq \tB_{10ML}\cup \cL_L, \text{ and }
\cL^{\prime\prime}_L=\cL^{\prime\prime}_L(\delta)\defeq\cL^\prime_L\setminus \tB_{ \xi_\delta}.
\end{split}
\end{align}
For any compact and connected $K\subset \tB_\xi$, define $\mathbb{P}^K_{\ba,L,\delta}$ as the law of
$(\varphi_x + \ba\,\overline{h}_{L}(x))_{x\in \tilde \Z^2\setminus K}$ under $\mathbb{P}^K_N$, where $\overline{h}_{L}(x)=P_x(H_{\mathcal{L}^{\prime\prime}_{L}} < H_K)$.
Then, as in \cite[(6.17)]{drewitz_critical_2023}, a change of measure via the Cameron-Martin formula yields
\begin{equation}
\label{eq:proof2point3}
\P^K_N\bigl(A(K,\ba,N)\bigr)
\ge 
\P^K_{\ba,L,\delta}\bigl(A(K,\ba,N)\bigr)
\exp\Biggl\{-\frac{\ba^{2}\capacity_N^K(\mathcal{L}^{\prime\prime}_{L}) + 1/e}{2\,\P^K_{\ba,L,\delta}(A(K,\ba,N))}\Biggr\}.
\end{equation}  
From now on take (cf.~\eqref{eq:ML_choice_ub})
\begin{equation}\label{eq:ML_choices}
    L=\frac{N}{a^2\log(N)} \text{ and }M=\frac{a^2\log(N)}{\log(a^2\log(N))^2}.
\end{equation}
It remains to bound the quantities on the r.h.s of \eqref{eq:proof2point3} with $L,M$ as in \eqref{eq:ML_choices}. These results are collected in the following two Lemmas.
\begin{lemma} \label{lem:cap_ll}
There exists $\Cl[c]{delta_dist}\in(0,1)$ such that for all $\eta\in(0,1)$, $a^2\in(0,1)$ and $N\geq 1$ such that $a^2\log(N)\geq \Cr{lb}$, we have that with $L,M$ chosen as in \eqref{eq:ML_choices} and with $\delta=\Cr{delta_dist}$ , 
\begin{equation}\label{eq:cap_ll}
    \sup_K \capacity_N^K(\mathcal{L}^{\prime\prime}_{L}(\delta))\leq 
    (1+\eta) {\capacity_{\R^2}^{\tau}([0,1])},
\end{equation}
where the supremem is over all compact and connected $K\subset \tB_\xi$. 
\end{lemma}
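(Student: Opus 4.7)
My plan is to reduce Lemma~\ref{lem:cap_ll} to a direct application of the capacity upper bound \eqref{eq:tube_ub_macro} of Proposition~\ref{prop:tube}, via two elementary monotonicity reductions. The observation driving everything is that $\cL^{\prime\prime}_L(\delta)$ is contained in a single larger tube of balls along the $x$-axis at a coarser scale, whose capacity is already captured by Proposition~\ref{prop:tube}(ii), essentially regardless of $\delta$.

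\textbf{Steps.} First, I remove the $K$-dependence: since treating $K$ as additional killing can only shrink equilibrium measures pointwise, one has $\capacity_N^K(\cL^{\prime\prime}_L) \leq \capacity_N(\cL^{\prime\prime}_L)$ uniformly in $K$. Next, using $\cL^{\prime\prime}_L \subset \cL_L \cup \tB_{10ML}$ together with the inclusion $\cL_L \cup \tB_{10ML} \subset \cL_{L'}$, where $\cL_{L'}$ is obtained by replacing $L$ with $L' = 10ML$ in \eqref{eq:defLell} (valid because $\tB_{10ML}$ is exactly the $i=0$ ball of $\cL_{L'}$, and each ball of $\cL_L$ fits inside the nearest ball of $\cL_{L'}$ as $L' \geq 3L$ for $M \geq 1$), I obtain $\capacity_N(\cL^{\prime\prime}_L) \leq \capacity_N(\cL_{L'})$. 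Finally I apply Proposition~\ref{prop:tube}(ii) to $\cL_{L'}$, taking $P = \lfloor 2N/L' \rfloor$ and $S_i = \tB(((i-1)L'/2, 0), L')$ for $i \in A = \{1,\dots,P\}$. The floor choice ensures $N/P \geq L'/2$, so the geometric bound $|x_1 - x_2| \leq |i-j| L'/2 + 2L'$ between points in $S_i, S_j$ yields $|x_1 - x_2| \leq (|i-j| + 4) N/P$, and the spacing hypothesis of Proposition~\ref{prop:tube}(ii) holds with its parameter $\delta = 1/5$. Checking also $\cL_{L'} \subset B_{2N}$ (immediate from $L' \leq N$ via \eqref{eq:ML_choices}) and $P \asymp \log(a^2 \log N)^2/5 \geq \Cr{tube_N}(\eta, 1/5)$ (secured by taking $\Cr{lb}(\eta)$ large enough), the proposition produces $\capacity_N(\cL_{L'}) \leq (1+\eta) \capacity_{\R^2}^{\tau}([0,1])$, and \eqref{eq:cap_ll} follows.

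\textbf{The role of $\Cr{delta_dist}$ and main obstacle.} The upper bound constructed above does not depend on $\delta$ at all: the hypothesis $K \subset \tB_\xi$ enters only via the trivial inequality $\capacity_N^K \leq \capacity_N$, and the enlargement $\cL^{\prime\prime}_L \subset \cL_{L'}$ is valid for every $\delta \in (0,1)$. Thus $\Cr{delta_dist}$ can be taken to be any fixed value in $(0,1)$ (say $1/2$) as far as Lemma~\ref{lem:cap_ll} alone is concerned; its operative value will be pinned down by the companion lemma bounding $\P^K_{\ba,L,\delta}\bigl(A(K,\ba,N)\bigr)$ from below in \eqref{eq:proof2point3}. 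The only genuinely delicate point in the plan is threading the parameter constraints of Proposition~\ref{prop:tube}(ii) through the specific scales \eqref{eq:ML_choices}: the geometric distance condition forces the floor choice $P = \lfloor 2N/L' \rfloor$, and the regime condition $P \geq \Cr{tube_N}(\eta, 1/5)$ is precisely what drives the assumption $a^2 \log N \geq \Cr{lb}(\eta)$.
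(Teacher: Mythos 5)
Your first reduction is false, and the error is fatal: killing on $K$ \emph{increases} capacity, not decreases it. From the paper's definition, $\capacity_N^U(A)=\sum_{x\in A}\lambda_x P_x(\widetilde H_A>H_U\text{ or }\widetilde H_A=\infty)$, i.e.~the equilibrium measure of $A$ for the process $X_{\cdot\wedge H_U}$; since $\{\widetilde H_A=\infty\}$ is contained in the escape event for the $U$-killed walk, one has $\capacity_N^U(A)\geq\capacity_N(A)$ for every $U$, the opposite of what you use. So you cannot simply drop the $K$ and invoke $\capacity_N(\cL''_L)$. This is precisely why the lemma statement singles out $\delta=\Cr{delta_dist}$ small: the definition $\cL''_L(\delta)=\cL'_L\setminus\tB_{\xi_\delta}$ keeps the tube away from $\tB_\xi\supset K$, which makes $\sup_{x\in\cL''_L(\delta)}P_x(H_K<\infty)\leq c\,\delta\leq 1/2$ (the paper's \eqref{eq:hitting_K}), and it is \emph{this} hitting estimate that permits the comparison $\capacity_N^K(\cL''_L)\leq\capacity_N(\cL''_L)+2\capacity_N(K)$, with an error that is then shown to be $o(1)$. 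In your plan, by contrast, you declare that $\delta$ plays no role in Lemma~\ref{lem:cap_ll} and enters only through the companion Lemma~\ref{lem:li_connect}; that claim is wrong, because $\delta$ is exactly what controls the gap between $\capacity_N^K$ and $\capacity_N$ here.

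The remainder of your plan (absorbing $\tB_{10ML}$ and $\cL_L$ into a single coarser tube $\cL_{L'}$ with $L'=10ML$ and applying Proposition~\ref{prop:tube}(ii) at $P=\lfloor 2N/L'\rfloor$) is a legitimate alternative to the paper's route, which instead bounds $\capacity_N(\cL_L)$ directly via Proposition~\ref{prop:tube}(ii) and shows separately that $\capacity_N(B_{10ML})$ and $\capacity_N(K)$ are negligible error terms in the scale regime \eqref{eq:ML_choices}. Either variant would close the argument \emph{once} the $\capacity_N^K$ vs $\capacity_N$ step is repaired. As it stands, though, your proof does not go through: you need to reinstate the role of $\delta$ and the hitting-probability bound (or the decomposition of equilibrium measure into surviving and $K$-killed parts, à la Lemma~\ref{lem:killed_survive_il}) before the first displayed inequality of your plan has any chance of being true.
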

\begin{lemma} \label{lem:li_connect}
Let $K\subset \tB_\xi$ be compact and connected. For all $\eta\in(0,1)$, $a^2\in(0,1)$ and $N\geq 1$ such that $a^2\log(N)\geq \Cr{lb}$, we have that with $L,M$ chosen as in \eqref{eq:ML_choices}, $\delta$ chosen as in \eqref{lem:cap_ll},
\begin{equation} \label{eq:li_connect}
    \P^K_{\ba,L,\delta}(A(K,\ba,N))\geq 1-\eta.
\end{equation}
\end{lemma}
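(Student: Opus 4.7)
Following broadly the strategy of \cite[Section~6]{drewitz_critical_2023} but with substantial two-dimensional modifications, the proof would proceed in three steps. First I would translate the connection event on the tilted measure into a problem about the centered field. Set $\psi_x := \varphi_x - \ba\overline{h}_L(x)$, so that under $\P^K_{\ba,L,\delta}$ the field $\psi$ has law $\P^K_N$, and the event $A(K,\ba,N)$ becomes
\[
\{K \longleftrightarrow \partial B_N \ \text{in}\ \{x\in\tilde{\Z}^2\setminus K: \psi_x \geq g(x)\}\}, \quad g(x) := \ba\bigl(1-\eta-h_K(x)-\overline{h}_L(x)\bigr).
\]
The first goal is to show that $g \leq -c_0\ba$ on a ``corridor'' $\mathcal{R} \subseteq \tB_{\xi_\delta} \cup \cL''_L$ linking (a neighborhood of) $\partial K$ to $\partial B_N$, for some $c_0 = c_0(\eta,\delta) > 0$. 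On $\cL''_L$ this is immediate since $\overline{h}_L \equiv 1$ gives $g = -\ba(\eta+h_K) \leq -\ba\eta$. In the transition annulus $\tB_{\xi_\delta}\setminus K$, I would combine a strong Markov decomposition at $H_{\cL''_L}$ with the 2D quasi-recurrence of the killed walk at scales $\ll N$ (from \eqref{eq:g2-asymp-unif} and \eqref{eq:killed_green_smallh}), together with the bound $h_K(y) \leq C\delta$ for $y \in \partial\cL''_L$ (obtained from \eqref{eq:last_exit}, \eqref{eq:capB} and $\text{cap}_N(K) \leq c/\log(N/\xi)$ using $\log(N/\xi_\delta) = \delta\ba^2$) to conclude $h_K + \overline{h}_L \geq 1 - \eta/2$, whence $g \leq -\ba\eta/2$ after further reducing $\Cr{delta_dist}$ if necessary (compatibly with Lemma~\ref{lem:cap_ll}).

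Second, I would reduce to a connectivity statement for random interlacements via the isomorphism theorem. With $c_0$ fixed from Step~1, set $u := (c_0\ba)^2/2 \asymp a^2\log N$ and apply \eqref{eqcouplingintergff} with $U=K$ and $a = c_0\ba$: on a suitable coupling the field $\sigma_x^{c_0\ba}\sqrt{2\ell_x^u + \psi_x^2}$ has the law of $\psi + c_0\ba$, and the sign $\sigma$ equals $+1$ on every sign cluster of $\psi$ intersecting $\I^u$, so $\psi \geq -c_0\ba$ on the closure of such clusters. Since $\psi\equiv 0$ on $K$ and the trajectories underlying $\I^u$ under $\bar\P_N^K$ are killed on $K$, cable-graph continuity of $\psi$ automatically bridges $K$ through $\{\psi \geq -c_0\ba\}$ to any connected component of $\I^u$ that accumulates on $\partial K$. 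It therefore suffices to produce, with probability at least $1-\eta$ under $\P_N^K \otimes \bar\P_N^K$, a connected subset $\Gamma \subseteq \I^u \cap \mathcal{R}$ that touches $\partial K$ and reaches $\partial B_N$.

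Third, I would build $\Gamma$ by a chaining argument based on Theorem~\ref{thm:loc_uniq}. Cover the bulk of $\mathcal{R}$ (the tube and the outer annulus $\tB_{10ML}\setminus\tB_{\xi_\delta}$) by an overlapping chain $(B(z_i,R))_{i=1}^n$ with $R = ML$, $|z_{i+1}-z_i| \leq R$, $z_n$ near $\partial B_N$, and $n = O(N/R) = O(\log^2(a^2\log N))$. By \eqref{eq:capB}, $\capacity_N^K(B(z_i,R)) \gtrsim 1/\log(N/R)$, so $u \cdot \capacity_N^K(B(z_i,R)) \gtrsim a^2\log N/\log\log(a^2\log N) \to \infty$; hence the failure probability in \eqref{eq:loc_uniq} and the non-hitting probability $e^{-u\capacity_N^K}$ from \eqref{def:interlacement} are both $o(1/n)$, and a union bound connects the local interlacement clusters into a chain via the overlaps $B(z_i, R) \cap B(z_{i+1}, R) \neq \emptyset$. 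Near $\partial K$, the innermost balls require smaller radii $R' \ll \xi_\delta - \xi$ so that the hypothesis $B(z,2\lambda R')\cap K = \emptyset$ of Theorem~\ref{thm:loc_uniq} holds; this is arranged via a geometric sequence of scales $r_k \downarrow$ accumulating on $\partial K$, at each of which $u\cdot\capacity_N^K(B(z,r_k))$ remains large enough for \eqref{eq:loc_uniq} to give a usable bound.

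\textbf{Main obstacle.} The principal difficulty is precisely this last multi-scale bridging near $\partial K$: the relaxation length $\lambda R$ given by \eqref{eq:lambda_cond} grows towards $N$ as $R$ approaches $N$, while the clearance $\xi_\delta - \xi = N(e^{-\delta\ba^2}-e^{-\ba^2})$ from $K$ shrinks; one must therefore delicately trade off $R$, $\lambda(R)$ and $\capacity_N^K(B(z,R))$ at each scale, all the while ensuring the chain still accumulates densely enough on $\partial K$ for cable-graph continuity of $\psi$ to close the final gap. A secondary delicate point is the quasi-recurrence estimate $P_x(H_{K\cup \cL''_L}<\infty) \geq 1-C\delta$ in Step~1, which in the 2D setting is not standard and must be derived via \eqref{eq:killed_green_smallh} with constants tracked carefully so that $c_0 = c_0(\eta,\delta) > 0$ survives after fixing $\delta = \Cr{delta_dist}$.
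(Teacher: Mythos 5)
Your Steps 1 and 2 broadly match the paper's reduction (the first inequality in \eqref{eq:proof2point6} and the application of \eqref{eqcouplingintergff}), but Step 3 contains a genuine gap precisely at the point you flag as the ``main obstacle'', and the paper resolves it by a different mechanism, not by the multi-scale chaining you propose.

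The difficulty is not merely a delicate trade-off: your chaining scheme \emph{cannot} terminate on $\partial K$. Theorem~\ref{thm:loc_uniq} requires $B(z,2\lambda R)\subset \tilde\Z^2\setminus K$, so the innermost ball in your chain must always stand off from $K$ at distance $\gtrsim \lambda R$; geometrically shrinking $R$ keeps you in the admissible regime but never closes the gap, and ``accumulating densely on $\partial K$'' leaves a positive-width annulus around $K$ in which you have no control on the sign of $\psi$. Since $\psi\equiv 0$ on $K$ but may well be $<-c_0\ba$ just outside, cable-graph continuity alone does not connect $K$ to your interlacement chain. Moreover, the chain produces a connected piece of $\I^u$ that reaches $\partial B_N$; there is no a priori reason that the same connected component of $\I^u$ contains a trajectory touching $K$.

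The paper sidesteps this by splitting $\I^{2u}=\I^{u,1}\cup\I^{u,2}$ and using \emph{killed-surviving} trajectories of $\I^{u,1}$ (those whose backward leg exits $\tilde\Z^2\setminus K$ through $\partial K$ rather than through the ghost vertex), see the discussion around $W^*_{-,+}$ and Lemma~\ref{lem:killed_survive_il}. These trajectories touch $\partial K$ by construction, so the anchoring at $K$ is automatic and no small-scale local-uniqueness is invoked near $K$. A further ingredient you omit is the event $G_s'$ (a quantitative capacity lower bound for the portion of a killed-surviving trajectory inside one of the balls $B(z_i,L/2)$, obtained via the stochastic domination and \cite[Proposition~5.4]{rz25a}): this is what makes the meeting of $\I^{u,1,L}_{-}$ and $\I^{u,2}$ inside some $B(z_i,L/2)$ likely, and hence lets the independent $\I^{u,2}$ carry the local-uniqueness chain from $B_{3ML}$ out to $\partial B_N$. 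One also needs Lemma~\ref{lem:killed_survive_il} to lower bound the intensity of killed-surviving trajectories reaching $\partial B_{3ML}$ by $c\,\capacity_N(K)$, compensating for the positive killing measure that is specific to the two-dimensional setup; your proposal does not address this at all.

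Finally, a smaller remark on Step 1: the desired bound $h_K+\overline{h}_L\ge 1-\eta$ should be obtained uniformly on $\cL'_L$ (as in \eqref{eq:proof2point6}) from $a^2\log N\ge \Cr{lb}(\eta)$, not by further reducing $\Cr{delta_dist}$; the constant $\Cr{delta_dist}$ is fixed ahead of $\eta$ by Lemma~\ref{lem:cap_ll}, and the $1-C\delta$ bound you write does not beat an arbitrary $1-\eta$ without the additional dependence on $a^2\log N$ being large.
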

Before presenting the proofs of Lemma \ref{lem:cap_ll} and \ref{lem:li_connect} , first note that by combining these lemmas with \eqref{eq:proof2point1}, \eqref{eq:lb_crit_cost} and \eqref{eq:proof2point3}, one obtains
\begin{equation*}
    \theta((1-\eta)\ba,N)\geq c(1-\eta)\theta(0,\xi)
    \exp\left\{-(1+\eta)\frac{\ba^2}{2} \capacity_{\R^2}^{\tau}([0,1])\right\},
\end{equation*}
from which \eqref{eq:prop_lb_asymp} follows using the fact that $\ba^2=\log(N/\xi)$ and a change of variable in $\eta$.
\end{proof}

To complete the proof of Lemma \ref{lem:cap_ll} and \ref{lem:li_connect}, we begin with the following useful result. 

Let $K \subset \tilde \Z^2$ be compact. Recall from~\eqref{def:interlacement} that, under $\bar \P_N^K$, the trajectories contributing to $\I^u$ have their forward and backward parts exit $\tilde \Z^2$ either through the half-open cables attached to each vertex or upon exiting $\tilde \Z^2 \setminus K$. We call a trajectory $w$ \emph{killed-surviving} if its backward part exits $\tilde \Z^2 \setminus K$ (killed) while its forward part exits through one of the half-open cables (survives). In the sequel, we denote by $W^*_{-,+}$ the set of (equivalence classes of) such killed-surviving trajectories.

Note the following result is similar to \cite[Lemma 7.2]{drewitz_critical_2023}, however, the statement is no longer an equality since we are working on a graph with positive killing measure.
\begin{lemma} \label{lem:killed_survive_il}
    Let $c>0$ be arbitrary. For all compact and connected sets $K\subset \tilde\Z^2$ and $K^\prime \subset \tilde\Z^2\setminus K$, where $K^\prime$ is such that, for all $x\in K$, every  continuous path on $\tilde\Z^2$ starting in $x$ and exiting $B(x,cN)$ intersects $K^\prime$.
     Then we have that
    \begin{equation}  \label{eq:killed_survive_il}
        c^\prime \,\capacity_N(K)
        \leq \nu^K_N(W^*_{-,+}(K^\prime))
        \leq \capacity_N(K) ,
    \end{equation}
where $W^*_{-,+}(K^\prime)$ is the set of trajectories modulo time-shift in $W^*_{-,+}$ which hit $K^\prime$.
\end{lemma}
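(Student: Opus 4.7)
The plan is to first establish the disintegration
\[
\nu_N^K(W^*_{-,+}(K')) \;=\; \sum_{x_0\in K} \lambda_{x_0}\, P_{x_0}\!\bigl(\tilde H_K=\infty,\; H_{K'}<\infty\bigr),
\]
via the canonical \emph{last-exit from $K$} parametrization of killed-surviving trajectories: each $w\in W^*_{-,+}$ is uniquely described by its starting point $x_0\in \partial K$ (the endpoint of its backward part inside $K$) together with a forward walk $X$ from $x_0$ that never returns to $K$ and is killed at $\Delta$ in finite time. Concretely, I would couple $\nu_N^K$ to the standard interlacement intensity $\nu_N$ on $\tilde\Z^2$ by restricting each $\nu_N$-trajectory to its excursions in $\tilde\Z^2\setminus K$; each trajectory visiting $K$ contributes exactly one ``type-$(K,\Delta)$'' excursion (namely $W^*_{-,+}$) after its last visit to $K$, and the distribution of this last visit point is $e_{K,N}$. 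This mirrors the disintegration of \cite[Lemma~7.2]{drewitz_critical_2023}, with the difference that in the present two-dimensional setting with killing rate $N^{-2}$ the forward walk dies at $\Delta$ rather than escaping to infinity. The upper bound of \eqref{eq:killed_survive_il} is then immediate: bounding the constraint $H_{K'}<\infty$ by $1$ gives $\nu_N^K(W^*_{-,+}(K'))\leq\sum_{x_0\in K}e_{K,N}(x_0)=\capacity_N(K)$.

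For the lower bound, via the identity above it suffices to establish the pointwise ratio
\[
P_{x_0}\!\bigl(\tilde H_K=\infty,\; H_{K'}<\infty\bigr) \;\geq\; c_1\, P_{x_0}\!\bigl(\tilde H_K=\infty\bigr),\qquad x_0\in K,
\]
with $c_1=c_1(c)>0$; summing with weights $\lambda_{x_0}$ then yields $\nu_N^K(W^*_{-,+}(K'))\geq c_1\capacity_N(K)$. To prove this, I would consider the complementary event $\{\tilde H_K=\infty,\,H_{K'}=\infty\}$ and invoke the geometric hypothesis on $K'$: any continuous path from $x_0\in K$ that exits $B(x_0,cN)$ must intersect $K'$, so on this event the (continuous) trajectory of $X$ must stay in $B(x_0,cN)\setminus K$ until killed at $\Delta$. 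Hence the ratio inequality reduces to the conditional exit estimate
\[
P_{x_0}\!\bigl(X \text{ exits } B(x_0,cN) \text{ before being killed}\;\big|\;\tilde H_K=\infty\bigr) \;\geq\; c_1,
\]
uniformly in $x_0\in K$ and in the set $K$ itself.

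The main obstacle is this last conditional exit estimate: a naive bound on $P_{x_0}(\tilde H_K=\infty,\, H_{K'}=\infty)$ by its absolute value $\leq P_{x_0}(X\text{ stays in }B(x_0,cN))$ is too weak when $K$ is large, because $P_{x_0}(\tilde H_K=\infty)$ may itself be very small. I would approach this via the Doob $h$-transform with $h(\cdot):=P_\cdot(H_K=\infty)$, under which $X$ becomes the original walk conditioned on $\{\tilde H_K=\infty\}$ and is effectively drift-biased away from $K$. Combining a Harnack estimate for $h$ on $B(x_0,cN)\setminus K$ with the killed-Green-function bounds of Lemma~\ref{L:RWestimates} (in particular \eqref{eq:killed_green_smallh}), one controls the $h$-harmonic profile near $\partial B(x_0,cN)$ and shows that the unconditioned exit probability of $X$ from $B(x_0,cN)$ before being killed, which is bounded below by a constant $c_2=c_2(c)>0$ thanks to the natural spatial scale $N$ set by the killing rate $N^{-2}$, survives the $h$-conditioning up to a Harnack factor. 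An equivalent route, which I expect to be cleaner in practice, is to decompose $X$ under $P_{x_0}$ at its first visit to $\partial B(x_0,cN/2)$ via the strong Markov property, and directly compare the ``stays-inside'' and ``exits'' contributions to $P_{x_0}(\tilde H_K=\infty)$ using the explicit form of $g_N^K$ and the asymptotics \eqref{eq:g2-asymp-unif}.
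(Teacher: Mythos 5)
Your disintegration and the resulting lower-bound strategy differ from the paper's, and the latter is crucially simpler. You arrive at the joint-probability form $\nu_N^K(W^*_{-,+}(K'))=\sum_{x_0\in K}\lambda_{x_0}P_{x_0}(\tilde H_K=\infty,\,H_{K'}<\infty)$ and correctly observe that this forces you to prove a conditional hitting estimate $P_{x_0}(H_{K'}<\infty\mid \tilde H_K=\infty)\geq c_1$, uniformly over $x_0\in\partial K$ and over the (arbitrary) compact connected set $K$. The paper instead invokes time-reversal invariance of $\nu_N^K$ together with the calculation of \cite[Lemma~7.2]{drewitz_critical_2023} to write $\nu_N^K(W^*_{-,+}(K'))=\sum_{x'\in\partial K}e_{K,N}(x')\sum_{x\in K'}e_{K',N}(x)\,g_N(x,x')$, which then collapses via \eqref{eq:last_exit} to $\sum_{x'\in\partial K}e_{K,N}(x')\,P_{x'}(H_{K'}<\infty)$ with the \emph{unconditioned} hitting probability. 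In that factored form the upper bound is still immediate, and the lower bound is a one-liner: by the separation hypothesis on $K'$, $P_{x'}(H_{K'}<\infty)\geq P_{x'}(H_{B(x',cN)^c}<N^2)\,P(\tau>N^2)\geq c$ using \cite[(2.51)]{lawler_random_2010} and the exponential tail of the killing time $\tau$, with no Doob $h$-transform, no Harnack estimate for $P_{\cdot}(H_K=\infty)$, and no dependence on the geometry of $K$.

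There is a genuine gap in your proposal: the conditional estimate $P_{x_0}(H_{K'}<\infty\mid\tilde H_K=\infty)\geq c_1$ is not proved, only sketched. The $h$-transform route requires uniform two-sided control of $h=P_{\cdot}(H_K=\infty)$ on $B(x_0,cN)\setminus K$, a domain whose inner boundary is $K$ itself; this is delivered neither by Lemma~\ref{L:RWestimates} (which concerns a fixed ball) nor by the gradient bound \eqref{eq:harm_grad}, and a Harnack inequality for $h$ on this $K$-dependent domain would need a separate argument. Your alternative decomposition at $\partial B(x_0,cN/2)$ likewise does not dispose of the conditioning, which is transported through the strong Markov step. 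You should also reconcile your parametrization with the factored identity the paper obtains from \cite[Lemma~7.2]{drewitz_critical_2023}, since that form is precisely what keeps the proof of this lemma to a few lines.
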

\begin{proof}
Since $\nu_N^K$ is invariant under time reversal, $\nu_N^K(W^*_{-,+}(K^\prime))$ equals to the intensity of backwards surviving, forwards killed trajectories hitting $K^\prime$. By the same calculation as in the proof of \cite[Lemma 7.2]{drewitz_critical_2023}, we obtain that  \begin{align}
    \begin{split}
        \nu^K_N(W^*_{-,+}(K^\prime))  =
   %    & \sum_{x\in \partial K^\prime} e_{K^\prime,N}^K(x)
   %    \, P_x \left(H_K=\infty \mid \tilde{H}_{K^\prime}\geq H_K\right)
   %    \, P_x \left(H_K<\infty\right) \\ =
   %    & \sum_{x\in \partial K^\prime} \lambda_x
   %    \, P_x (\tilde{H}_{K^\prime} =\infty)
   %    \, P_x \left(H_K<\infty\right)  = 
   %     \sum_{x\in\partial  K^\prime}e_{K^\prime,N}(x)
   %    \, P_x \left(H_K<\infty\right) \\ = 
       & \sum_{x^\prime\in\partial  K} e_{K,N}(x^\prime)\sum_{x\in K^\prime}e_{K^\prime,N}(x)  \,  g_N(x,x^\prime)  \stackrel{\eqref{eq:last_exit}}{=}
        \sum_{x^\prime\in\partial  K} e_{K,N}(x^\prime) \,
        P_{x^\prime} \left(H_{K^\prime}<\infty\right). 
    \end{split}
    \end{align}
    The upper bound follows since $P_{x^\prime}  \left(H_K^\prime<\infty\right)\leq 1$. 
    Let $\tau$ be an independent exponential random variable with rate $N^{-2}$, we then have that
    for the lower bound, for all $x^\prime \in K$,
    \begin{align*}
     P_{x^\prime}  \left(H_{K^\prime}<\infty\right)
    &\geq  P_{x^\prime}  
    \left(H_K^\prime<N^2\right)
    P_{x^\prime}  
    \left(
    \tau>N^2
    \right)
    \geq  \inf_{x^\prime \in K} P  
    \left(H_{B(x^\prime, cN)}<N^2\right)e^{-c}
    \geq ce^{-c^\prime} \geq c,  
    \end{align*}
     where the second inequality follows from our condition on $K^\prime$ and the tail of exponential random variable; and the second last inequality follows from \cite[(2.51)]{lawler_random_2010}.
\end{proof}
\begin{proof}[Proof of Lemma \ref{lem:cap_ll}]
First note that for all $x\in \cL^{\prime\prime}_L(\delta)$, we have that by \eqref{eq:last_exit}, \eqref{eq:defLell}, \eqref{eq:capB}, \eqref{eq:g2-asymp-unif} and \eqref{eq:bessel_ub},
\begin{equation}\label{eq:hitting_K}
    P_x(H_K<\infty)\leq c\frac{\log(N/\xi_{\delta})}{\log(N/\xi_1)}\leq c\delta \leq 1/2,
\end{equation}
where the last inequality holds by picking $\delta=\Cr{delta_dist}$ with $\Cr{delta_dist}$ small enough. Following the same calculations as in \cite[(7.7) and (7.8)]{drewitz_critical_2023} and using \eqref{eq:hitting_K} and Lemma \ref{lem:killed_survive_il}  gives
\begin{align}
\begin{split}
    \capacity_N^K(\cL''_L(\Cr{delta_dist}))&\leq \capacity_N(\cL''_L(\Cr{delta_dist}))+2\capacity_N(K) \leq \capacity_N(\cL_L)+\capacity_N(B_{10ML})+2\capacity_N(K)\\
    &\leq \capacity_N(\cL_L)+c\left(\log(N/(ML))^{-1}+\log(N/\xi)^{-1}\right)\\
    &\leq \capacity_N(\cL_L)+c\left(\log(\log(a^2\log(N))^2)^{-1}+(a^2\log(N))^{-1}\right)
    .
\end{split}
\end{align}
To bound $\capacity_N(\cL_L)$, we can apply the upper bound in \ref{prop:tube} with $n=P=\lceil 2N/L\rceil$ therein (note that the condition for \eqref{eq:tube_ub_macro} holds with $\delta$ chosen to be $1/2$ therein), and upon taking $a^2\log(N)\geq \Cr{lb}(\eta)$ with $\Cr{lb}(\eta)$ we get that $\capacity_N(\cL_L)\leq (1+\eta){\capacity_{\R^2}^{\tau}([0,1])}$. Therefore \eqref{eq:cap_ll} follows upon a change of variable in $\eta$.
\end{proof}

\begin{proof}[Proof of Lemma \ref{lem:li_connect}]
We follow the proof of \cite[Lemma 6.4]{drewitz_critical_2023}.
Let $\tau$ be an independent exponential random variable of rate $N^{-2}$. By the definitions of
$\P^K_{\ba,L,\Cr{delta_dist}}$ and $A(K,\ba,N)$, we have that
\begin{equation}
\label{eq:proof2point6}
\begin{aligned}
\P^K_{\ba,L,\Cr{delta_dist}}(A(K,\ba,N))
&\ge \mathbb{P}^K_N\bigl(K \leftrightarrow \partial B_N \text{ in } \{x\in \tilde\Z^2\setminus K : 
\varphi_x \ge \ba(1-\eta - h_{K}(x) - \overline{h}_{L}(x))\}\bigr)
\\
&\ge \mathbb{P}^K_N\bigl(K \leftrightarrow \partial B_N\text{ in } 
\{x\in \mathcal{L}'_{L}\cap \tilde\Z^2\setminus K : \varphi_x \ge -2\eta\ba\}\bigr),
\end{aligned}
\end{equation}
since for all $x\in \mathcal{L}'_{L}$, we have by \eqref{eq:defLell}, \cite[(2.51)]{lawler_random_2010} and the tail of exponential distribution that,
\begin{align*}
 h_{K}(x) + \overline{h}_{L}(x)
&= P_x(H_{K}<\infty) + P_x(H_{\mathcal{L}''_{L}}<H_{K})
=P_x(H_{K\cup \mathcal{L}''_{L}}<\infty)\\
&\ge P_x(H_{B(x,2\xi_{\Cr{delta_dist}})^{c}}<L^2)P(\tau>L^2) \geq (1-ce^{-c^\prime L^2/\xi_{\Cr{delta_dist}}^2}) e^{-L^2/N^2} \\
&\geq (1-ce^{-c^\prime \frac{e^{2\Cr{delta_dist}\ba}}{\ba^2}}) e^{-\frac{1}{\ba^2}} \geq 1-\eta ,
\end{align*}
where the last inequality holds upon taking $a^2\log(N)\geq \Cr{lb}(\eta)$ with $\Cr{lb}(\eta)$ large enough.
It's easy to see that there exists an integer $1\leq p\leq cM$ and a set of vertices $(z_0,z_1,\dots ,z_p)\subset \Z^2$ (which corresponds to the collection of vertices constructed in \cite[(7.14)]{drewitz_critical_2023}) such that,
    \begin{align} \label{eq:defS}
    \begin{split}
      &\begin{array}{r}   
      B(z_0,L/2)\subset B((3ML,0),L), \partial B_{3ML}\subset\bigcup_{i=1}^pB(z_i,L/4),
      \end{array} \\[0.3em] 
    &\begin{array}{r}
        B(z_i,L/4) \cap \partial B_{3ML}\neq \emptyset,
        d(z_i,B_\xi)\geq 2ML  \text{ and } B(z_i,L/2)\subset B(z_{i-1},L) \text{ for all }i\in\{1,\dots,p\}.
    \end{array} \\[0.3em] 
    \end{split}
    \end{align}
We next define a suitable event implementing the desired connection in \eqref{eq:proof2point6}, which will be similar to the one illustrated in \cite[Figure 1]{drewitz_critical_2023}. 
The main difference from the setting of \cite{drewitz_critical_2023} is that the multiplicative scales $\sigma, \sigma'$ used there are replaced by substantially larger ones, and additional care is required for connection between $K$ to $\partial B_{\sqrt{\sigma'}\ell}$ due to the presence of a positive killing measure on our graph.
Let $u=(\eta \ba)^2$ and $\mathcal{P}=\{z_0,\dots ,z_p\}\cup \bigcup_{i=6M}^{\lceil 2N/L \rceil}(iL/2,0) $. By \eqref{eqcouplingintergff}, there exists a coupling $\P^K_N\otimes \bar\P^K_N$ of $\varphi$ under $\P^K_N$ with $\mathcal{I}^{2u}$ under $\bar\P^K_N$ such that $\{ \varphi \geq -2\eta \ba \} \supset \mathcal{I}^{2u}$, and $\I^{2u}$ splits as $\I^{2u}=\I^{u,1}\cup\I^{u,2}$ into two independent interlacement sets at level $u$. 
We denote by $\text{LocUniq}_{u,L}^{(2)}(x)$ the event from \eqref{def:local_uniqueness} with $\lambda = \frac{N}{Le^{\Cr{lu_lambda}^{-1}\log(N/L)}}$, defined for the interlacements $\I^{u,2}$. We define $\I^{u,1}_{-}$ as the set of vertices visited by trajectories of the interlacement associated with $\I^{u,1}$ whose backward parts are killed on $K$. Let $\I^{u,1,L}_{-}$ be the set of vertices in $\I^{u,1}_{-}$ visited before the first exit time from $B_{4ML}$. Consider the (good) event
\begin{equation}
\label{eq:lb15}
G= \Big\{\I^{u,1,L}_{-}\cap\I^{u,2}\cap\bigcup_{i=0}^pB(z_i,L/2)\neq\emptyset\Big\}  \cap \bigcap_{x\in \mathcal{P}} \big( \big\{ \text{LocUniq}^{(2)}_{u,L}(x)\big\} \cap \{\I^{u,2} \cap B(x,L/2) \neq \emptyset\} \big).
\end{equation}
We also define, for all $s\in [0,1]$,
\[
G_s'=\Big\{\exists\,i\in \{0,\dots,p\}:\,\capacity_N^K(\I^{u,1,L}_{-}\cap B(z_i,L/2))\geq s\, \capacity_N^K(B(z_i,L/2))\Big\}.
\]
It then follows from the same reasoning as for \cite[(7.16)]{drewitz_critical_2023} that,
\begin{equation}
\label{eq:lb16}
\P^K_{\ba,L,\Cr{delta_dist}}(A(K,\ba,N))\geq  \bar\P^K_N(G)\geq  \bar\E^K_N\big[ \bar\P^K_N(G\,|\,\I^{u,1})1_{G_s'}\big].
\end{equation}
We will bound $ \bar\P^K_N(G_s')$ for a suitable $s$ and $ \bar\P^K_N(G\,|\,\I^{u,1})$ on $G_s'$ separately from below.

We first derive a lower bound on $ \bar\P^K_N(G_s')$. Whenever $\bigcup_{i=0}^pB(z_i,L/4)\cap\I^{u,1,L}_{-}\neq\emptyset$, let $Z^{u,1}\in \{z_0,\dots,z_p\}$ be the vertex $z\in \{z_0,\dots,z_p\}$ such that $B(z,L/4)$ is visited by the trajectory in $\I^{u,1,L}_{-}$ with smallest label, and let $X^{u,1}$ be the first entrance point of this trajectory into $B(Z^{u,1},L/4)$. Otherwise, i.e.~if $\bigcup_{i=0}^p B(z_i,L/4)\cap\I^{u,1,L}_{-}=\emptyset$, set $X^{u,1}=Z^{u,1}=0$. Conditionally on $Z^{u,1}$ and $X^{u,1}$ and on the event $(Z^{u,1},X^{u,1}) \neq (0,0)$, the set $\I^{u,1,L}_{-}\cap B(Z^{u,1},L/2)$ stochastically dominates $X[0,H_{B(X^{u,1},L/4)^{\text{c}}}]$
under $P_{X^{u,1}}^K$. Hence, by \cite[Proposition 5.4]{rz25a}, we obtain for all $s$ small enough that
\begin{equation}
\label{eq:1stboundonG's}
\begin{split}
    \bar\P^K_N(G_s')
    \geq & \bar\E^K_N\left[
    \begin{array}{l}   
    P_{X^{u,1}}^K\big(\capacity_N^K(X[0,H_{B(X^{u,1},L/4)^{\text{c}}}])\geq s\capacity_N^K(B(z_i,L/2))
    \big)\\
    1\{\exists\,i\in \{0,\dots,p\}:\,B(z_i,L/4)\cap\I^{u,1,L}_{-}\neq\emptyset\}
    \end{array} 
    \right]
    \\ \geq &\big(1-ce^{-c^\prime s^{-1}}\big) \bar\P^K_N\big(\exists\,i\in \{0,\dots,p\}:\,B(z_i,L/4)\cap\I^{u,1,L}_{-}\neq\emptyset\big)
    \\ \geq &(1-\eta) \bar\P^K_N\big(\exists\,i\in \{0,\dots,p\}:\,B(z_i,L/4)\cap\I^{u,1,L}_{-}\neq\emptyset\big), 
\end{split}
\end{equation}
where the last inequality holds upon picking $s=s(\eta)$ small enough.
Let $\I^{u,1}_{-+}\subset \I^{u,1}_{-} (\subset \I^{u,1})$ denote the killed-surviving interlacement set corresponding to $\I^{u,1}$.
If $\I^{u,1}_{-+} \cap \partial B_{3ML} \neq \emptyset$, then by the definition of $\{z_0,\dots,z_p\}$ (see \eqref{eq:defS}), this implies that there exists $z\in \{z_0,\dots,z_p\}$ with $B(z,L/4)\cap \I^{u,1,L}_{-}\neq\emptyset$. Since $ML\leq N$, Lemma \ref{lem:killed_survive_il} implies that the number of trajectories in the process underlying $\I^{u,1}_{-+}$ which hits $\partial B_{3ML}$ is a Poisson random variable with parameter lower bounded by $cu \capacity_N(K)$. Therefore, returning to \eqref{eq:1stboundonG's}, we infer that for some sufficiently small $s_0=s_0(\eta)\in (0,1)$ (fixed henceforth),
\begin{equation}\label{eq:G'shaslargeproba}
     \bar\P^K_N(G_{s_0}')
    \geq(1-\eta)\big(1-e^{-c(\eta\ba)^2\capacity_N(K)}\big),    
\end{equation}
where we used $u=(\eta \ba)^2$ in the last step.

We now bound $ \bar\P^K_N(G\,|\,\I^{u,1})$ on the event $G_s'$, cf.~\eqref{eq:lb16}. Take $\lambda=\lambda(N,L)$ as in \eqref{eq:lambda_cond} and note that by \eqref{eq:ML_choices} and taking $a^2\log(N)>c$ sufficiently large, one has that
\[\lambda L=Ne^{-\Cr{lu_lambda}^{-1}\log(N/L)}=Ne^{-\Cr{lu_lambda}^{-1}\log(a^2\log(N))}\leq\frac{1}{100} \frac{N}{\log(a^2\log(N))^2}\leq\frac{1}{100} ML.\]
Hence in view of this and \eqref{eq:defS}, we have that $B(z,2\lambda L)\subset \tilde\Z^2\setminus K$ for all $z\in \mathcal{P}$. Therefore we may apply Theorem~\ref{thm:loc_uniq} with $R=L$, $\lambda =\lambda (N,L)$ to any $z\in \mathcal{P}$. 
Thus, using the bound $|\mathcal{P}|\leq p+cN/L\leq c \ba$ and Theorem~\ref{thm:loc_uniq} we obtain that
\begin{align*}
\begin{split}
     \bar\P^K_N(G^c\,|\,\I^{u,1})1_{G'_{s_0}}
    &\leq e^{-\frac{us_0}{\log(N/L)}
    }+ c\ba \Big(e^{-(\frac{cu}{\log(N/L)})^{1/3} } + e^{-\frac{cu}{\log(N/L)}}\Big)\\
    &\leq  c\ba e^{-(c\frac{(\eta\ba)^2}{\log(\ba)})^{1/3} } \leq \eta,
\end{split}
\end{align*}
where the last inequality holds upon picking $a^2\log(N)\geq \Cr{lb}(\eta)$ with $\Cr{lb}(\eta)$ large enough.
Combining this with \eqref{eq:lb16}, we obtain \eqref{eq:li_connect} upon a change of variable in $\eta$.

\end{proof}
We will now present the proof of Theorem \ref{thm:loc_uniq}. The following lemma is closely related to \cite[Lemma~4.3]{drewitz_geometry_2023}, but in our setting the impact of the killing on $\Z^2$ is more pronounced; this effect will manifest itself in particular in \eqref{eq:lu_connect_two} and \eqref{eq:killing_ok}.
\begin{lemma} \label{lem:il_connect_two}
Let $K\subset \tilde\Z^2$ be as in Theorem \ref{thm:loc_uniq} and let $\lambda\geq 2$. For all $u>0, N\geq R\geq 1$, $U,V\subset B(x,R)\subset B(x,\lambda R) \subset\tilde \Z^2\setminus K$,
\begin{equation} \label{eq:il_connect_two}
    \bar\P^K_N\left( U\leftrightarrow V \text{ in } \hat{\I}^u \cap B_E(x,\lambda R)\right) 
    \geq 
    1- c\exp\left\{-c^\prime u\,\capacity_{N}^{K}(U)\capacity^{K}_N(V) \log(\lambda)\right\}.
\end{equation}
\end{lemma}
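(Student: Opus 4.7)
The plan is to exploit the Poissonian structure of $\bar{\P}_N^K$: the event $\{U \leftrightarrow V \text{ in } \hat{\I}^u \cap B_E(x, \lambda R)\}$ contains the event that at least one trajectory of $\I^u$ visits $U$, subsequently visits $V$, and has its forward part between these two visits stay inside $B(x, \lambda R)$. Indeed, the edges traversed entirely by such a forward part lie in $B_E(x, \lambda R)$ and supply a path in $\hat{\I}^u \cap B_E(x, \lambda R)$ linking $U$ and $V$. So it suffices to upper bound the probability that no such trajectory exists.

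Using the intensity description \eqref{eq:intensity} of $\nu_N^K$ applied with $U$ as the closed set, the forward part of a trajectory hitting $U$ starts from $y \in U$ distributed according to $e_{U,N}^K$ and subsequently evolves as $X_{\cdot \wedge H_K}$ under $P_y$. Writing
\[ p_y \defeq P_y\bigl(H_V < H_{K \cup B(x, \lambda R)^c}\bigr), \]
the number of trajectories at level $u$ hitting $U$ whose forward parts reach $V$ before exiting $B(x, \lambda R)$ is Poisson with parameter $u \sum_{y \in U} p_y\, e_{U,N}^K(y)$, whence the probability that no such trajectory exists equals $\exp\bigl\{-u \sum_{y \in U} p_y\, e_{U,N}^K(y)\bigr\}$.

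The key step is a uniform lower bound on $p_y$ for $y \in U$. Since the hypothesis $B(x, \lambda R) \subset \tilde{\Z}^2 \setminus K$ forces $K \subset B(x, \lambda R)^c$, by continuity trajectories started in $B(x, \lambda R)$ must exit the ball before they can reach $K$, hence $H_{K \cup B(x, \lambda R)^c} = H_{B(x, \lambda R)^c}$. Applying the last-exit decomposition \eqref{eq:last_exit} to the walk killed upon exiting the ball:
\[ p_y = \sum_{z \in V} g_N^{B(x, \lambda R)^c}(y, z)\, e_{V,N}^{B(x, \lambda R)^c}(z) \geq \Bigl(\inf_{y \in U,\, z \in V} g_N^{B(x, \lambda R)^c}(y, z)\Bigr) \capacity_N^{B(x, \lambda R)^c}(V). \]
Since $y, z \in B(x, R) \subset B(x, \lambda R / 2)$ (using $\lambda \geq 2$), Lemma~\ref{L:RWestimates}(i) applied with $r = \lambda R$ yields $g_N^{B(x, \lambda R)^c}(y, z) \geq c \log \lambda$. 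Moreover, the monotonicity of capacity under enlargement of the killing set, combined with $K \subset B(x, \lambda R)^c$, gives $\capacity_N^{B(x, \lambda R)^c}(V) \geq \capacity_N^K(V)$. Combining, $p_y \geq c \log \lambda \cdot \capacity_N^K(V)$ uniformly in $y \in U$.

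Putting everything together, $u \sum_y p_y e_{U,N}^K(y) \geq c u \log \lambda \cdot \capacity_N^K(U) \capacity_N^K(V)$, and exponentiation delivers \eqref{eq:il_connect_two}. The main subtle point is the observation that trajectories confined to $B(x, \lambda R)$ do not feel the killing on $K$, which in turn allows the Green's function and the capacity of $V$ to be handled with the cleaner killing on $B(x, \lambda R)^c$; once this is in place the argument reduces to a standard Poisson-and-last-exit computation.
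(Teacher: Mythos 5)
Your proof is correct and follows essentially the same route as the paper's: both hinge on the Poisson structure of the trajectories hitting $U$, the observation that a trajectory passing from $U$ to $V$ without leaving $B(x,\lambda R)$ furnishes the required connection in $\hat{\I}^u \cap B_E(x,\lambda R)$, the last-exit decomposition for the walk killed on $B(x,\lambda R)^{\mathrm{c}}$, and the killed Green's function lower bound from Lemma~\ref{L:RWestimates}(i) combined with monotonicity of capacity in the killing set. The one cosmetic difference is that you perform a direct Poisson thinning (the number of ``good'' trajectories hitting $U$ and then $V$ inside the ball is itself Poisson with mean $u\sum_y p_y\,e^K_{U,N}(y)$, so the failure probability is exactly the exponential of minus that mean), whereas the paper first applies a Poisson lower tail estimate to the count $N^u_U$ and then bounds the probability that each of at least $cu\,\capacity_N^K(U)$ trajectories independently misses $V$; your route is marginally cleaner and gives a one-line exponential bound, but both deliver the stated estimate with the same ingredients.
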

\begin{proof}
    Let $N^{u}_U$ be the number of trajectories in the interlacement set that enters $U$. Note $N^{u}_U$ is $\text{Poisson}(u\capacity_{N}^{K}(U))$-distributed by definition. By a standard large deviation estimate, there exists $c>0, c^\prime <\infty$ such that uniformly in $u$,
    \begin{equation} \label{eq:poiss_ldp}
        \bar\P^K_N(N^{u}_U<cu\capacity_{N}^{K}(U)) \leq c^\prime e^{-cu\capacity_{N}^{K}(U)}.
    \end{equation}
    For the event on the l.h.s of \eqref{eq:il_connect_two} to not happen, all of the $N^{u}_U$ trajectories must not hit $V$ before leaving $B(x,\lambda R)$. Hence the probability of the l.h.s of \eqref{eq:il_connect_two} is at least
    \begin{equation*}
        1- \bar\P^K_N\left( N^{u}_U<cu\capacity_{N}^{K}(U) \right)
        -P^{K}_{\bar{e}^K_{U,N}}\left(H_V>H_{B(x,\lambda  R)^c}\right)^{cu\capacity_{N}^{K}(U)}. 
    \end{equation*}
    To control the last term above, note that for all $y\in B(x,R)$, we have
    \begin{equation*}
    P^{B(x,\lambda  R)^c}_{y}\left(H_V<\infty\right) \geq \capacity^{B(x,\lambda  R)^c}_N(V)\inf_{z\in V}g_N^{B(x,\lambda  R)^c}(y,z)\geq c\,\capacity^K_N(V) \log(\lambda), 
    \end{equation*}
    where we used \eqref{eq:last_exit} in the first inequality and \eqref{eq:killed_green_smallh} in the last inequality. Combining the above three displays and the inequality $1-x\leq e^{-x}$ for all $x\in \R$, the claim follows.
\end{proof}
    
\begin{proof}[Proof of Theorem \ref{thm:loc_uniq}]
Let $x \in \Z^2$ and abbreviate $B^x = B^x(x,R)$, $B^x_{\lambda}=B(x,\lambda R)$ and $B^x_{E,\lambda}=B_E(x,\lambda R)$. 
Note that we may assume w.l.o.g that $u,R$ is large enough so that $(u\wedge 1)\capacity_N^K(B^x) \ge 1$. For $u>0$, we decompose $\mathcal{I}^u = \mathcal{I}_1^{u/4} \cup \mathcal{I}_2^{u/4} \cup \mathcal{I}_3^{u/4} \cup \mathcal{I}_4^{u/4}$, where $\mathcal{I}_k^{u/4}$, $k\in\{1,2,3,4\}$, are independent interlacement sets at level $u/4$. Similarly, define $\widehat{\mathcal{I}}_k^{u/4}$ from $\mathcal{I}_k^{u/4}$ in the same way that $\widehat{\mathcal{I}}^u$ is obtained from $\mathcal{I}^u$, so that $\widehat{\mathcal{I}}^u = \widehat{\mathcal{I}}_1^{u/4} \cup \widehat{\mathcal{I}}_2^{u/4} \cup \widehat{\mathcal{I}}_3^{u/4} \cup \widehat{\mathcal{I}}_4^{u/4}$. For $k\in\{1,2,3,4\}$, denote by $Z_1^k,\dots,Z_{N_k}^k$ the (equivalence classes of) trajectories in the Poisson point process $\mathcal{I}_k^{u/4}$ that hit $B^x$, and for each $i\in\{1,\dots,N_k\}$ decompose $Z_i^k$ canonically into its ($M_i^k$ many) excursions $Z_{i,1}^k,\dots,Z_{i,M_i^k}^k$, each started when entering $B^x$ and ending upon exiting $B^x_{\lambda}$.

We start with some estimates on the number of $N_k$ and $M_i^k$ for $k\in \{1,2,3,4\},i\in\{1,\dots,N_k\}$. It follows from the same reasoning as \eqref{eq:poiss_ldp} that
\begin{equation} \label{eq:nk_control}
\bar\P^K_N\left( N_k \geq cu\capacity_N^K(B^x) \right) \leq e^{-c^\prime u\capacity_N^K(B^x)}.
\end{equation}
We will now show that $M^k_i$ can be stochastically dominated by an independent geometric random variable. By \eqref{eq:last_exit}, \eqref{eq:g2-asymp-unif} and \eqref{eq:capB},
\begin{align}\label{eq:geometric_dom}
\begin{split}
   \sup_{y\in\partial B(x,\lambda R)} P^K_y(H_{B(x,R)}<\infty) &\leq \sup_{y\in\partial B(x,\lambda R)} P_y(H_{B(x,R)}<\infty)\\
   &\leq c\frac{K_0(\lambda R/N)}{\log((N/R)\vee 2)} \leq  c   
   \begin{cases*}
     K_0(\lambda 2), & if $2R\geq N$ \\
     \frac{\log(\frac{N}{\lambda R})}{\log(N/R)}, & if $2R<N$
    \end{cases*}
    \leq\frac{1}{2}.
\end{split}
\end{align}
where the second line follows from the fact that $K_0$ is decreasing, \cite[(B.1)]{rz25a}, \eqref{eq:lambda_cond} upon choosing $\Cr{lu_lambda}$ large enough so that the last bound is true.
Hence using the tail probability of geometric random variables and a union bound we have that for $k\in\{1,2,3,4\}$,
    \begin{equation} \label{eq:nk_mki_control}
        \bar\P_N^K\left(N_k\leq cu\capacity_N^K(B^x), \exists 1\leq i\leq N_k, M^k_i\geq cu\capacity_N^K(B^x)\right) \leq cu\capacity_N^K(B^x)e^{-c^\prime u\capacity_N^K(B^x)}.
    \end{equation}

    We now define an event which controls the number of excursions from each interlacement set. For each $k\in\{1,2,3,4\}$, let $A^u_{R,k}=\{N_k\leq cu\capacity_N^K(B^x), \forall i\leq N_k, M^k_i\leq cu\capacity_N^K(B^x)\}$ and let $A^u_R=\bigcap_{k=1}^4A^u_{R,k}$. By a union bound, (\ref{eq:nk_control}) and (\ref{eq:nk_mki_control}), it follows that
    \begin{equation} \label{eq:lu_aur}
        \bar\P^K_N\left((A^u_R)^{\text{c}}\right) \leq cu\capacity_N^K(B^x)e^{-c^\prime u\capacity_N^K(B^x)}.
    \end{equation}

    Now we proceed to rephrase the local uniqueness event to a connection event of two suitable sets comprised of interlacement trajectories, which will lead us to the regime of Lemma \ref{lem:il_connect_two}.
    Let us define the sets $\hC_{i,j}^{m,n}$ as consisting of the vertices $z$ visited by $Z_{i,j}^m$, as well as the vertices $y\in B^x_{\lambda}$ connected to such $x$ by a path of edges in $B^x_{E,\lambda}\cap \widehat{\mathcal{I}}_n^{u/4}$. In particular, if $z\in \mathcal{I}^u\cap B^x$, then $z$ belongs to $\hC_{i,j}^{m,n}$ for some $m\in\{1,2,3,4\}$, $i\in\{1,\dots,N_m\}$ and $j\in\{1,\dots,M_i^m\}$, and any $n\in\{1,2,3,4\}$. 
    It then follows from the same reasoning as the one above \cite[(5.16)]{drewitz_critical_2023} that,
    \begin{align} \label{eq:lu_to_connect}
    \begin{split}
        \bar\P^K_N& \left(\LU_{u,R,\lambda}(x)^{\text{c}},A^u_R\right) \leq 16\times (cu\capacity_N^K(B^x))^4 \times \\
        &\sup_{m=1,4}\bar \E^K_N\left[\sup_{i_1,i_2,j_1,j_2}
        \bar\P^K_N\left(\hC^{1,2}_{i_1,j_1}\nleftrightarrow\hC^{m,2}_{i_2,j_2}\text{ in }\hat{\I}^{u/4}_3\cap B^x_{E,\lambda}\mid \mathcal{A}\right)1_{A^u_{R,1}\cap A^u_{R,m}}\right],
    \end{split}
    \end{align}
    where the indices $i_1,i_2,j_1,j_2$ are all $\mathcal{A}$-measurable, $i_1$ ranges over $\{1,\dots N_1\}$, $i_2$ over $\{1,\dots N_m\}$, $j_1 \in \{ 1,\dots, M_1^{i_1}\}$ and $j_2 \in \{ 1,\dots, M_m^{i_2}\}$ and 
    $\mathcal{A}$ is the $\sigma-$algebra generated by the point process underlying $\I^{u/4}_1,\I^{u/4}_2$ and $\I^{u/4}_4$. 
    Moreover, for some $s>0$ soon to be fixed, on the event, 
    \begin{equation}
        \left\{\capacity_N^K(\hC^{1,2}_{i_1,j_1})\geq  \frac{c}{s} \capacity_N^K(B^x)\right\} \cap 
        \left\{\capacity_N^K(\hC^{m,2}_{i_2,j_2})\geq \frac{c}{s} \capacity_N^K(B^x)\right\},
    \end{equation}
    it follows from \eqref{eq:il_connect_two} that,
    \begin{equation}\label{eq:lu_connect_two}
    \bar\P^K_N\left(\hC^{1,2}_{i_1,j_1}\nleftrightarrow\hC^{m,2}_{i_2,j_2}\text{ in }\hat{\I}^{u/4}_3\cap B^x_{E,\lambda}\mid \mathcal{A}\right) 
    \leq c\exp\left\{-c^\prime \frac{u \capacity_N^K(B^x)^2 }{s^2\log(\lambda)^{-1}}\right\} \leq  c\exp\left\{-c^\prime \frac{u \capacity_N^K(B^x)}{s^2}\right\},
    \end{equation}
    where the last inequality in \eqref{eq:lu_connect_two} holds true since by \eqref{eq:capB} and \eqref{eq:lambda_cond},
    \begin{equation}\label{eq:killing_ok}
        \capacity_N^K(B^x)\log(\lambda)
        \geq\capacity_N(B^x)\log(\lambda)
        \geq\frac{\log(\lambda)}{\log((N/R)\vee 2)}\geq c^\prime.
    \end{equation}
    Additionally, it follows from a union bound and \cite[Proposition 5.4]{rz25a} that for all $2\leq s\leq (\lambda -1)R/2$,
    \begin{equation} \label{eq:lu_bd_cap}
        \bar\P^K_N\left(A^u_{R,k},\exists i\leq N_k,j\leq M^i_k, 
        \capacity_N^K(\hC^{k,2}_{i,j})\leq \frac{c}{s} \capacity_N^K(B^x)\right) \leq (cu\capacity_N^K(B^x))^2 \exp\{-c^\prime s\},
    \end{equation}
    where, the application of \cite[Proposition 5.4]{rz25a} is justified since conditionally on the starting point of the respective excursion, the random set $\hC_{i,j}^{k,2}$ stochastically dominates $X[0,H_{B(z,(\lambda -1)R)}]$ (under $P_z \otimes \bar\P^K_N$) for a certain vertex $z \in \Z^2$. To conclude, combining \eqref{eq:lu_aur}, \eqref{eq:lu_to_connect}, \eqref{eq:lu_connect_two} and \eqref{eq:lu_bd_cap}, we have that
    \begin{align*}
    \begin{split}
        \bar\P^K_N \left(\LU_{u,R,\lambda}(x)^{\text{c}}\right)
        \leq  cu&\capacity_N^K(B^x)e^{-c^\prime u\capacity_N^K(B^x)} \\
        + &c(u\capacity_N^K(B^x))^4\times 
        \left( e^{-c^\prime u\capacity_N^K(B^x)s^{-2}}+ (u\capacity_N^K(B^x))^2 e^{-c^\prime s}\right),
    \end{split}
    \end{align*}
    from which \eqref{eq:loc_uniq} follows by picking $s=(u\capacity_N^K(B^x))^{1/3}$. In particular note that such $s$ is in the regime of \cite[Proposition 5.4]{rz25a} as long as $u<cN$ since $(\lambda -1)R/2\geq c\sqrt{N}$ and by \eqref{eq:cap-killed}
    \[ s\leq (u\capacity_N^{B(x,\lambda R)}(B^x))^{1/3} \leq (cu\log(\lambda))^{1/3} \leq (cu\log(N))^{1/3}.\]
\end{proof}

\noindent\textbf{Acknowledgements.} The research of WZ is supported by the CDT in Mathematics of Random Systems and by the  President's PhD Scholarship scheme at Imperial College, London. The research of PFR is supported by the European Research Council (ERC) under the European Union’s Horizon Europe research and innovation program (grant agreement No 101171046).

\bibliography{bibliography}
\bibliographystyle{abbrv}

\end{document}